\titleformat{\subsection}[runin]
{\bfseries} {\thesubsection{.}}{0.15cm}{}[.]
\titleformat{\subsubsection}[runin]
{\em}{\thesubsubsection{.}}{0.15cm}{}[.]
\newtheorem{theorem}{Theorem}[section]
\newtheorem{proposition}[theorem]{Proposition}
\newtheorem{lemma}[theorem]{Lemma}
\newtheorem{corollary}[theorem]{Corollary}
\theoremstyle{definition}
\newtheorem{definition}[theorem]{Definition}
\newtheorem{remark}[theorem]{Remark}
\newtheorem{notation}[theorem]{Notation}
\numberwithin{equation}{section}
\numberwithin{figure}{section}
\newcommand\Acal{\mathcal{A}}
\newcommand\Kcal{\mathcal{K}}
\newcommand\Mcal{\mathcal{M}}
\newcommand\Ncal{\mathcal{N}}
\newcommand\Ocal{\mathcal{O}}
\newcommand\Pcal{\mathcal{P}}
\newcommand\Vcal{\mathcal{V}}
\newcommand\Ascr{\mathscr{A}}
\newcommand\Cscr{\mathscr{C}}
\newcommand\Oscr{\mathscr{O}}
\def\c{\mathbb{C}}
\newcommand\cp{\mathbb{CP}}
\def\n{\mathbb{N}}
\renewcommand\r{\mathbb{R}}
\newcommand\z{\mathbb{Z}}
\renewcommand\k{\mathbb{K}}
\newcommand\igot{\mathfrak{i}}
\renewcommand\igot{\mathfrak{i}}
\newcommand\pgot{\mathfrak{p}}
\newcommand\qgot{\mathfrak{q}}
\newcommand\Agot{\mathfrak{A}}
\newcommand\Sgot{\mathfrak{S}}
\newcommand\Zgot{\mathfrak{Z}}
\renewcommand\imath{\igot}
\newcommand\Psf{\mathsf{P}}
\newcommand\wt{\widetilde}
\newcommand\wh{\widehat}
\newcommand\di{\partial}
\newcommand\dist{\mathrm{dist}}
\renewcommand\span{\mathrm{span}}
\newcommand\length{\mathrm{length}}
\newcommand\Flux{\mathrm{Flux}}
\newcommand\supp{\mathrm{supp}}
\newcommand\Qcal{\mathcal{Q}}
\begin{document}

\fancyhead[LO]{Interpolation by minimal surfaces and directed holomorphic curves}
\fancyhead[RE]{A.\ Alarc\'on and I.\ Castro-Infantes}
\fancyhead[RO,LE]{\thepage}

\thispagestyle{empty}

\vspace*{7mm}
\begin{center}
{\bf \LARGE Interpolation by conformal minimal surfaces and directed holomorphic curves}

\vspace*{5mm}

{\large\bf Antonio Alarc\'on \; and \; Ildefonso Castro-Infantes}

\vspace*{3mm}

{\large\em Dedicated to Franc Forstneri\v c on the occasion of his sixtieth birthday}
\end{center}

\vspace*{7mm}

\begin{quote}
{\small
\noindent {\bf Abstract} \hspace*{0.1cm}
Let $M$ be an open Riemann surface and $n\ge 3$ be an integer. We prove that on any closed discrete subset of $M$ one can prescribe the values of a conformal minimal immersion $M\to\r^n$. Our result also ensures jet-interpolation of given finite order, and hence, in particular, one may in addition prescribe the values of the generalized Gauss map. Furthermore, the interpolating immersions can be chosen to be complete, proper into $\r^n$ if the prescription of values is proper, and injective if $n\ge 5$ and the prescription of values is injective. We may also prescribe the flux map of the examples.

We also show analogous results for a large family of directed holomorphic immersions $M\to\c^n$, including null curves.

\medskip

\noindent{\bf Keywords} \hspace*{0.1cm} 
minimal surface, directed holomorphic curve, 
Weierstrass theorem, Riemann surface, Oka manifold.

\medskip

\noindent{\bf MSC (2010)} \hspace*{0.1cm} 
53A10, 32E30, 32H02, 53A05}
\end{quote}


\section{Introduction and main results}\label{sec:intro}

The theory of interpolation by holomorphic functions is a central topic in Complex Analysis which began in the 19th century with the celebrated Weierstrass Interpolation Theorem: 
{\em on a closed discrete subset of a domain $D\subset\c$, one can prescribe the values of a holomorphic function $D\to\c$} (see  \cite{Weierstrass1886}). Much later, in 1948, Florack extended the Weierstrass theorem to arbitrary open Riemann surfaces (see \cite{Florack1948SMIUM}).
In this paper we prove an analogous of this classical result for conformal minimal surfaces in the Euclidean spaces. 

%
%
\begin{theorem}[Weierstrass Interpolation theorem for conformal minimal surfaces]\label{th:main-intro}
Let $\Lambda$ be a closed discrete subset of an open Riemann surface, $M$, and let $n\ge 3$ be an integer. Every map $\Lambda\to\r^n$ extends to a conformal minimal immersion $M\to\r^n$.
\end{theorem}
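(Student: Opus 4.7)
The plan is a standard Runge--Mergelyan exhaustion, but with two extra ingredients built into the induction: the vanishing of real periods (needed for the immersion to be well defined on the full $M$), and prescribed point values from $\Lambda$.

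I would first recall the fundamental observation that a conformal minimal immersion $F\colon M\to\r^n$ is recovered from its $(1,0)$-differential $\Phi=\partial F$ via $F(p)=F(p_0)+2\Re\int_{p_0}^p\Phi$, where $\Phi$ is a holomorphic $1$-form on $M$ with values in the punctured null quadric $\Acal_*=\{z\in\c^n:\sum z_j^2=0\}\setminus\{0\}$ whose real periods $\Re\oint_\gamma\Phi$ vanish for every loop $\gamma\subset M$. Since $\Acal_*$ is an Oka manifold and admits period-dominating sprays, the available Runge--Mergelyan--Weierstrass machinery for such $\Phi$ (developed by Alarc\'on--Forstneri\v c and collaborators) is the right engine to plug into the construction.

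Concretely, write $\Lambda=\{p_j\}_{j\in\n}$ and fix the prescribed values $u_j\in\r^n$. Choose a normal exhaustion $\emptyset=M_0\Subset M_1\Subset M_2\Subset\cdots$ of $M$ by smoothly bounded Runge domains with $\bigcup_j M_j=M$ and $p_j\in M_j\setminus\overline{M_{j-1}}$, and fix $\varepsilon_j>0$ with $\sum\varepsilon_j<\infty$. Starting from an initial conformal minimal immersion $F_1\colon M_1\to\r^n$ with $F_1(p_1)=u_1$ (existence on any bordered Riemann surface is classical), I would construct inductively conformal minimal immersions $F_{j+1}\colon M_{j+1}\to\r^n$ satisfying
\begin{enumerate}
\item[(i)] $\sup_{M_j}\|F_{j+1}-F_j\|<\varepsilon_{j+1}$ (in a $\Ccal^1$ sense, so that the limit stays an immersion),
\item[(ii)] $F_{j+1}(p_i)=u_i$ for every $i=1,\ldots,j+1$.
\end{enumerate}
Granted the inductive step, the limit $F=\lim_j F_j$ exists uniformly on compacta, is a conformal minimal immersion by stability of immersivity under small $\Ccal^1$ perturbations, and satisfies $F(p_j)=u_j$ for all~$j$ by construction.

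The whole weight of the proof therefore sits in the inductive step, and this is what I expect to be the main obstacle. The idea is to approximate $\Phi_j=\partial F_j$ on $\overline{M_j}$ by a holomorphic $1$-form $\Phi_{j+1}$ on $M_{j+1}$ with values in $\Acal_*$, vanishing real periods on a homology basis of $M_{j+1}$, and such that $2\Re\int_{p_1}^{p_{j+1}}\Phi_{j+1}=u_{j+1}-u_1$ along a short arc joining $\overline{M_j}$ to $p_{j+1}$. This is a three-fold constrained approximation problem. I would handle it by enlarging $\overline{M_j}$ to an admissible Runge set $S\subset M_{j+1}$ consisting of $\overline{M_j}$ together with the arc to $p_{j+1}$ and arcs realising a basis of $H_1(M_{j+1};\z)$, defining on $S$ a continuous extension of $\Phi_j$ into $\Acal_*$ whose real integrals along these extra arcs produce the prescribed vector $u_{j+1}-u_1$ and still satisfy the closed-loop conditions, and then invoking the Runge--Mergelyan theorem with approximation in $\Acal_*$ (available because $\Acal_*$ is Oka) together with a period-dominating spray to absorb the periods and interpolation data at $p_1,\ldots,p_{j+1}$. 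The non-trivial point is that the period-dominating spray must be constructed compatibly with the fixed interpolation values already attained at $p_1,\ldots,p_j$, which is achieved by choosing the variation parameters supported away from these points.
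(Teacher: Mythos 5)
Your proposal follows essentially the same strategy as the paper's proof: reduce to a holomorphic map $f\colon M\to\Agot_*$ with $\Phi=f\theta$, perform a Runge exhaustion, and at each step apply a Mergelyan-type theorem for maps into the Oka manifold $\Agot_*$ combined with period-dominating sprays supported away from the interpolation points to control both the vanishing of real periods and the prescribed real arc-integrals. The paper organizes this through the more general Theorem~\ref{th:MtT} and Lemma~\ref{lem:main-noncritical} (handling the critical/noncritical topology cases separately and additionally obtaining jet-interpolation and flux control en route to Theorem~\ref{th:main-intro2}), but the core inductive mechanism you describe --- extending over an admissible set $S$ containing $\overline{M_j}$, the new homology arcs, and an arc to $p_{j+1}$, then using a spray whose parameters decouple along the arcs to absorb periods while fixing the already-attained values --- is exactly the one the paper uses.
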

Let $M$ be an open Riemann surface and $n\ge 3$ be an integer. By the Identity Principle it is not possible to prescribe values of a conformal minimal immersion $M\to\r^n$ on a subset that is not closed and discrete, hence the assumptions on $\Lambda$ in Theorem \ref{th:main-intro} are necessary. 

Recall that a conformal immersion $X=(X_1,\ldots,X_n)\colon M\to\r^n$ is minimal 
if, and only if, $X$ is a harmonic map. 
If this is the case then, denoting by $\di$ the $\c$-linear part of the exterior differential $d=\di+\overline\di$ on $M$ 
(here $\overline\di$ denotes the $\c$-antilinear part of $d$), the $1$-form $\di X=(\di X_1,\ldots,\di X_n)$ with values in $\c^n$ is holomorphic, has no zeros, and satisfies
$\sum_{j=1}^n (\di X_j)^2=0$ everywhere on $M$. Therefore, $\di X$ determines the Kodaira type holomorphic map
\[
	G_X\colon M\to\cp^{n-1},\quad M\ni p\mapsto G_X(p)=[\di X_1(p)\colon\cdots\colon \di X_n(p)],
\]
which takes values in the complex hyperquadric 
\[
	Q_{n-2}=\big\{[z_1\colon\cdots\colon z_n]\in\cp^{n-1}\colon z_1^2+\cdots+z_n^2=0\big\}
	\subset\cp^{n-1}
\]
 and is known as {\em the generalized Gauss map of $X$}; conversely, every holomorphic map $M\to Q_{n-2}\subset\cp^{n-1}$ is the generalized Gauss map of a conformal minimal immersion $M\to\r^n$ (see Alarc\'on, Forstneri\v c, and L\'opez \cite{AlarconForstnericLopez2016Gauss}). 
%
The real part $\Re(\di X)$ is an exact $1$-form on $M$; the {\em flux map} (or simply, the {\em flux}) of $X$ is the group homomorphism $\Flux_X\colon H_1(M;\z)\to\r^n$, of the first homology group of $M$ with integer coefficients, given by
\[
    \Flux_X(\gamma)=\int_\gamma\Im(\di X)=-\imath\int_\gamma \di X,\quad \gamma\in H_1(M;\z),
\]
where $\Im$ denotes the imaginary part and $\imath=\sqrt{-1}$.

Conversely, every holomorphic $1$-form $\Phi=(\phi_1,\ldots,\phi_n)$ with values in $\c^n$, vanishing nowhere on $M$, satisfying the nullity condition 
\begin{equation}\label{eq:nullity}
      \sum_{j=1}^n (\phi_j)^2=0\quad \text{everywhere on $M$},
\end{equation}
and whose real part $\Re(\Phi)$ is exact on $M$, determines a conformal minimal immersion $X\colon M\to\r^n$ with $\di X=\Phi$ by the classical Weierstrass formula
\begin{equation}\label{eq:Weierstrass}
     X(p)=x_0+2\int_{p_0}^p \Re(\Phi),\quad p\in M,
\end{equation}
for any fixed base point $p_0\in M$ and initial condition $X(p_0)=x_0\in\r^n$. (We refer to Osserman \cite{Osserman-book} for a standard reference on Minimal Surface theory.)
This representation formula has greatly influenced the study of minimal surfaces in $\r^n$ by providing powerful tools coming from Complex Analysis in one and several variables. In particular, Runge and Mergelyan theorems for open Riemann surfaces (see Bishop \cite{Bishop1958PJM} and also \cite{Runge1885AM,Mergelyan1951DAN}) and, more recently, the modern {\em Oka Theory} (we refer to the monograph by Forstneri\v c \cite{Forstneric2017} and to the surveys by L\'arusson \cite{Larusson2010NAMS}, Forstneri\v c and L\'arusson \cite{ForstnericLarusson2011NY}, Forstneri\v c \cite{Forstneric2013AFSTM}, and Kutzschebauch \cite{Kutzschebauch2014SPMS}) have been exploited in order to develop a uniform approximation theory for conformal minimal surfaces in the Euclidean spaces which is analogous to the one of holomorphic functions in one complex variable and has found plenty of applications; see
\cite{AlarconLopez2012JDG,AlarconLopez2014TAMS,AlarconForstneric2014IM,AlarconLopez2015GT,DrinovecForstneric2016TAMS,AlarconForstnericLopez2016MZ,AlarconForstnericLopez2016Pre1,ForstnericLarusson2016Pre1} and the references therein. In this paper we extend some of the methods invented for developing this approximation theory 
 in order to provide also interpolation on closed discrete subsets of the underlying complex structure.

Theorem \ref{th:main-intro} follows from the following much more general result ensuring not only interpolation but also {\em jet-interpolation of given finite order}, approximation on holomorphically convex compact subsets, control on the flux, and global properties such as completeness and, under natural  assumptions, properness and injectivity. If $A$ is a 
compact domain in an open Riemann surface, by a {\em conformal minimal immersion $A\to\r^n$ of class $\Cscr^m(A)$}, $m\in\z_+=\{0,1,2,\ldots\}$, we mean an immersion $A\to\r^n$ of class $\Cscr^m(A)$ whose restriction to the interior $\mathring A=A\setminus bA$ is a conformal minimal immersion; we use the same notation if $A$ is a union of pairwise disjoint such domains. 

%
%

\begin{theorem}[Runge Approximation with Jet-Interpolation for conformal minimal surfaces]\label{th:main-intro2}
Let $M$ be an open Riemann surface, $\Lambda\subset M$ be a closed discrete subset, and $K\subset M$ be a smoothly bounded compact domain such that $M\setminus K$ has no relatively compact connected components.
For each $p\in\Lambda$ let $\Omega_p\subset M$ be a 
compact neighborhood of $p$ in $M$, assume that $\Omega_p\cap \Omega_q=\emptyset$ for all $p\neq q\in \Lambda$, and set $\Omega:=\bigcup_{p\in\Lambda}\Omega_p$. Also let  $X\colon K\cup\Omega\to\r^n$ 
$(n\ge 3)$ be a conformal minimal immersion of class $\Cscr^1(K\cup\Omega)$ 
and let $\pgot\colon H_1(M;\z)\to\r^n$ be a group homomorphism satisfying 
\[
\Flux_X(\gamma)=\pgot(\gamma)\quad \text{for all closed curves $\gamma\subset K$.}
\]
Then, given $k\in\z_+$, $X$ may be approximated uniformly on $K$ by complete conformal minimal immersions $\wt X\colon M\to\r^n$ enjoying the following properties:
\begin{enumerate}[\rm (I)]
\item $\wt X$ and $X$ have a contact of order $k$ at every point in $\Lambda$.
\item $\Flux_{\wt X}=\pgot$.
\item If the map $X|_\Lambda\colon\Lambda\to\r^n$ is proper then we can choose $\wt X\colon M\to\r^n$ to be proper.
\item If $n\ge 5$  and the map $X|_\Lambda\colon\Lambda\to\r^n$ is injective, then we can choose $\wt X\colon M\to\r^n$ to be injective.
\end{enumerate}
\end{theorem}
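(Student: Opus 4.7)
The plan is to realise $\widetilde X$ as the uniform-on-compacta limit of an inductive sequence of conformal minimal immersions defined on a Runge exhaustion of $M$. Fix a normal exhaustion $M_1\subset M_2\subset\cdots$ by smoothly bounded compact Runge domains with $K\subset\mathring M_1$ and $\bigcup_j M_j=M$, arranged so that each $\Omega_p$ lies in some $\mathring M_{j(p)}$ and each element of a fixed homology basis of $H_1(M;\z)$ is eventually contained in some $M_j$. Write $\Lambda_j:=\Lambda\cap M_j$. At step $j$ one works on an admissible set $S_j\subset M_j$ obtained from $M_{j-1}$ by adjoining the neighborhoods $\Omega_p$ for $p\in\Lambda_j\setminus\Lambda_{j-1}$ together with Jordan arcs linking them to $M_{j-1}$, chosen so that $S_j$ has the homotopy type of $M_j$. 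Inductively build conformal minimal immersions $X_j\colon M_j\to\r^n$ of class $\Cscr^1$ such that (a) $X_j$ and $X$ have contact of order $k$ at every point of $\Lambda_j$, (b) $\Flux_{X_j}=\pgot$ on $H_1(M_j;\z)$, (c) $\|X_j-X_{j-1}\|_{\Cscr^1(M_{j-1})}<2^{-j}$, and (d) the intrinsic distance from a fixed base point $p_0\in\mathring M_1$ to $bM_j$ in the metric induced by $X_j$ exceeds $j$. Summability of the errors then produces a complete conformal minimal immersion $\widetilde X\colon M\to\r^n$ with all the required properties.

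The inductive step rests on a Mergelyan-type approximation lemma with jet-interpolation and flux control: a conformal minimal immersion on $S_j$ can be approximated, uniformly on $S_j$, by a conformal minimal immersion on the larger Runge domain $M_{j+1}$ that has contact of order $k$ at each point of $\Lambda_{j+1}$ and has any prescribed flux on $H_1(M_{j+1};\z)$ compatible with the flux already on $H_1(S_j;\z)$. Passing to the Weierstrass representation $\Phi:=\di X$, which takes values in the punctured null quadric $Q_{n-2}\setminus\{0\}\subset\c^n$, one constructs a period-dominating holomorphic spray $\Phi_t$ of null $1$-forms on $S_j$ by multiplying $\Phi$ by exponentials of suitable $\c^n$-valued holomorphic maps vanishing to order $k$ on $\Lambda_{j+1}$, chosen so that the differential at $t=0$ of the period map
\[
     t\longmapsto\Big(\int_{\gamma_i}\Phi_t\Big)_{i=1}^N
\]
is surjective, where $\{\gamma_i\}$ is a homology basis of $M_{j+1}$. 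Since $Q_{n-2}\setminus\{0\}$ is an Oka manifold, the Runge--Mergelyan approximation theorem with jet-interpolation for holomorphic maps to Oka manifolds (see Forstneri\v c \cite{Forstneric2017}) extends each $\Phi_t$ to a nearby null $1$-form $\widetilde\Phi_t$ on $M_{j+1}$ with the prescribed $k$-jets at all points of $\Lambda_{j+1}$. The implicit function theorem applied to the period map then produces a parameter $t_*$ close to $0$ for which $\widetilde\Phi_{t_*}$ has imaginary periods yielding a well-defined primitive and real periods yielding flux $\pgot$, and integration via \eqref{eq:Weierstrass} provides $X_{j+1}$.

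Completeness is enforced step by step by a Nadirashvili-type labyrinth argument adapted to the minimal surface setting as in \cite{AlarconLopez2012JDG,AlarconForstneric2014IM}: inside the annular region $M_{j+1}\setminus\mathring M_j$ one erects a compact labyrinth of arcs on which the extra freedom afforded by the spray forces the immersion to oscillate strongly, increasing the intrinsic distance from $p_0$ to $bM_{j+1}$ by at least $1$ without perceptibly changing the immersion on $M_{j-1}$. Properness, under the assumption that $X|_\Lambda$ is proper, is arranged in parallel by forcing $|X_j|>j$ on $bM_j$, exploiting that the prescribed values at $\Lambda\cap(M_{j+1}\setminus M_j)$ already tend to infinity. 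When $n\ge 5$ injectivity is obtained from a general-position argument within the infinite-dimensional space of conformal minimal immersions having the fixed jets at $\Lambda$ and the fixed flux: the self-intersection locus of a generic immersion of a surface into $\r^n$ has expected dimension $4-n<0$, so Thom-type transversality, applied after checking that the admissible perturbations are sufficiently rich, yields an injective approximant.

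The central technical obstacle is the Mergelyan-with-jet-interpolation-and-period-control lemma, since it must simultaneously enforce three rigid conditions on the approximating $1$-form: the nullity $\sum\phi_j^2=0$ (handled by replacing holomorphic $\c^n$-valued approximation by Oka-theoretic approximation into $Q_{n-2}\setminus\{0\}$); exact matching of $k$-jets at the points of $\Lambda$ (handled by restricting to spray parameters that fix those jets and by the jet-interpolation version of the Oka principle); and prescribed periods on a homology basis (handled by the period-dominating spray and the implicit function theorem). Coordinating these three conditions while keeping each perturbation small enough to preserve the completeness, properness, and injectivity accumulated in earlier stages of the induction is the heart of the argument.
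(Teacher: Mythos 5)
Your overall strategy matches the paper's: Runge exhaustion $\{M_j\}$, an inductive construction of conformal minimal immersions with jet-interpolation, period-dominating holomorphic sprays into the (Oka) punctured null quadric, a Jorge--Xavier-style labyrinth for completeness, growth of $|X_j|$ along $bM_j$ for properness, and Abraham transversality for embeddedness when $n\ge5$. The paper in fact proves the directed-curves version (Theorem~\ref{th:main-intro3}) in detail and then adapts it in Section~\ref{sec:maintheoremMS}; you work directly on the minimal-surface side, which is merely a matter of organization.

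There is, however, a genuine gap in the heart of your argument: the period map. You define it as $t\mapsto\bigl(\int_{\gamma_i}\Phi_t\bigr)_i$ where $\{\gamma_i\}$ is a \emph{homology basis}, and then use it only to kill the exactness obstruction and to fix the flux. That is not enough to enforce the $0$-th order interpolation $\wt X(p)=X(p)$ for $p\in\Lambda$. The jet-interpolation Oka theorem together with "spray parameters fixing those jets" pins down the germ of $\Phi=\di\wt X$ at each $p\in\Lambda$, but $\wt X(p)=x_0+2\int_{C_p}\Re\Phi$ is an integral along an entire arc $C_p$ from $p_0$ to $p$, and it is not controlled by the germ at $p$ or by periods over closed curves. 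The paper's period-dominating spray (Lemma~\ref{lem:pathRiemann} and Lemma~\ref{lem:main-noncritical}) is constructed over a CW-complex $C$ that contains, in addition to the homology basis, the oriented Jordan arcs $C_p$ from the base point to the interpolation points, and the period map is required to be submersive also in those components precisely to prescribe $\wt X(p)$. You need to enlarge your period map accordingly, and then verify (as in Lemma~\ref{lem:path-sprays}, using nonflatness on subarcs of each $C_p$) that the differential remains surjective on the extra factors.

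Two further, smaller points. First, you have the roles of $\Re$ and $\Im$ reversed: since $\Re(\di X)=\tfrac12\,dX$ is exact for single-valued $X$, it is the \emph{real} periods that must vanish for the primitive to be well-defined, while the \emph{imaginary} periods give the flux. Second, your completeness step is vaguer than what the proof actually requires. The paper's Lemma~\ref{lem:complete-S} is an intrinsic--extrinsic hybrid: one component function $f_1$ of the Weierstrass datum is kept \emph{unchanged} during the labyrinth deformation (which is why the auxiliary Lemma~\ref{lem:MtFix}, using that $\Agot\cap\{z_1=1\}$ is Oka, is needed), so that paths avoiding the labyrinth disks already acquire length $>\tau$ from $\int|f_1\theta|$ alone, while paths entering the disks acquire length extrinsically from the large displacement $|\int h\theta|$. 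A generic "spray makes the immersion oscillate" description does not by itself produce that dichotomy, and without keeping one coordinate frozen the period-dominating spray and the labyrinth deformation interfere with each other.
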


Condition {\rm (I)} in the above theorem is equivalent to $\wt X|_\Lambda=X|_\Lambda$ and, if $k>0$, the holomorphic $1$-form $\di(\wt X-X)$, assuming values in $\c^n$, has a zero of multiplicity (at least) $k$ at all points in $\Lambda$; in other words, {\em the maps $\wt X$ and $X$ have the same $k$-jet at every point in $\Lambda$} (see Subsec.\ \ref{subsec:Jets}).
This is reminiscent to the generalization of the Weierstrass Interpolation theorem provided by Behnke and Stein in 1949 and asserting that on an open Riemann surface one may prescribe values to arbitrary finite order for a holomorphic function at the points in a given closed discrete subset  (see \cite{BehnkeStein1949MA} or \cite[Theorem 2.15.1]{NapierRamachandran2011Book}). In particular, choosing $k=1$ in Theorem \ref{th:main-intro2} we obtain that {\em on a closed discrete subset of an open Riemann surface, $M$, one can prescribe the values of a conformal minimal immersion $M\to\r^n$ $(n\ge 3)$ and of its generalized Gauss map $M\to Q_{n-2}\subset\cp^{n-1}$} (see Corollary \ref{co:}). The case $\Lambda=\emptyset$ in Theorem \ref{th:main-intro2} (that is, when one does not take care of the interpolation) was recently proved by Alarc\'on, Forstneri\v c, and L\'opez (see \cite[Theorem 1.2]{AlarconForstnericLopez2016MZ}).

Note that the assumptions on $X|_\Lambda$ in assertions {\rm (III)} and {\rm (IV)} in Theorem \ref{th:main-intro2} are necessary. We also point out that if $\Lambda$ is infinite then there are injective maps $\Lambda\to\r^n$ which do not extend to a topological embedding $M\to\r^n$; and hence, in general, one can not choose the conformal minimal immersion $\wt X$ in {\rm (IV)} to be an  {\em embedding} (i.e., a homeomorphism onto $\wt X(M)$ endowed with the subspace topology inherited from $\r^n$). On the other hand, since proper injective immersions $M\to\r^n$ are embeddings, we can choose $\wt X$ in Theorem \ref{th:main-intro2} to be a proper conformal minimal embedding provided that $n\ge 5$ and $X|_\Lambda\colon\Lambda\to\r^n$ is both proper and injective.

%
%
 
Let us now say a word about our methods of proof. 
Given a holomorphic $1$-form $\theta$ on $M$ with no zeros (such exists by the Oka-Grauert principle; see \cite{Grauert1957MA,Grauert1958MA} or \cite[Theorem 5.3.1]{Forstneric2017}), any holomorphic $1$-form $\Phi=(\phi_1,\ldots,\phi_n)$ on $M$ with values in $\c^n$ and satisfying the nullity condition \eqref{eq:nullity} can be written in the form $\Phi=f\theta$ where $f\colon M\to\c^n$ is a holomorphic function taking values in the {\em null quadric} (also called the {\em complex light cone}):
\begin{equation}\label{eq:nullquadric}
    \Agot:=\{z=(z_1,\ldots,z_n)\in\c^n\colon z_1^2+\cdots+z_n^2=0\}.
\end{equation}
Therefore, in order to prove Theorem \ref{th:main-intro} one needs to find a holomorphic map 
\[
	f\colon M\to\Agot\setminus\{0\}\subset\c^n
\]
 such that $\Re(f\theta)$ is an exact real $1$-form on $M$ 
and
\[
     2 \int_{p_0}^p \Re(f\theta)= \Zgot(p)\quad \text{for all $p\in \Lambda$},
\]
where $p_0\in M\setminus \Lambda$ is a fixed base point and $\Zgot\colon\Lambda\to\r^n$ is the given map.
Then the formula \eqref{eq:Weierstrass} with $x_0=0$ and $\Phi=f\theta$ provides a conformal minimal immersion satisfying the conclusion of the theorem. The key in this approach is that the {\em punctured null quadric}
\begin{equation}\label{eq:punctured}
     \Agot_*:=\Agot\setminus\{0\}
\end{equation}
is a {\em complex homogeneous manifold} and hence an {\em Oka manifold} (see \cite[Example 4.4]{AlarconForstneric2014IM}); thus, there are {\em many} holomorphic maps $M\to \Agot_*$  (see Subsec.\ \ref{ss:Oka} for more information). 

The proof of Theorem \ref{th:main-intro2} is much more involved and elaborate. It requires, in addition to the above, a subtle use of the Runge-Mergelyan theorem with jet-interpolation for holomorphic maps from open Riemann surfaces into Oka manifolds (see Theorem \ref{th:MTJI}) to achieve condition {\rm (I)}, a conceptually new intrinsic-extrinsic version of the technique by Jorge and Xavier from \cite{JorgeXavier1980AM} to ensure completeness of the interpolating immersions (see Lemma \ref{lem:complete-S} and Subsec.\ \ref{ss:(I)}), and, in order to guarantee assertion {\rm (III)}, of extending the recently developed methods in \cite{AlarconLopez2012JDG,AlarconForstneric2014IM,AlarconForstnericLopez2016MZ} for constructing proper minimal surfaces in $\r^n$ with arbitrary complex structure (see Lemma \ref{lem:proper-S} and Subsec.\ \ref{ss:(II)}). Moreover, in order to prove {\rm (IV)} we adapt the transversality approach by Abraham \cite{Abraham1963BAMS} (cf.\  \cite{AlarconForstneric2014IM,AlarconDrinovecForstnericLopez2015PLMS,AlarconForstnericLopez2016MZ} for its implementation in Minimal Surface theory); see Theorem \ref{th:gp-S}.

The above described method for constructing conformal minimal surfaces in $\r^n$, based on Oka theory, was introduced by Alarc\'on and Forstneri\v c in \cite{AlarconForstneric2014IM} and it also works in the more general framework of {\em directed holomorphic immersions} of open Riemann surfaces into complex Euclidean spaces. Directed immersions have been the focus of interest in a number of classical geometries such as symplectic, contact, Lagrangian, totally real, etc.; we refer for instance to the monograph by Gromov \cite{Gromov1986Book}, to Eliashberg and Mishachev \cite[Chapter 19]{EliashbergMishachev2002Book}, and to the introduction of \cite{AlarconForstneric2014IM} for motivation on this subject. Given a (topologically) closed conical complex subvariety $\Sgot$ of $\c^n$ $(n\ge 3)$, a holomorphic immersion $F\colon M \to \c^n$ of an open Riemann surface $M$ into $\c^n$ is said to be {\em directed by} $\Sgot$, or an {\em $\Sgot$-immersion}, if its complex derivative $F'$ with respect to any local holomorphic coordinate on $M$ assumes values in 
\[
	\Sgot_*:=\Sgot\setminus\{0\}
\]
(cf.\ \cite[Definition 2.1]{AlarconForstneric2014IM}). If $A$ is a 
compact domain in an open Riemann surface, or a union of pairwise disjoint such domains, by an {\em $\Sgot$-immersion $A\to\c^n$ of class $\Ascr^m(A)$} $(m\in\z_+)$ we mean an immersion $A\to\c^n$ of class $\Cscr^m(A)$ whose restriction to the interior, $\mathring A$, is a (holomorphic) $\Sgot$-immersion. 
 Among others, general existence, approximation, and desingularization results were proved in \cite{AlarconForstneric2014IM} for certain families of directed holomorphic immersions, including {\em null curves}: holomorphic curves in $\c^n$ which are
 directed by the null quadric $\Agot\subset\c^n$ (see \eqref{eq:nullquadric}). It is well known that the real and imaginary parts of a null curve $M\to\c^n$ are conformal minimal immersions $M\to\r^n$ whose flux map vanishes everywhere on $H_1(M;\z)$; conversely, every conformal minimal immersion $M\to\r^n$ is locally, on every simply-connected domain of $M$, the real part of a null curve $M\to\c^n$ (see \cite[Chapter 4]{Osserman-book}). 

The second main theorem of this paper is an analogue of Theorem \ref{th:main-intro2} for a wide family of directed holomorphic curves in $\c^n$  which includes null curves. Given integers $1\le j\le n$ we denote by $\pi_j\colon\c^n\to\c$ the coordinate projection $\pi_j(z_1,\ldots,z_n)=z_j$.

%
%

\begin{theorem}[Runge Approximation with Jet-Interpolation for directed holomorphic curves]\label{th:main-intro3}
Let $\Sgot$ be an irreducible closed conical complex subvariety of $\c^n$ $(n\ge 3)$ which is contained in no hyperplane and such that $\Sgot_*=\Sgot\setminus\{0\}$ is smooth and an Oka manifold. Let $M$,  $\Lambda$, $K$, and $\Omega$ be as in Theorem \ref{th:main-intro2} and  
let  $F\colon K\cup\Omega\to\c^n$ 
be an $\Sgot$-immersion of class $\Ascr^1(K\cup\Omega)$. 
 Then, given $k\in\n$, $F$ may be approximated uniformly on $K$ by $\Sgot$-immersions $\wt F\colon M\to\c^n$ such that $\wt F-F$ has a zero of multiplicity (at least) $k$ at every point in $\Lambda$. Moreover, if the map $F|_\Lambda\colon\Lambda\to\c^n$ is injective, then we can choose $\wt F\colon M\to\c^n$ to be injective.

Furthermore:
\begin{enumerate}[\rm (I)]
\item If $\Sgot\cap\{z_1=1\}$ is an Oka manifold and $\pi_1\colon \Sgot\to\c$ admits a local holomorphic section $h$ near $\zeta=0\in\c$ with $h(0)\neq 0$, then we may choose $\wt F$ to be complete.
\item If $\Sgot\cap\{z_j=1\}$ is an Oka manifold and $\pi_j\colon \Sgot\to\c$ admits a local holomorphic section $h_j$ near $\zeta=0\in\c$ with $h_j(0)\neq 0$ for all $j\in\{1,\ldots,n\}$, and if the map $F|_\Lambda\colon\Lambda\to\c^n$ is proper, then we may choose $\wt F\colon M\to\c^n$ to be proper.
\end{enumerate}
\end{theorem}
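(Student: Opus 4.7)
The plan is to mirror the proof scheme of Theorem \ref{th:main-intro2} in the directed holomorphic setting, which is technically cleaner because the period obstructions on the derivative are now $\c$-linear rather than a mixture of complex periods and real exactness. Fix a nowhere vanishing holomorphic $1$-form $\theta$ on $M$ (available by the Oka-Grauert principle). Every $\Sgot$-immersion $F\colon M\to\c^n$ is determined up to the constant $F(p_0)$ by its derivative $f=dF/\theta\colon M\to\Sgot_*$, subject to the period vanishing conditions $\int_\gamma f\theta=0$ for all $\gamma\in H_1(M;\z)$. Contact of order $k$ between $\wt F$ and $F$ at a point $p\in\Lambda$ translates to agreement of $\wt F(p)$ with $F(p)$ together with jet-interpolation of order $k-1$ between $dF/\theta$ and its counterpart for $\wt F$ at $p$.

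The basic building block is a Runge approximation with jet-interpolation on a holomorphically convex compactum $L\supset K$: applied to the Oka manifold $\Sgot_*$, Theorem \ref{th:MTJI} yields a holomorphic map $f\colon L\to\Sgot_*$ that approximates $dF/\theta$ on $K$ and realizes the prescribed jet of $dF/\theta$ to order $k-1$ at each point of $\Lambda\cap L$. To kill the periods of $f\theta$ on $H_1(L;\z)$ while preserving the interpolation conditions, I would embed $f$ into a period-dominating holomorphic spray of $\Sgot_*$-valued maps whose parameter derivative at the origin surjects onto the period group; the construction of such sprays in the directed setting is due to \cite{AlarconForstneric2014IM} and relies precisely on the hypothesis that $\Sgot$ is contained in no hyperplane. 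Arranging the spray to be trivial at points of $\Lambda\cap L$ preserves the prescribed jets there, and an implicit function argument then locates a parameter value near zero for which all periods vanish.

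With this single-step engine in place, I would run a standard exhaustion argument: exhaust $M$ by a nested sequence of smoothly bounded Runge compacta $M_1\subset M_2\subset\cdots$ with $M_1\supset K$ and $\Lambda\cap bM_j=\emptyset$ for all $j$, and inductively produce $\Sgot$-immersions $F_j\colon M_j\cup\Omega\to\c^n$ of class $\Ascr^1$ which $(a)$ approximate $F_{j-1}$ on $M_{j-1}$ within a summable error, $(b)$ have contact of order $k$ with $F$ at every point of $\Lambda$, and $(c)$ satisfy the additional geometric constraints required for conclusions (I), (II), and/or injectivity. With errors chosen sufficiently small, the limit $\wt F=\lim_j F_j$ exists on all of $M$, is an $\Sgot$-immersion of class $\Ascr^1$, and satisfies the prescribed interpolation at every point of $\Lambda$.

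The main technical obstacle lies in weaving the completeness, properness, and injectivity conditions into each inductive step without destroying period control or the prescribed jets at $\Lambda$. For (I), I would apply the intrinsic-extrinsic Jorge-Xavier type construction of Lemma \ref{lem:complete-S} on the annulus $M_{j+1}\setminus\mathring M_j$ to increase the intrinsic distance from $bM_{j+1}$ by a definite amount; this step uses exactly the additional Oka assumption on $\Sgot\cap\{z_1=1\}$ and the existence of the local section $h$ of $\pi_1$. For (II), the analogous Lemma \ref{lem:proper-S} uses the sections $h_j$ of each $\pi_j$ to push the image of $M_{j+1}\setminus\mathring M_j$ out of the ball of radius $j+1$; properness of $F|_\Lambda$ ensures the prescribed values at $\Lambda$ do not obstruct the exhaustion of $\c^n$. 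After each such modification, a fresh period-dominating spray restores vanishing periods and preserves the jets at $\Lambda\cap M_{j+1}$. Injectivity is handled through Theorem \ref{th:gp-S}, in the spirit of Abraham's transversality theorem \cite{Abraham1963BAMS}: at each stage one perturbs $F_{j+1}$ inside a spray of $\Sgot$-immersions whose parameter-derivative is transversal to the diagonal, and generic parameters give injectivity on $M_{j+1}$ while preserving the jets at the finitely many points of $\Lambda\cap M_{j+1}$. No dimension hypothesis is needed because $\dim_\r\c^n=2n\ge 6$, so the transversality count is automatic.
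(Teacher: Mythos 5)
Your proposal follows essentially the same route as the paper's actual proof of Theorem \ref{th:main-intro3}: reduce to the derivative $f=dF/\theta\colon M\to\Sgot_*$, use Theorem \ref{th:MTJI} for Oka-Mergelyan approximation with jet-interpolation, restore vanishing periods (and the arc-integrals $\int_{C_p}$ pinning down $\wt F(p)$) via period-dominating sprays whose flow functions vanish to order $k$ at $\Lambda$, and run an inductive exhaustion feeding in Theorem \ref{th:gp-S}, Lemma \ref{lem:complete-S}, and Lemma \ref{lem:proper-S} at the appropriate steps. The only point worth flagging is that before building period-dominating sprays one must first arrange that the map is \emph{nonflat} (Proposition \ref{pro:nonflat}), since otherwise the tangent vectors $V_j(f(t_j))$ cannot span $\c^n$; this is a small but necessary preliminary that your sketch omits.
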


In particular, if we are given $\Sgot$, $M$, and $\Lambda$ as in Theorem \ref{th:main-intro3} then {\em every map $\Lambda\to\c^n$ extends to an $\Sgot$-immersion $M\to\c^n$}. When the subset $\Lambda\subset M$ is empty the above theorem except for assertion {\rm (I)} is implied by \cite[Theorems 7.2 and 8.1]{AlarconForstneric2014IM}. It is perhaps worth mentioning to this respect that, if $\Sgot$ is as in assertion {\rm (I)} and $F|_\Lambda\colon\Lambda\to\c^n$ is not proper, Theorem \ref{th:main-intro3} provides {\em complete} $\Sgot$-immersions $M\to\c^n$ which are not proper maps; these seem to be the first known examples of such apart from the case when $\Sgot$ is the null quadric. Let us emphasize that the particular geometry of $\Agot$ has allowed to construct complete null holomorphic curves in $\c^n$ and minimal surfaces in $\r^n$ with a number of different asymptotic behaviors (other than proper in space); see \cite{AlarconLopez2013MA,AlarconForstneric2015MA,AlarconDrinovecForstnericLopez2015Pre,AlarconDrinovecForstnericLopez2015PLMS,AlarconCastro2016Pre1} and the references therein.

Most of the technical part in the proof of Theorems \ref{th:main-intro2} and \ref{th:main-intro3} will be furnished by a general result concerning periods of holomorphic $1$-forms with values in a closed conical complex subvariety of $\c^n$ (see Theorem \ref{th:MtT} for a precise statement). With this at hand, the proofs of Theorems \ref{th:main-intro2} and \ref{th:main-intro3} are very similar; this is why, with brevity of exposition in mind, we shall spell out in detail the proof of Theorem \ref{th:main-intro3} (which is, in some sense, more general) but only briefly sketch the one of Theorem \ref{th:main-intro2}.

This paper is, to the best of our knowledge, the first contribution to the theory of interpolation by conformal minimal surfaces and directed holomorphic curves in a Euclidean space. 

%
%

\subsection*{Organization of the paper}
In Section \ref{sec:prelim} we state some notation and the preliminaries which are needed throughout the paper; we also show an observation which is crucial to ensure the jet-interpolation conditions in Theorems \ref{th:main-intro2} and \ref{th:main-intro3} (see Lemma \ref{lem:jet}). Section \ref{sec:paths} is devoted to the proof of several preliminary results on the existence of period-dominating sprays of maps into conical complex subvarieties $\Sgot_*$ of $\c^n$; we use them in Section \ref{sec:jet} to prove the non-critical case of a Mergelyan theorem with jet-interpolation and control on the periods for holomorphic maps into a such $\Sgot_*$ being Oka (see Lemma \ref{lem:main-noncritical}), and the main technical result of the paper (Theorem \ref{th:MtT}). In Section \ref{sec:plus} we prove a general position theorem, a completeness lemma, and a properness lemma for $\Sgot$-immersions, which enable us to complete the proof of Theorem \ref{th:main-intro3} in Section \ref{sec:maintheorem}. Finally, Section \ref{sec:maintheoremMS} is devoted to explain how the methods in the proof of Theorem \ref{th:main-intro3} can be adapted to prove Theorem \ref{th:main-intro2}. 


\section{Preliminaries}\label{sec:prelim}

We denote $\imath=\sqrt{-1}$, $\z_+=\{0,1,2,\ldots\}$, and $\r_+=[0,+\infty)$.
Given an integer $n\in\n=\{1,2,3,\ldots\}$ and $\k\in\{\r,\c\}$, we denote by $|\cdot|$, $\dist(\cdot,\cdot)$, and $\length(\cdot)$ the Euclidean norm, distance, and length in $\k^n$, respectively. If $K$ is a compact topological space and $f\colon K\to \k^n$ is a continuous map, we denote by 
\[
	\|f\|_{0,K}:=\max\{|f(p)|\colon p\in K\}
\]
the maximum norm of $f$ on $K$. Likewise, given $x=(x_1,\ldots,x_n)$ in $\k^n$ we denote 
\[
	|x|_\infty:=\max\{|x_1|,\ldots,|x_n|\} \quad\text{and}\quad \|f\|_{\infty,K}:=\max\{|f(p)|_\infty\colon p\in K\}.
\]
If $K$ is a subset of a Riemann surface, $M$, then for any $r\in\z_+$ we shall denote by $\|f\|_{r,K}$  the standard $\Cscr^r$ norm of a function $f\colon K\to\k^n$ of class $\Cscr^r(K)$, where the derivatives are measured with respect to a Riemannian metric on $M$ (the precise choice of the metric will not be important).

Given a smooth connected surface $S$ (possibly with nonempty boundary) and a smooth immersion $X\colon S\to\k^n$, we denote by $\dist_X\colon S\times S\to\r_+$ the Riemannian distance induced on $S$ by the Euclidean metric of $\k^n$ via $X$; i.e.,
\[
	\dist_X(p,q):=\inf\{\length(X(\gamma))\colon 
	\text{$\gamma\subset S$ arc connecting $p$ and $q\}$},\quad p,q\in S.
\]
Likewise, if $K\subset S$ is a relatively compact subset we define
\[
	\dist_X(p,K):=\inf\{ \dist_X(p,q)\colon q\in K \},\quad p\in S.
\]

An immersed open surface $X\colon S\to \k^n$ $(n\ge 3)$ is said to be {\em complete} if the image by $X$ of any proper path $\gamma\colon [0,1)\to S$ has infinite Euclidean length; equivalently, if the Riemannian metric on $S$ induced by $\dist_X$ is complete in the classical sense. On the other hand, $X\colon S\to \k^n$ is said to be {\em proper} if the image by $X$ of every proper path $\gamma\colon [0,1)\to S$ is a divergent path in $\k^n$.


\subsection{Riemann surfaces and spaces of maps}\label{ss:RS}

Throughout the paper every Riemann surface will be considered connected if the contrary is not indicated.

Let $M$ be an open Riemann surface. Given a subset $A\subset M$ we denote by $\Ocal(A)$ the space of functions $A\to \c$ which are holomorphic on an unspecified open neighborhood of $A$ in $M$. If $A$ is a smoothly bounded compact domain, or a union of pairwise disjoint such domains, and $r\in\z_+$, we denote by $\Ascr^r(A)$ the space of $\Cscr^r$ functions $A\to\c$ which are holomorphic on the interior $\mathring A=A\setminus bA$; for simplicity we write $\Ascr(A)$ for $\Ascr^0(A)$. Likewise, we define the spaces $\Ocal(A,Z)$ and $\Ascr^r(A,Z)$ of maps $A\to Z$ to any complex manifold $Z$. Thus, if $\Sgot$ is a closed conical complex subvariety of $\c^n$ $(n\ge 3)$, by an $\Sgot$-immersion $A\to\c^n$ of class $\Ascr^r(A)$ we simply mean an immersion of class $\Ascr^r(A)$ whose restriction to $\mathring A$ is an $\Sgot$-immersion. In the same way, a conformal minimal immersion $A\to\r^n$ of class $\Cscr^r(A)$ will be nothing but an immersion of class $\Cscr^r(A)$ whose restriction to $\mathring A$ is a conformal minimal immersion.

By a {\em compact bordered Riemann surface} we mean a compact Riemann surface $M$ with nonempty boundary $bM$ consisting of finitely many pairwise disjoint smooth Jordan curves. The interior $\mathring M=M\setminus bM$ of $M$ is called a {\em bordered Riemann surface}. It is well known that every compact bordered Riemann surface $M$ is diffeomorphic to a smoothly bounded compact domain in an open Riemann surface $\wt M$. The spaces $\Ascr^r(M)$ and $\Ascr^r(M,Z)$, for an integer $r\in\z_+$ and a complex manifold $Z$, are defined as above.

A compact subset $K$ in an open Riemann surface $M$ is said to be {\em Runge} (also called {\em holomorphically convex} or {\em $\Oscr(M)$-convex}) if every continuous function $K\to\c$, holomorphic in the interior $\mathring K$, may be approximated uniformly on $K$ by holomorphic functions on $M$; by the Runge-Mergelyan theorem \cite{Runge1885AM,Mergelyan1951DAN,Bishop1958PJM} this is equivalent to that $M\setminus K$ has no relatively compact connected components in $M$. 
The following particular kind of Runge subsets will play a crucial role in our argumentation.
\begin{definition}\label{def:admissible}
A nonempty compact subset $S$ of an open Riemann surface $M$ is called {\em admissible} if it is Runge in $M$ and of the form $S=K\cup \Gamma$, where $K$ is the union of finitely many pairwise disjoint  smoothly bounded compact domains in $M$ and $\Gamma:=\overline{S\setminus K}$ is a finite union of pairwise disjoint smooth Jordan arcs and closed Jordan curves meeting $K$ only in their endpoints (or not at all) and such that their intersections with the boundary $bK$ of $K$ are transverse. 
\end{definition}

If $C$ and $C'$ are oriented arcs in $M$, and the initial point of $C'$ is the final one of $C$, we denote by $C*C'$ the product of $C$ and $C'$, i.e., the oriented arc $C\cup C'\subset M$ with initial point the initial point of $C$ and final point the final point of $C'$.

Every open connected Riemann surface $M$ contains a $1$-dimensional embedded CW-complex $C\subset M$ such that there is a strong deformation retraction $\rho_t\colon M\to M$ $(t \in [0,1])$; i.e., $\rho_0={\rm Id}_M$, $\rho_t|_C={\rm Id}|_C$ for all $t\in [0,1]$, and $\rho_1(M) = C$. It follows that the complement $M\setminus C$ has no relatively compact connected components in $M$ and hence $C$ is Runge.  Such a CW-complex $C\subset M$ represents the topology of $M$ and can be obtained, for instance, as the Morse complex of a Morse strongly subharmonic exhaustion function on $M$. Recall that the first homology group $H_1(M;\z)=\z^l$ for some $l\in\z_+\cup\{\infty\}$. It is not difficult to see that, if $M$ is finitely-connected (for instance, if it is a bordered Riemann surface), i.e., if $l\in\z_+$, then, given a point $p_0\in M$ there is a CW-complex $C\subset M$ as above which is a bouquet of $l$ circles with base point $p_0$; i.e., $\{p_0\}$ is the only $0$-cell of $C$, and $C$ has $l$ $1$-cells $C_1,\ldots, C_l$ which are closed Jordan curves on $M$ that only meet at $p_0$. 


\subsection{Jets}\label{subsec:Jets}

Let $\Mcal$ and $\Ncal$ be smooth manifolds without boundary, $x_0\in \Mcal$ be a point, and $f,g\colon \Mcal\to\Ncal$ be smooth maps. The maps $f$ and $g$ have, by definition, a {\em contact of order $k\in\z_+$} at the point $x_0$ if their Taylor series at this point coincide up to the order $k$.  An equivalence class of maps $\Mcal\to\Ncal$ which have a contact of order $k$ at the point $x_0$ is called a {\em $k$-jet}; see e.\ g.\ \cite[\textsection 1]{Michor1980SP} for a basic reference. Recall that the Taylor series 
at $x_0$ of a smooth map $f\colon \Mcal\to\Ncal$ does not depend on the choice of coordinate charts on $\Mcal$ and $\Ncal$ centered at $x_0$ and $f(x_0)$ respectively. Therefore, fixing such a pair of coordinates, 
we can identify the $k$-jet of $f$ at $x_0$, which is usually denoted by $j_{x_0}^k(f)$, with the set of derivatives of $f$ at $x_0$ of order up to and including $k$; under this identification of jets we have
\[
     j_{x_0}^0(f)=f(x_0),\; j_{x_0}^1(f)=\big( f(x_0),\frac{\di f}{\di x}\big|_{x_0}\big), 
     \; j_{x_0}^2(f)=\big( f(x_0),\frac{\di f}{\di x}\big|_{x_0},\frac{\di^2 f}{\di x^2}\big|_{x_0}\big),
     \;\ldots
\]
Analogously, if $\Mcal$ and $\Ncal$ are complex manifolds then we consider the complex (holomorphic) derivatives with respect to some local holomorphic coordinates. It is clear that the definition of the $k$-jet of a map at a point is local and hence it can be made for germs of maps at the point. Moreover, if a pair of maps have the same $k$-jet at a point then, obviously, they also have the same $k'$-jet at the point for all $k'\in\z_+,$ $k'\le k$.

In particular, if $\Omega$ is a neighborhood of a point $p$ in an open Riemann surface $M$ and $f,g\colon \Omega\to\c^n$ are holomorphic functions, then they have a contact of order $k\in\z_+$, or the same $k$-jet, at the point $p$ if, and only if, $f-g$ has a zero of multiplicity (at least) $k+1$ at $p$; if this is the case then for any distance function ${\sf d}\colon M\times M\to \r_+$ on $M$ (not necessarily conformal) we have 
\begin{equation}\label{eq:O}    
      |f-g|(q)=O({\sf d}(q,p)^{k+1})\quad \text{as $q\to p$}.
\end{equation}

If $f,g\colon \Omega\to\r^n$ are harmonic maps (as, for instance, conformal minimal immersions), then we say that they have a contact of order $k\in\z_+$, or the same $k$-jet, at the point $p$ if assuming that $\Omega$ is simply-connected there are harmonic conjugates $\wt f$ of $f$ and $\wt g$ of $g$ such that the holomorphic functions $f+\imath\wt f,g+\imath\wt g\colon \Omega\to\c^n$ have a contact of order $k$ at $p$; this is equivalent to that $f(p)=g(p)$ and, if $k>0$, the holomorphic $1$-form $\di(f-g)$ has a zero of multiplicity (at least) $k$ at $p$. Again, if such a pair of maps $f$ and $g$ have the same $k$-jet at the point $p\in \Omega$ then \eqref{eq:O} formally holds. 

The following observation will be crucial in order to ensure the jet-interpolation in the main results of this paper.
\begin{lemma}\label{lem:jet}
Let $V$ be a holomorphic vector field in $\c^n$ $(n\in\n)$, vanishing at $0\in\c^n$, and let $\phi_s$ denote the flow of $V$ for small values of time $s\in\c$. Given an open Riemann surface $M$, a point $p\in M$, and holomorphic functions $f\colon M\to\c^n$ and $h\colon M\to\c$ such that $h$ has a zero of multiplicity $k+1$ at $p$ for some $k\in\z_+$, then the holomorphic map 
\[
     q\mapsto \wt f(q)=\phi_{h(q)}(f(q)),
\]
which is defined on a neighborhood of $p$ in $M$, has a contact of order $k$ with $f$ at the point $p$; that is, $f$ and $\wt f$ have the same $k$-jet at $p$.
\end{lemma}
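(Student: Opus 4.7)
The plan is to carry out a direct Taylor expansion of the flow $\phi_s$ in the time parameter $s$ and use that $h$ vanishes to high order at $p$ to see that $\wt f - f$ vanishes to order at least $k+1$ there. By the definition of contact of order $k$ recalled in Subsection~\ref{subsec:Jets}, this yields precisely what is claimed.

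More concretely, I would start by choosing a local holomorphic coordinate $z$ on $M$ centered at $p$, so that $z(p)=0$ and the hypothesis on $h$ reads $h(z)=z^{k+1}h_1(z)$ for some holomorphic function $h_1$ defined near $0$. Next, since $\phi_0=\mathrm{Id}$ and $\phi_s$ depends holomorphically on $(s,w)$ in a neighborhood of $\{0\}\times\c^n$ (or at least a neighborhood of $\{0\}\times f(p)$), the map $(s,w)\mapsto \phi_s(w)-w$ is holomorphic and vanishes at $s=0$, hence it can be written in the form
\[
	\phi_s(w)-w = s\,g(s,w),
\]
with $g$ holomorphic in $(s,w)$ on its domain (and in fact $g(0,w)=V(w)$, although this will not be needed).

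Substituting $s=h(q)$ and $w=f(q)$, which makes sense for $q$ in a sufficiently small neighborhood $\Omega'$ of $p$ in $M$ (so that $|h(q)|$ is small enough and $f(q)$ stays in the appropriate domain of the flow), we obtain
\[
	\wt f(q)-f(q) \;=\; \phi_{h(q)}(f(q))-f(q) \;=\; h(q)\,g\bigl(h(q),f(q)\bigr), \quad q\in\Omega'.
\]
Since $g(h(q),f(q))$ is holomorphic near $p$ and $h(q)=z(q)^{k+1}h_1(z(q))$ has a zero of multiplicity at least $k+1$ at $p$, the product $\wt f - f$ also has a zero of multiplicity at least $k+1$ at $p$. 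According to the description of $k$-jets in Subsection~\ref{subsec:Jets}, this is equivalent to $\wt f$ and $f$ having the same $k$-jet at $p$, completing the proof.

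There is no real obstacle here; the argument is essentially a one-line Taylor expansion of the flow in its time variable. The only points requiring slight care are verifying that $\wt f$ is well defined and holomorphic on a neighborhood of $p$ (which follows from $h(p)=0$ and holomorphic dependence of the flow on initial data and time) and translating between the formulations "contact of order $k$" and "difference vanishing to order $k+1$", both of which are already made explicit in the preceding subsection. Note that the hypothesis $V(0)=0$ is not actually used in this computation; it is stated in the lemma because in later applications $V$ will be a vector field tangent to the cone $\Sgot$, and such a field automatically vanishes at the vertex of the cone.
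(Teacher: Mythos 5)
Your proof is correct and follows essentially the same route as the paper's: expand the flow in the time variable, factor out one power of $s$ from $\phi_s(w)-w$, and observe that composing with $h$ (which vanishes to order $k+1$) kills the first $k+1$ derivatives of $\wt f - f$ at $p$. The paper phrases this as $\phi_s(z)=z+sV(z)+O(|s|^2)$ while you write $\phi_s(w)-w=s\,g(s,w)$; these are the same estimate. Your closing remark that the hypothesis $V(0)=0$ is not actually used is accurate — the paper's own proof does not invoke it either — and it is a useful observation, since that hypothesis is there only because the vector fields arising in later applications are required to vanish at the vertex of the cone $\Sgot$.
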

\begin{proof}
The flow $\phi_s$ of the vector field $V$ at a point $z\in\c^n$ may be expressed as
\[
      \phi_s(z)
      =z+sV(z)+O(|s|^2),
\]
(see e.\ g.\ \cite[\textsection 4.1]{AbrahamMarsdenRatiu1988SV}). Since $h$ has a zero of multiplicity $k+1$ at $p$, the conclusion of the lemma follows.
\end{proof}

We shall use the following notation at several places throughout the paper.
\begin{notation}\label{not:}
	Let $n\ge 3$ be an integer and $\Sgot$ be a (topologically) closed conical complex subvariety of $\c^n$; by {\em conical} we mean that $t\Sgot=\Sgot$ for all $t\in\c_*=\c\setminus\{0\}$. We also assume that $\Sgot$ is contained in no hyperplane of $\c^n$, and $\Sgot_*:=\Sgot\setminus\{0\}$ is smooth and connected (hence irreducible). We also fix a large integer $N\ge n$ and holomorphic vector fields $V_1,\dots,V_N$ on $\c^n$ which are tangential to $\Sgot$ along $\Sgot$, vanish at $0\in\Sgot$, and satisfy 
	\begin{equation}\label{eq:CartanA}
	\span\{V_1(z),\dots,V_N(z)\}=T_{z}\Sgot\quad \text{for all $z\in\Sgot_*$}.
	\end{equation}
	(Such exist by Cartan's theorem A \cite{Cartan1953}.) 
	\begin{equation}\label{eq:CartanA1}
	\text{Let $\phi^j_s$ denote the flow of the vector field $V_j$}
	\end{equation}
	for $j=1,\dots,N$ and small values of the time $s\in\c$.
\end{notation}

\begin{remark}\label{rem:multiplicity}
Throughout the paper we shall say that a holomorphic function has a zero of multiplicity $k\in\n$ at a point to mean that the function has a zero of multiplicity {\em at least} $k$ at the point. When the multiplicity of the zero is exactly $k$ then it will be explicitly mentioned. We will follow the same pattern when claiming that two functions have the same $k$-jet or a contact of order $k$ at a point.
\end{remark}


\subsection{Oka manifolds}\label{ss:Oka}

In this subsection we recall the notion of Oka manifold and state some of the properties of such manifolds which will be exploited in our argumentation. A comprehensive treatment of Oka theory can be found in \cite{Forstneric2017}; for a briefer introduction to the topic we refer to
 \cite{Larusson2010NAMS,ForstnericLarusson2011NY,Forstneric2013AFSTM,Kutzschebauch2014SPMS}.

\begin{definition}
A complex manifold $Z$ is said to be an {\em Oka manifold} if every holomorphic map from a neighborhood of a compact convex set $K\subset \c^N$ $(N\in\n)$ to $Z$ can be approximated uniformly on $K$ by entire maps $\c^N\to Z$. 
\end{definition}

The central result of Oka theory is that maps $M\to Z$ from a Stein manifold (as, for instance, an open Riemann surface) to an Oka manifold satisfy all forms of the Oka principle (see Forstneri\v c \cite{Forstneric2006AM}). In this paper we shall use as a fundamental tool the following version of the {\em Mergelyan Theorem with Jet-Interpolation} which trivially follows from \cite[Theorems 3.8.1 and 5.4.4]{Forstneric2017}; see also \cite[Theorem 3.2]{Forstneric2004AIF} and \cite[Theorem 4.1]{HormanderWermer1968MS}.

\begin{theorem}\label{th:MTJI}
Let $Z$ be an Oka manifold, let $M$ be an open Riemann surface, and let $S=K\cup\Gamma\subset M$ be an admissible subset in the sense of Definition \ref{def:admissible}. Given a finite subset $\Lambda\subset \mathring{K}$ and an integer $k\in\z_+$, every continuous map $f\colon S\to Z$ which is holomorphic on $\mathring K$ can be approximated uniformly on $S$ by holomorphic maps $M\to Z$ having the same $k$-jet as $f$ at all points in $\Lambda$.
\end{theorem}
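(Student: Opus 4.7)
The plan is to deduce Theorem \ref{th:MTJI} by combining two standard ingredients from Oka theory: the Mergelyan-type approximation theorem for $Z$-valued maps on admissible Runge subsets of open Riemann surfaces, and a jet-correction argument based on flows of tangent vector fields on $Z$ in the spirit of Lemma \ref{lem:jet}. As noted in the paper, the combined statement is already packaged in \cite[Theorems 3.8.1 and 5.4.4]{Forstneric2017} (see also \cite{Forstneric2004AIF,HormanderWermer1968MS}), so the ``proof'' is essentially an assembly of those.

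First, ignoring the jet condition, I would apply the plain Mergelyan theorem to approximate $f$ uniformly on $S$ by a globally holomorphic map $g\colon M\to Z$; this uses only that $S=K\cup\Gamma$ is admissible and Runge in $M$, that $Z$ is Oka, and that $f$ is continuous on $S$ and holomorphic on $\mathring K$. To upgrade $g$ to match the prescribed $k$-jets at the finite set $\Lambda=\{p_1,\dots,p_m\}\subset\mathring K$, I would invoke a dominating holomorphic spray on $Z$, whose existence is guaranteed by the Oka hypothesis: this provides finitely many holomorphic vector fields $V_1,\dots,V_N$ defined on a neighborhood of $g(\Lambda)\cup f(\Lambda)$ in $Z$ whose values span the tangent space at every point there, together with their local flows $\phi^j_s$. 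Using the Weierstrass--Florack interpolation theorem \cite{Weierstrass1886,Florack1948SMIUM} on the open Riemann surface $M$, one may freely prescribe the Taylor coefficients up to order $k$ at each $p_i$ of auxiliary holomorphic functions $H_1,\dots,H_N\in\Oscr(M)$. Setting
\[
   \wt g(q)\;=\;\phi^1_{H_1(q)}\circ\cdots\circ\phi^N_{H_N(q)}\bigl(g(q)\bigr),\qquad q\in M,
\]
a direct Taylor expansion (of which Lemma \ref{lem:jet} is the prototype) shows that the $k$-jet of $\wt g$ at each $p_i$ depends smoothly on the $k$-jets of the $H_j$ at $p_i$, with derivative at the zero jet surjective onto the relevant jet space; this surjectivity uses that the $V_j$ span $T_{g(p_i)}Z$ and that the $H_j$'s Taylor coefficients can be prescribed independently at each $p_i$. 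The implicit function theorem then yields a choice of the $H_j$, of size $O(\|g-f\|_{0,S})$, for which $j^k_{p_i}\wt g=j^k_{p_i} f$ at every $p_i\in\Lambda$; since the correction is small, $\wt g$ remains as close to $f$ on $S$ as desired.

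The main obstacle is the global availability of the vector fields $V_j$ on $Z$. Unlike the ambient setting of \eqref{eq:CartanA}--\eqref{eq:CartanA1}, a general Oka manifold $Z$ is not a submanifold of $\c^n$, so one cannot simply restrict ambient vector fields to obtain the required tangential spanning property. This is precisely where the Oka hypothesis on $Z$ plays its essential role, since by the very characterization of Oka manifolds it supplies a dominating holomorphic spray that can be used in place of the finitely many flows needed for the jet correction. Once this is granted, the argument proceeds as sketched above, and one recovers the combined statement of \cite[Theorems 3.8.1 and 5.4.4]{Forstneric2017} invoked in the paper.
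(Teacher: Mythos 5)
The paper offers no proof of Theorem~\ref{th:MTJI}: it simply notes that the statement follows from \cite[Theorems~3.8.1 and~5.4.4]{Forstneric2017}. The intended reading is that Theorem~3.8.1 (Mergelyan approximation on admissible sets in Riemann surfaces, with jet interpolation at interior points) produces a map holomorphic on a \emph{neighbourhood} of $S$ that matches the $k$-jets of $f$ on $\Lambda$, and then Theorem~5.4.4 (the Oka principle with approximation on $\Oscr(M)$-convex compacta and jet interpolation along a finite set) promotes this to a global holomorphic map $M\to Z$. In that package the jet condition and the Runge-type approximation are handled \emph{simultaneously} by the Oka machinery.

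Your route is genuinely different: approximate first with no jet condition and then try to repair the jets at $\Lambda$ by composing with flows of tangent vector fields on $Z$ with holomorphic time functions $H_j\in\Oscr(M)$. The algebra of the jet-correction is fine --- the triangular dependence of the coefficient of $(q-p_i)^r$ on the Taylor coefficients $a_{j,l}$ of $H_j$ for $l\le r$ does give surjectivity. The gap is in the ingredient you invoke to make this global. You write that the Oka hypothesis ``by the very characterization of Oka manifolds\ldots supplies a dominating holomorphic spray''; this is not correct. Being Oka means satisfying CAP (Definition in Subsec.~\ref{ss:Oka}); it does \emph{not} mean admitting a globally defined dominating spray --- that is ellipticity in Gromov's sense. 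Every elliptic (or subelliptic) manifold is Oka, but the converse implication is a well-known open problem. So you have no holomorphic vector fields $V_1,\dots,V_N$ defined and spanning $T_zZ$ over all of $Z$ (or even over a neighbourhood of $g(M)$, which is not relatively compact in general), and no global dominating spray to substitute for them. Since the correction functions $H_j$ are nonconstant holomorphic functions on $M$, they cannot be made to vanish outside a neighbourhood of $\Lambda$, so the composed deformation $\wt g(q)=\phi^1_{H_1(q)}\circ\cdots\circ\phi^N_{H_N(q)}(g(q))$ is not even defined on all of~$M$ unless the flows exist along the whole image of~$g$. In the paper's setting this problem never arises because the target is the Stein subvariety $\Sgot\subset\c^n$ and Cartan's Theorem~A provides ambient tangential fields as in \eqref{eq:CartanA}; that mechanism has no analogue for an abstract Oka manifold, which is exactly why the proof of \cite[Theorem~5.4.4]{Forstneric2017} must go through local sprays, gluing, and the CAP property rather than a single global deformation. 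To fix your argument you would need to impose ellipticity on $Z$ (a stronger, and for the paper's applications unnecessary, hypothesis), or else fold the jet condition into the Oka-principle step as the cited theorems do.
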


As we emphasized in the introduction, the punctured null quadric $\Agot_*\subset \c^n$ (see \eqref{eq:nullquadric} and \eqref{eq:punctured}) directing minimal surfaces in $\r^n$ and null curves in $\c^n$ is an Oka manifold for all $n\ge 3$ (see \cite[Example 4.4]{AlarconForstneric2014IM} or \cite[Example 5.6.2]{Forstneric2017}). Furthermore, for each $j\in\{1,\ldots,n\}$ the complex manifold $\Agot\cap\{z_j=1\}$ is an embedded copy of the complex $(n-2)$-sphere
\[
	\c S^{n-2}=\{w=(w_1,\ldots,w_{n-1})\in\c^{n-1}\colon w_1^2+\cdots+w_{n-1}^2=1\}.
\]
Observe that $\c S^{n-2}$ is homogeneous relative to the complex Lie group $SO(n-1,\c)$, and hence it is an Oka manifold (see \cite{Grauert1957MA} or \cite[Proposition 5.6.1]{Forstneric2017}); see \cite[Example 6.15.7]{Forstneric2017} and \cite[Example 7.8]{AlarconForstneric2014IM} for a more detailed discussion. Moreover, choosing $k\in\{1,\ldots,n\}$, $k\neq j$, the map $h=(h_1,\ldots,h_n)\colon \c\to\Agot$ given by
\[
	h_j(\zeta)=\zeta,\quad h_k(\zeta)=\sqrt{1-\zeta^2},\quad 
	h_l(\zeta)=\frac{\imath}{\sqrt{n-2}}\quad \text{for all }l\neq j,k,\quad \zeta\in\c,
\]
is a local holomorphic section near $\zeta=0\in\c$ of the coordinate projection $\pi_j\colon \Agot\to\c$, $\pi_j(z_1,\ldots,z_n)=z_j$, which satisfies $h(0)\neq 0$. Thus, the null quadric $\Agot\subset\c^n$ meets the requirements in Theorem \ref{th:main-intro3}, including the ones in assertions {\rm (I)} and {\rm (II)}, for all $n\ge 3$.


\section{Paths in closed conical complex subvarieties of $\c^n$}\label{sec:paths}

In this section we use Notation \ref{not:}; in particular, $\Sgot\subset\c^n$ $(n\ge3)$ denotes a closed conical complex subvariety which is contained in no hyperplane of $\c^n$ and such that $\Sgot_*=\Sgot\setminus\{0\}$ is smooth and connected. We need the following

\begin{definition}\label{def:degenerateI}
Let $Q$ be a topological space and $n\ge 3$ be an integer. A continuous map $f\colon Q\to\c^n$ is said to be {\em flat} if $f(Q)\subset \c z_0=\{\zeta z_0\colon \zeta\in\c\}$ for some $z_0\in\c^n$; and {\em nonflat} otherwise. The map $f$ is said to be {\em nowhere flat} if $f|_{A}\colon A\to\c^n$ is nonflat for all open subset $\emptyset\neq A\subset Q$.
\end{definition}

It is easily seen that a continuous map $f\colon [0,1]\to\Sgot_*\subset\c^n$ is nonflat if, and only if, 
\[
\span\{T_{f(t)}\Sgot\colon t\in [0,1]\}=\c^n. 
\]

\subsection{Paths on $I:=[0,1]$}
%
%

In this subsection we prove a couple of technical results for paths $[0,1]\to\Sgot_*$ which pave the way to the construction of period dominating sprays of holomorphic maps of an open Riemann surface into $\Sgot_*$  (see Lemma \ref{lem:pathRiemann} in the next subsection).

\begin{lemma}\label{lem:path-sprays}
Let $f\colon I\to\Sgot_*$ and $\vartheta\colon I\to\c_*$ be continuous maps. Let $\emptyset\neq I'\subset I$ be a closed subinterval and assume that $f$ is nowhere flat on $I'$. There exist continuous functions $h_1,\ldots,h_N\colon I\to\c$, with support on $I'$, and a neighborhood $U$ of $0\in\c^N$ such that the {\em period map} $\Pcal\colon U\to\c^n$ given by
\[
      \Pcal(\zeta)=\int_0^1 \phi^1_{\zeta_1h_1(t)}\circ \cdots \circ \phi^N_{\zeta_Nh_N(t)}(f(t))\vartheta(t)\, dt,\quad \zeta=(\zeta_1,\ldots,\zeta_N)\in U
\]
(see \eqref{eq:CartanA1}),
is well defined and has maximal rank equal to $n$ at $\zeta=0$.
\end{lemma}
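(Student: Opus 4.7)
The plan is to differentiate the period map at $\zeta=0$, observe that the $j$-th column of its Jacobian equals $w_j:=\int_0^1 V_j(f(t))\,h_j(t)\,\vartheta(t)\,dt$, and then use the Cartan A condition \eqref{eq:CartanA} together with the nowhere-flat hypothesis on $f|_{I'}$ to design the $h_j$'s so that $n$ of these columns are $\c$-linearly independent. The functions $h_j$ will be taken as bump functions concentrated near a carefully chosen finite set of points in $I'$.

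First I would check that $\Pcal$ is well defined and holomorphic on a small polydisc $U\subset\c^N$ about $0$. Since $f(I)$ is a compact subset of $\Sgot_*$ and the $V_j$ are holomorphic on $\c^n$, there exist $\delta>0$ and an open neighborhood $W$ of $f(I)$ in $\c^n$ on which the composition $\phi^1_{s_1}\circ\cdots\circ\phi^N_{s_N}(z)$ is defined and jointly holomorphic in $(s,z)$ whenever $|s_j|<\delta$ and $z\in W$. For bounded continuous $h_j$'s and $|\zeta_j|$ small, the integrand of $\Pcal(\zeta)$ stays in $W$, is continuous in $t$, and holomorphic in $\zeta$; differentiating under the integral sign, and using $\phi^j_0=\mathrm{id}$ and $\tfrac{d}{ds}\phi^j_s(z)\bigr|_{s=0}=V_j(z)$, yields
\[
\frac{\partial\Pcal}{\partial\zeta_j}(0)=\int_0^1 V_j(f(t))\,h_j(t)\,\vartheta(t)\,dt=:w_j,\qquad j=1,\dots,N.
\]

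The nowhere-flat condition gives $\span_{\c}\{T_{f(t)}\Sgot:t\in I'\}=\c^n$, and coupled with \eqref{eq:CartanA} yields $\span_{\c}\{V_j(f(t)):1\le j\le N,\;t\in I'\}=\c^n$. From this I would extract distinct indices $j_1,\dots,j_n\in\{1,\dots,N\}$ and pairwise distinct interior points $t_1,\dots,t_n\in I'$ such that $V_{j_1}(f(t_1)),\dots,V_{j_n}(f(t_n))$ are $\c$-linearly independent. Taking pairwise disjoint compact neighborhoods $I_\ell\subset I'$ of $t_\ell$ and continuous bump functions $\chi_\ell\colon I\to[0,1]$ supported in $I_\ell$ with $\int_I \chi_\ell\,\vartheta\,dt=1$ (possible since $\vartheta$ is nowhere zero), I would set $h_{j_\ell}:=\chi_\ell$ for $\ell=1,\dots,n$ and $h_j:=0$ otherwise; all $h_j$'s have support in $I'$. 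Then $w_j=0$ for $j\notin\{j_\ell\}$, while continuity of $V_{j_\ell}\circ f$ and $\vartheta$ gives $w_{j_\ell}\to V_{j_\ell}(f(t_\ell))$ as $I_\ell$ shrinks to $\{t_\ell\}$; sufficiently concentrated bumps therefore preserve linear independence of the $n$ nonzero columns, so $d\Pcal(0)$ has rank $n$.

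The main obstacle is the \emph{distinctness} of the column indices $j_\ell$ in the extracted family: while the global spanning of the vectors $V_j(f(t))$ over $(j,t)\in\{1,\dots,N\}\times I'$ allows $n$ independent vectors to be selected, a naive greedy choice may reuse a column index, and then a single column $w_j$ cannot simultaneously contribute two independent directions. To enforce distinctness I would argue inductively: assume $\ell<n$ pairs with distinct indices and independent vectors spanning $L\subsetneq\c^n$ are already in hand. If some unused $j'\notin\{j_i\}$ and some $t'\in I'$ satisfy $V_{j'}(f(t'))\notin L$, append the pair. Otherwise $\sum_{j'\notin\{j_i\}}U_{j'}\subseteq L$, where $U_j:=\span\{V_j(f(t)):t\in I'\}$; combined with $\sum_j U_j=\c^n$ this forces $\sum_{i=1}^\ell U_{j_i}=\c^n$, producing some $V_{j_k}(f(t^*))\notin L$ for $k\le\ell$ and $t^*\in I'$. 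Swapping $t_k$ for $t^*$ yields a new distinct-indexed basis of the same size $\ell$ spanning a subspace $L'\neq L$, and iterating this exchange — together with the monotone enlargement of the swept span $L+L'+\cdots\subseteq\sum_i U_{j_i}=\c^n$ across the finitely many dimensions involved — terminates with the desired $n$ distinct-indexed linearly independent vectors $V_{j_\ell}(f(t_\ell))$.
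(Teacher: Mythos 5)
Your proof mirrors the paper's exactly up through the Jacobian computation $\frac{\partial\Pcal}{\partial\zeta_j}(0)=\int_0^1 h_j(t)V_j(f(t))\vartheta(t)\,dt$ and the choice of disjointly supported bump functions $h_j$; the combinatorial core --- producing distinct points at which suitable values of the $V_j\circ f$ span $\c^n$ --- is precisely what the paper asserts at \eqref{eq:span}. The paper states that claim with no argument, and your ``main obstacle'' paragraph correctly identifies that it is not immediate (each column $w_j$ can contribute only one direction, so the indices must be distinct) and attempts to justify it by an exchange.

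The termination of the exchange is not justified as written. That the swept span $L+L'+L''+\cdots$ is nondecreasing and bounded by $\c^n$ only shows it stabilizes after finitely many swaps; it does not force the distinct-indexed independent family to reach size $n$. A swap does not change the used index set $\{j_1,\dots,j_\ell\}$, so $V_0:=\sum_{j'\notin\{j_i\}}U_{j'}$ is a fixed subspace, and nothing in your argument rules out $V_0\subseteq L^{(m)}$ holding for every iterate while $\ell$ stays frozen below $n$ --- even though the swept span may already be all of $\c^n$ (take $n=3$, $\ell=2$, $V_0$ a fixed line lying in each $2$-plane $L^{(m)}$). The hole is patchable by choosing the swap more carefully: since $V_{j_1}(f(t_1)),\dots,V_{j_\ell}(f(t_\ell))$ are independent, $\bigcap_{k=1}^{\ell}\span\{V_{j_{k'}}(f(t_{k'})):k'\neq k\}=\{0\}$, so when $V_0\neq 0$ there is a $k$ with $V_0\not\subseteq\span\{V_{j_{k'}}(f(t_{k'})):k'\neq k\}$, and then any $V_{j_k}(f(t^*))\notin L$ yields $V_0\not\subseteq L'$, putting you back in Case~1. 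As written, however, the ``monotone enlargement'' observation does not close the loop. (And when $V_0=0$ --- all unused $V_{j'}$ vanishing identically along $f(I')$, which \eqref{eq:CartanA} does not rule out when $\dim\Sgot<n$ --- the lemma's conclusion itself fails for that particular family of fields, so some tacit genericity of the fixed $V_1,\dots,V_N$ is being invoked both by you and by the paper.)
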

\begin{proof}
We choose continuous functions $h_1,\ldots,h_N\colon I\to\c$, with support on $I'$, which will be specified later. Then we define for a small neighborhood $U$ of $0\in\c^N$ a map
\[
    \Phi\colon U\times I\to \Sgot
\]
given by
\[
    \Phi(\zeta,t):=\phi^1_{\zeta_1h_1(t)}\circ \cdots \circ \phi^N_{\zeta_Nh_N(t)}(f(t)),\quad \zeta=(\zeta_1,\ldots,\zeta_N)\in U,\; t\in I.
\]
Note that $\Phi(0,t)=f(t)$ for all $t\in I$; recall that each $V_j$ vanishes at $0$ for all $j\in\{1,\ldots,N\}$. Thus, since $f(I)\subset\Sgot_*$ is compact, we may assume that $U$ is small enough so that $\Phi$ is well defined and takes values in $\Sgot_*$. Furthermore, $\Phi$ is holomorphic in the variable $\zeta$ and its derivative with respect to $\zeta_j$ is
\begin{equation}\label{eq:diPhi}
    \left. \frac{\di \Phi(\zeta,t)}{\di \zeta_j}\right|_{\zeta=0}=h_j(t)V_j(f(t)),\quad j=1,\dots,N. 
\end{equation}
(See \eqref{eq:CartanA} and \eqref{eq:CartanA1}.)
Thus, the period map $\Pcal\colon U\to\c^n$ in the statement of the lemma reads
\[
     \Pcal(\zeta)=\int_0^1 \Phi(\zeta,t)\vartheta(t)\, dt,\quad \zeta\in U.
\]
Observe that $\Pcal$ is holomorphic and, in view of \eqref{eq:diPhi},
\begin{equation}\label{eq:diPzeta}
     \left. \frac{\di \Pcal(\zeta)}{\di \zeta_j}\right|_{\zeta=0}=\int_0^1 h_j(t) V_j(f(t))\vartheta(t)\, dt,\quad j=1,\ldots,N.
\end{equation}

Since $f$ is nowhere flat on $I'$ (see Definition \ref{def:degenerateI}), \eqref{eq:CartanA} guarantees the existence of distinct points $t_1,\dots,t_N\in I'$ such that 
\begin{equation}\label{eq:span}
\span\{V_1(f(t_1)),\dots,V_N(f(t_N))\}=\c^n.
\end{equation}

Now we specify the values of the function $h_j$ in $I'$ $(j=1,\dots,N)$; recall that $\supp(h_j)\subset I'$. We choose $h_j$ with support in a small neighborhood $[t_j-\epsilon,t_j+\epsilon]$ of $t_j$ in $I'$, for some $\epsilon>0$, and such that  
\[
      \int_0^1 h_j(t)\, dt =\int_{t_j-\epsilon}^{t_j+\epsilon} h_j(t)\, dt=1.
\] 
Then, for small $\epsilon>0$, we have that
\[
   \int_0^1 h_j(t) V_j(f(t))\vartheta(t)\, dt \approx V_j(f(t_j))\vartheta(t_j),\quad j=1,\dots,N.
\]
Since $\vartheta(t)\neq 0$, \eqref{eq:span} ensures that the vectors on the right side of the above display span $\c^n$, and hence the same is true for the vectors on the left side provided that $\epsilon>0$ is chosen sufficiently small. This concludes the proof in view of \eqref{eq:diPzeta}.
\end{proof}

%
%

\begin{lemma}\label{lem:path-periods}
Let $\vartheta\colon I\to\c_*$ be a continuous map. 
Given points $u_0,u_1\in\Sgot_*$ and $x\in\c^n$,
and a domain $\Omega$ in $\c^n$ containing $0$ and $x$, there exists a continuous function $g\colon I\to\Sgot_*$ which is nowhere flat on a neighborhood of $0$ in $I$ and such that:
\begin{enumerate}[\it i)]
\item $g(0)=u_0$ and $g(1)=u_1$.
\vspace*{1mm}
\item $\int_0^s g(t)\vartheta(t)\, dt\in \Omega$ for all $s\in I$.
\vspace*{1mm}
\item $\int_0^1 g(t)\vartheta(t)\, dt = x.$
\end{enumerate}
\end{lemma}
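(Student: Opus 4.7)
The plan is to first build a preliminary continuous path $g_0\colon I\to\Sgot_*$ whose running integral $\int_0^s g_0(t)\vartheta(t)\,dt$ closely tracks a chosen guide path in $\Omega$ from $0$ to $x$, and then to use the period-dominating spray of Lemma \ref{lem:path-sprays} to shift the total integral to $x$ exactly. Since $\Omega\subset\c^n$ is a domain containing $0$ and $x$, I would pick a $C^1$ embedded path $\alpha\colon I\to\Omega$ from $0$ to $x$ and a number $\rho>0$ such that the closed $\rho$-tube about $\alpha(I)$ is contained in $\Omega$. Since $\Sgot$ is contained in no hyperplane of $\c^n$, I would fix vectors $w_1,\dots,w_n\in\Sgot_*$ which are $\c$-linearly independent (hence a basis of $\c^n$) and expand $\alpha'(t)/\vartheta(t)=\sum_{j=1}^n c_j(t)\,w_j$ with continuous coefficients $c_j\colon I\to\c$.

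For the construction of $g_0$, I would take a grid on $I$ that is uniform on $[\delta,1]$ and geometrically refined (for instance by dyadic intervals accumulating at $0$) on $[0,\delta]$, and on each cell of the grid let $g_0$ cycle continuously through nonzero scalar multiples of $w_1,\dots,w_n$ within the connected smooth manifold $\Sgot_*$. These multiples are calibrated so that each cell's time-integral against $\vartheta$ matches the corresponding prescribed increment of $\alpha$; when some coefficient $c_j$ vanishes on a cell, one replaces it by $c_j\pm\epsilon$ in two consecutive sub-windows and cancels the error. Short continuous transitions in $\Sgot_*$ are inserted between consecutive windows and at the endpoints to join to $u_0$ and $u_1$. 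For a sufficiently fine grid this yields a continuous $g_0\colon I\to\Sgot_*$ with $g_0(0)=u_0$, $g_0(1)=u_1$, and
\[
\sup_{s\in I}\Big|\int_0^s g_0(t)\vartheta(t)\,dt-\alpha(s)\Big|<\rho/2,
\]
and (thanks to the geometric refinement near $0$) with the property that every open subinterval of $[0,\delta]$ contains a full cycle through the basis $w_1,\dots,w_n$, so that $g_0$ is nowhere flat on $[0,\delta]$.

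Next I would apply Lemma \ref{lem:path-sprays} to $g_0$ with a closed subinterval $I'$ properly contained in $(0,\delta)$. This produces continuous $h_1,\dots,h_N\colon I\to\c$ supported in $I'$ (hence vanishing at $0$ and $1$) and a neighborhood $U$ of $0\in\c^N$ such that the period map
\[
\Pcal(\zeta)=\int_0^1 \phi^1_{\zeta_1 h_1(t)}\circ\cdots\circ\phi^N_{\zeta_N h_N(t)}(g_0(t))\,\vartheta(t)\,dt
\]
is a submersion at $\zeta=0$, and hence $\Pcal(U)$ contains a Euclidean neighborhood of $\Pcal(0)=\int_0^1 g_0\vartheta$. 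By refining the approximation in the previous paragraph $x$ lies in $\Pcal(U)$; pick $\zeta^*\in U$ close to $0$ with $\Pcal(\zeta^*)=x$ and set
\[
g(t):=\phi^1_{\zeta^*_1 h_1(t)}\circ\cdots\circ\phi^N_{\zeta^*_N h_N(t)}(g_0(t)).
\]
Then $g\colon I\to\Sgot_*$ is continuous, $g(0)=u_0$ and $g(1)=u_1$ (as all $h_j$ vanish at both endpoints), $\int_0^1 g\vartheta=\Pcal(\zeta^*)=x$, and $g\equiv g_0$ on a neighborhood of $0$ (where all $h_j$ vanish), so $g$ inherits nonflatness there. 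Taking $|\zeta^*|$ small enough that $\|g-g_0\|_{\infty,I}\cdot\|\vartheta\|_{\infty,I}<\rho/2$ keeps the running integral of $g$ within $\rho$ of $\alpha$, hence inside $\Omega$, for all $s\in I$.

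The delicate step is the second paragraph: producing a continuous path in $\Sgot_*\subset\c^n$ whose running integral tracks a prescribed smooth path, without ever landing at $0\notin\Sgot_*$, and in such a way that every small open interval near $t=0$ meets all $n$ basis directions. This is exactly where the assumption that $\Sgot$ is not contained in any hyperplane of $\c^n$, combined with the conicity and connectedness of $\Sgot_*$, is essential: it furnishes the basis $w_1,\dots,w_n\subset\Sgot_*$ and lets the continuous transitions between their scalar multiples be carried out inside $\Sgot_*$.
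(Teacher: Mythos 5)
Your overall strategy is sound and runs parallel to the paper's: build a preliminary path whose running integral tracks a curve in $\Omega$ from $0$ to $x$, then use the period-dominating spray of Lemma \ref{lem:path-sprays} supported near $t=0$ to correct the total integral exactly (the paper tracks a polygonal path whose segment directions lie in $\Sgot_*$ itself, using that the convex hull of $\Sgot$ is $\c^n$, rather than decomposing $\alpha'$ in a fixed basis $w_1,\dots,w_n\subset\Sgot_*$; both mechanisms are legitimate). However, there is a genuine gap in your argument for the nowhere-flatness of $g_0$ near $0$. The property you claim — that \emph{every} open subinterval of $[0,\delta]$ contains a full cycle through $w_1,\dots,w_n$ — is impossible, no matter how the grid is refined: an open interval $(a,b)$ with $0<a<b$ and $b/a$ close to $1$ sits inside a single cell, indeed inside a single sub-window, where your $g_0$ takes values in the complex line $\c w_j$ and is therefore flat in the sense of Definition \ref{def:degenerateI}. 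Geometric refinement at $0$ only guarantees that intervals \emph{containing} $0$ meet all directions, whereas nowhere flatness quantifies over all nonempty open subsets. Since Lemma \ref{lem:path-sprays} requires $f$ nowhere flat on the chosen $I'$, and the conclusion of the present lemma requires nowhere flatness on a full neighborhood of $0$, this is not cosmetic. The fix is easy and is what the paper does: on $[0,\delta]$ there is nothing to track (the guide path is still near $0\in\Omega$ there), so one may simply take any continuous nowhere flat path $\wt g_0$ in $\Sgot_*$ starting at $u_0$ and multiply it by a positive scalar function decaying quickly to a small constant, which keeps the running integral near $0$ and preserves nowhere flatness; reserve the cycling/tracking construction for $[\delta,1]$.

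A second, minor point of order of quantifiers: the radius $\epsilon$ of the ball contained in $\Pcal(U)$ is produced by the spray and hence depends on $g_0$, so "refine the approximation so that $x\in\Pcal(U)$" is circular as written. It is salvageable because the $h_j$ are supported in $I'\subset(0,\delta)$, so $\Pcal(\zeta)-\Pcal(0)$ depends only on $g_0|_{I'}$; you should therefore fix $g_0|_{[0,\delta]}$ and the spray first (determining $\epsilon$), and only then choose the grid on $[\delta,1]$ fine enough that $|\int_0^1 g_0\vartheta-x|<\epsilon$. With these two repairs the proof goes through.
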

\begin{proof}
Set $I_0:=[0,\frac12]$ and choose any continuous nowhere flat map $g_0\colon I_0\to\Sgot_*$ such that 
\begin{equation}\label{eq:g0}
      g_0(0)=u_0,\quad \int_0^s g_0(t)\vartheta(t)\, dt\in \Omega\text{ for all $s\in I_0$}. 
\end{equation}
Such a map can be constructed as follows. For any $0<\delta<\frac12$ let $f_\delta\colon I_0\to [\delta,1]$ be the continuous map given by $f_\delta(s)=1-\frac{1-\delta}{\delta}s$ for $s\in [0,\delta]$ and $f_\delta(s)=\delta$ for $s\in [\delta,\frac12]$. Choose any continuous nowhere flat map $\wt g_0\colon I_0\to \Sgot_*$ with $\wt g_0(0)=u_0$. Then $g_0:=f_\delta \wt g_0\colon I_0\to\Sgot_*$ satisfies the requirements for any $\delta>0$ sufficiently small. 

Let $\emptyset \neq I'\subset \mathring I_0$ be a closed subinterval. Thus, Lemma \ref{lem:path-sprays} applied to $g_0$ provides continuous functions $h_1,\ldots,h_N\colon I\to\c$, with support on $I'$, and a neighborhood $U$ of the origin in $\c^N$, such that the period map
\[
    U\ni\zeta\mapsto \Pcal(\zeta)=\int_0^{\frac12} \phi^1_{\zeta_1h_1(t)}\circ \cdots \circ \phi^N_{\zeta_Nh_N(t)}(g_0(t))\vartheta(t)\, dt,\quad     
    \zeta=(\zeta_1,\ldots,\zeta_N)\in\c^N,
\]
has maximal rank equal to $n$ at $\zeta=0$. (See \eqref{eq:CartanA1}.) Set
\[
    \Phi(\zeta,t):=\phi^1_{\zeta_1h_1(t)}\circ \cdots \circ \phi^N_{\zeta_Nh_N(t)}(g_0(t))\in\Sgot,\quad \zeta\in U,\; t\in I_0,
\]
and observe that $\Phi(0,t)=g_0(t)\in\Sgot_*$ for all $t\in I_0$. Then, up to shrinking $U$ if necessary, we have that:
\begin{enumerate}[\rm (a)]
\item $\Phi(U\times I_0)\subset \Sgot_*$ and $\Pcal(U)$ contains a ball in $\c^n$ with radius $\epsilon>0$ centered at $\Pcal(0)=\int_0^{\frac12} g_0(t)\vartheta(t)\, dt\in\Omega$, see \eqref{eq:g0}.
\vspace*{1mm}
\item $\Phi(\zeta,t)=g_0(t)$ for all $(\zeta,t)\in U\times\{0,\frac12\}$; recall that $h_j(0)=h_j(\frac12)=0$ for all $j=1,\ldots,N$.
\vspace*{1mm}
\item $\int_0^s \Phi(\zeta,t)\vartheta(t)\, dt\in\Omega$ for all $\zeta\in U$ and $s\in I_0$; see \eqref{eq:g0}.
\end{enumerate}

To conclude the proof we adapt the argument in \cite[Lemma 7.3]{AlarconForstneric2014IM}. Since the convex hull of $\Sgot$ is $\c^n$ (cf.\ \cite[Lemma 3.1]{AlarconForstneric2014IM}) we may construct a polygonal path $\Gamma\subset\Omega$ connecting $\Pcal(0)$ and $x$; to be more precise, $\Gamma=\bigcup_{j=1}^m \Gamma_j$ where each $\Gamma_j$ is a segment of the form $\Gamma_j= w_j+[0,1]z_j$ for some $w_j\in\c^n$ and $z_j\in \Sgot_*$, the initial point $w_1$ of $\Gamma_1$ is $\Pcal(0)$, the final point $w_m+z_m$ of $\Gamma_m$ is $x$, and the initial point $w_j$ of $\Gamma_j$ agrees with the final one $w_{j-1}+z_{j-1}$ of $\Gamma_{j-1}$ for all $j=2,\ldots,m$. 
Set
\[
     I_j:=\left[\frac12+\frac{j-1}{2m},\frac12+\frac{j}{2m}\right],\quad j=1,\ldots,m,
\] 
and observe that $\bigcup_{j=1}^m I_j=[\frac12,1]$.
For any number $0<\lambda<\frac1{4m}$, set 
\[
     I_j^\lambda:=\left[\frac12+\frac{j-1}{2m}+\lambda,\frac12+\frac{j}{2m}-\lambda\right]\subset I_j,\quad j=1,\ldots,m.
\]
Without loss of generality we may assume that $m\in\n$ is large enough so that 
\begin{equation}\label{eq:bj}
      b_j(\lambda):=\int_{I_j^\lambda}
      \vartheta(t)\, dt\neq 0\quad \text{for all $0<\lambda<\frac1{4m}$,\; $j=1,\ldots,m$};
\end{equation}
recall that $\vartheta$ has no zeroes. Fix a number $0<\lambda<\frac1{4m}$ and set $b_j:=b_j(\lambda)$. Pick a constant $\kappa>\max\{|u_0|,|u_1|,|z_1/b_1|,\ldots,|z_m/b_m|\}$.  Also choose numbers $0<\tau<\mu<\lambda$, which will be specified later, and
consider a continuous map $g_1\colon [\frac12,1]\to\Sgot_*$ satisfying the following conditions:
\begin{enumerate}[\rm (a)]
\item[\rm (d)] $g_1(\frac12)=g_0(\frac12)$ and $g_1(1)=u_1$.
\item[\rm (e)] $g_1(t)=z_j/b_j$ for all $t\in I_j^\lambda$.
\item[\rm (f)] $|g_1(t)|\le \kappa$ for all $t\in [\frac12,1]$.
\item[\rm (g)] $|g_1(t)|\le \tau$ for all $t\in I_j^\tau\setminus I_j^\mu$.
\end{enumerate} 
If $\tau>0$ is chosen sufficiently small, and if $\mu$ is close enough to $\lambda$, then  {\rm (e)}, {\rm (f)}, {\rm (g)}, and \eqref{eq:bj} ensure that 
\begin{enumerate}[\rm (a)]
\item[\rm (h)] the image of the map $[\frac12,1]\ni s\mapsto \Pcal(0)+\int_{\frac12}^s g_1(t)\vartheta(t)\, dt$ is close enough to $\Gamma$ in the Hausdorff distance so that it lies in $\Omega$ and
\item[\rm (i)] $|\Pcal(0)+\int_{\frac12}^1 g_1(t)\vartheta(t)\, dt-x|<\epsilon$, where $\epsilon>0$ is the number appearing in {\rm (a)}.
\end{enumerate}

For $\zeta\in U$, let $g^\zeta\colon I\to\Sgot_*$ denote the function given by $g^\zeta(t)=\Phi(\zeta,t)$ for $t\in [0,\frac12]$ and $g^\zeta(t)=g_1(t)$ for $t\in [\frac12,1]$. Properties {\rm (a)} and {\rm (i)} guarantee the existence of $\zeta_0\in U$ such that $\int_0^{\frac12} g^{\zeta_0}(t)\vartheta(t)\, dt=x-\int_{\frac12}^1 g_1(t)\vartheta(t)\, dt$, and so $\int_0^1 g^{\zeta_0}(t)\vartheta(t)\, dt=x$. Thus $g:=g^{\zeta_0}$ meets {\it iii)}. By \eqref{eq:g0}, {\rm (b)}, and {\rm (d)}, we have that $g$ is continuous and satisfies {\it i)}, whereas {\rm (c)} and {\rm (h)} ensure {\it ii)}. This concludes the proof.
\end{proof}


\subsection{Paths on open Riemann surfaces}\label{sec:pathsRiemann}
 
Let us now state and prove the main result of this section; recall that we are using Notation \ref{not:}.

\begin{lemma}\label{lem:pathRiemann}
Let $M$ be an open Riemann surface and let $\theta$ be a holomorphic $1$-form vanishing nowhere on $M$. Let $p_0\in M$ be a point, $C_1,\ldots,C_l$ $(l\in\n)$ be a family of oriented Jordan arcs or closed curves in $M$ that only meet at $p_0$ (i.e. $C_i\cap C_j=\{p_0\}$ for all $i\neq j\in \{1,\ldots,l\}$) and such that $C:=\bigcup_{i=1}^l C_i$ is Runge in $M$.
Also let $f\colon C\to\Sgot_*$ be a continuous map and assume that for each $i\in\{1,\ldots,l\}$ there exists a subarc $\wt C_i\subset C_i$ such that $f$ is nowhere flat on $\wt C_i$.  
Then there exist continuous functions $h_{i,1},\ldots,h_{i,N}\colon C\to\c$, with support on $\wt C_i$, $i=1,\ldots,l$, and a neighborhood $U$ of $0\in(\c^N)^l$ such that the {\em period map} $U\to(\c^n)^l$ whose $i$-th component $U\to\c^n$ is given by
\[
    U\ni\zeta\mapsto  \int_{C_i}\phi^1_{\zeta_1^1h_{1,1}(p)}\circ \cdots \circ \phi^N_{\zeta_N^1h_{1,N}(p)}\circ\cdots\circ \phi^1_{\zeta_1^lh_{l,1}(p)}\circ \cdots \circ \phi^N_{\zeta_N^lh_{l,N}(p)}(f(p)) \theta
\]
(see \eqref{eq:CartanA} and \eqref{eq:CartanA1}), where
\[
     \zeta=(\zeta^1,\ldots,\zeta^l)\in (\c^N)^l,\quad \zeta^i=(\zeta^i_1,\ldots,\zeta^i_N)\in\c^N,
\]
are holomorphic coordinates, is well defined and has maximal rank equal to $nl$ at $\zeta=0$.
\end{lemma}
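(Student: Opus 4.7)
The plan is to reduce the statement to $l$ independent applications of Lemma \ref{lem:path-sprays}, one for each curve $C_i$, and then exploit the fact that the supports of the functions $h_{i,j}$ are pairwise disjoint (up to the single point $p_0$) to show that the differential of the period map at $\zeta=0$ is block diagonal.

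First I would fix, for each $i\in\{1,\ldots,l\}$, a smooth parametrization $\gamma_i\colon I\to C_i$ (or $\gamma_i\colon\r/\z\to C_i$ when $C_i$ is a closed Jordan curve; this is harmless) and write the pullback of $\theta$ in the form $\vartheta_i(t)\,dt$. Since $\theta$ is nowhere zero and $\gamma_i$ is a smooth immersion, $\vartheta_i\colon I\to\c_*$ is continuous and nowhere zero. Choose, for each $i$, a closed subinterval $I'_i\subset I$ whose image under $\gamma_i$ is contained in $\wt C_i\setminus\{p_0\}$, a set on which $f\circ\gamma_i$ is nowhere flat.

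Next, I apply Lemma \ref{lem:path-sprays} independently to the pair $(f\circ\gamma_i,\vartheta_i)$ with distinguished subinterval $I'_i$. This yields, for each $i$, continuous functions $\tilde h_{i,1},\ldots,\tilde h_{i,N}\colon I\to\c$ supported in $I'_i$, and a neighborhood $U_i$ of $0\in\c^N$, such that the period map
\[
   U_i\ni\xi\mapsto \int_0^1\phi^1_{\xi_1\tilde h_{i,1}(t)}\circ\cdots\circ\phi^N_{\xi_N\tilde h_{i,N}(t)}(f\circ\gamma_i(t))\,\vartheta_i(t)\,dt
\]
is well defined and has maximal rank $n$ at $\xi=0$. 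Transferring back to $M$ via $\gamma_i^{-1}$, I set $h_{i,j}:=\tilde h_{i,j}\circ\gamma_i^{-1}$ on $C_i$ and extend by $0$ to the rest of $C$; the resulting $h_{i,j}\colon C\to\c$ is continuous because its support lies in the compact set $\gamma_i(I'_i)\subset\wt C_i\setminus\{p_0\}$, which is disjoint from $C_k$ for all $k\neq i$.

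The key observation is now the decoupling of the full period map. For any $p\in C_i$ with $p\neq p_0$, every $h_{k,j}(p)$ with $k\neq i$ vanishes, and since $\phi^j_0=\mathrm{id}$ the entire composition of flows collapses to
\[
   \phi^1_{\zeta_1^i h_{i,1}(p)}\circ\cdots\circ\phi^N_{\zeta_N^i h_{i,N}(p)}(f(p)).
\]
(The single point $p_0$ is negligible for the integral.) Consequently, the $i$-th component of the global period map depends only on $\zeta^i\in\c^N$ and coincides, under the parametrization $\gamma_i$, with the period map provided by Lemma \ref{lem:path-sprays} applied to $C_i$. Therefore the Jacobian at $\zeta=0$ with respect to $\zeta=(\zeta^1,\ldots,\zeta^l)$ is block diagonal with $i$-th block of rank $n$, so its total rank equals $nl$. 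Taking $U:=U_1\times\cdots\times U_l$ (possibly shrunken so that the composition of flows remains well defined and takes values in $\Sgot_*$) concludes the proof. The only mildly subtle point is ensuring that the supports of the $h_{i,j}$ stay away from $p_0$, which is why I shrank $I'_i$ so that $\gamma_i(I'_i)\subset\wt C_i\setminus\{p_0\}$; this is possible because $\wt C_i$ is an arc and hence contains a nontrivial subarc avoiding any prescribed single point.
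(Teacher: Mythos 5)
Your proof is correct and follows essentially the same route as the paper's: parametrize each $C_i$, apply Lemma~\ref{lem:path-sprays} on a closed subinterval whose image avoids $p_0$ and lies in $\wt C_i$, push the resulting functions forward to $C$ extended by zero, and observe that the disjointness of the supports makes the differential of the full period map at $\zeta=0$ block diagonal with each block of rank $n$. The paper phrases the avoidance of $p_0$ via $I_i\subset\mathring I$ together with $h^i_j(0)=0$, while you impose $\gamma_i(I'_i)\subset\wt C_i\setminus\{p_0\}$ directly, but these are the same device.
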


\begin{proof}

Consider the period map $\Pcal=(\Pcal_1,\ldots,\Pcal_l)\colon \Cscr(C,\c^n)\to (\c^n)^l$ whose $i$-th component is defined by
\begin{equation}\label{eq:PerLem}
\Cscr(C,\c^n)\ni g\mapsto \Pcal_i(g)=\int_{C_i}g\theta,\quad i=1,\dots,l.
\end{equation}

For each $i=1,\ldots,l$, let $\gamma_i\colon I=[0,1]\to C_i$ be a smooth parameterization of $C_i$ such that $\gamma_i(0)=p_0$. If $C_i$ is closed then we choose $\gamma_i$ with $\gamma_i(1)=p_0$; further, up to changing the orientation of $C_i$ if necessary, we assume that the parameterization $\gamma_i$ is compatible with the orientation of $C_i$. Thus,
\begin{equation}\label{eq:PerLemPa}
\Pcal_i(g)=\int_0^1g(\gamma_i(t)) \theta (\gamma_i(t),\dot \gamma_i(t)) \, dt,\quad g\in \Cscr(C,\c^n).
\end{equation} 

Let $\emptyset\neq I_i\subset \mathring I$ be a closed interval such that $\gamma_i(I_i)\subset\wt C_i$. Lemma \ref{lem:path-sprays} applied to $I_i$, $f\circ \gamma_i$, and $\theta(\gamma_i(\cdot),\dot\gamma_i(\cdot))$ provides continuous functions $h_1^i,\ldots,h_N^i\colon I\to\c$, supported on $I_i$, and a neighborhood $U_i$ of $0\in \c^N$ such that the period map $\Psf_i\colon U_i\to\c^n$ given, for any $\zeta^i=(\zeta_1^i,\ldots,\zeta_{N_i}^i)\in U_i$, by
\begin{equation}\label{eq:Psfi}
      \Psf_i(\zeta^i)=\int_0^1 \phi^1_{\zeta_1^ih_1^i(t)}\circ \cdots \circ \phi^N_{\zeta_N^ih_N^i(t)}(f(\gamma_i(t)))\theta(\gamma_i(t),\dot\gamma_i(t))\, dt
\end{equation}
(see \eqref{eq:CartanA} and \eqref{eq:CartanA1}),
is well defined and has maximal rank equal to $n$ at $\zeta^i=0$. Let $U$ be a ball centered at the origin of $(\c^N)^l$ and contained in $U_1\times\cdots\times U_l$.
For each $i\in\{1,\ldots, l\}$ and $j=1,\ldots, N$, we define $h_{i,j}\colon C\to\c$ by $h_{i,j}(\gamma_i(t))=h^i_j(t)$ for all $t\in I$, and $h_{i,j}(p)=0$ for all $p\in C\setminus C_i$. Recall that $h_j^i(0)=0$ and so $h_{i,j}$ is continuous and $h_{i,j}(p_0)=0$.
Define $\Phi\colon U\times C\to\Sgot$ by
\[
\Phi(\zeta,p)=\phi^1_{\zeta_1^1h_{1,1}(p)}\circ \cdots \circ \phi^N_{\zeta_N^1h_{1,N}(p)}\circ\cdots\circ \phi^1_{\zeta_1^lh_{l,1}(p)}\circ \cdots \circ \phi^N_{\zeta_N^lh_{l,N}(p)}(f(p)),
\]
and, up to shrinking $U$ if necessary, assume that $\Phi(U\times C)\subset \Sgot_*$. 

Let $\Psf\colon U\to (\c^n)^l$ be the period map whose $i$-th component $U\to\c^n$, $i=1,\ldots,l$, is given by
\[
U\ni \zeta \mapsto \int_{C_i} \Phi(\zeta,\cdot) \theta =\Psf_i(\zeta^i),\quad \zeta=(\zeta^1,\ldots,\zeta^l)\in U;
\]
see \eqref{eq:Psfi} and recall that $h_{i,j}$ vanishes everywhere on $C\setminus C_i$. It follows that $\Psf$ has maximal rank equal to $nl$ at $\zeta=0$. This complete the proof.
%
\end{proof}


\section{Jet-interpolation with approximation 
}\label{sec:jet}  

We begin this section with some preparations.

\begin{definition}\label{def:simple}
Let $M$ be an open Riemann surface.
An admissible subset $S=K\cup\Gamma\subset M$ (see Definition \ref{def:admissible}) will be said {\em simple} if $K\neq\emptyset$, every component of $\Gamma$ meets $K$, $\Gamma$ does not contain closed Jordan curves, and every closed Jordan curve in $S$ meets only one component of $K$. Further, $S$ will be said {\em very simple} if it is simple, $K$ has at most one non-simply connected component $K_0$, which will be called the {\em kernel component} of $K$, and every component of $\Gamma$ has at least one endpoint in $K_0$; in this case we denote by $S_0$ the component of $S$ containing $K_0$ and call it the {\em kernel component} of $S$.
\end{definition}

A connected admissible subset $S=K\cup\Gamma$ in an open Riemann surface $M$ is very simple if, and only if, $K$ has $m\in\n$  components $K_0,\ldots, K_{m-1}$, where $K_i$ is simply-connected for every $i>0$, and $\Gamma=\Gamma'\cup\Gamma''\cup(\bigcup_{i=1}^{m-1} \gamma_i)$ where $\Gamma'$ consists of components of $\Gamma$ with both endpoints in $K_0$, $\Gamma''$ consists of components of $\Gamma$ with an endpoint in $K_0$ and the other one in $M\setminus K$, and $\gamma_i$ is a component of $\Gamma$ connecting $K_0$ and $K_i$ for all $i=1,\ldots,m-1$. Observe that, in this case, $K_0\cup \Gamma'$ is a strong deformation retract of $S$.
In general, a very simple admissible subset $S\subset M$ is of the form $S=(K\cup\Gamma)\cup K'$ where $K\cup\Gamma$ is a connected very simple admissible subset and $K'\subset M\setminus (K\cup\Gamma)$ is a (possibly empty) union of finitely many pairwise disjoint smoothly bounded compact disks. (See Figure \ref{fig:admissibleverysimple}.)
\begin{figure}[ht]
	\includegraphics[width=12cm]{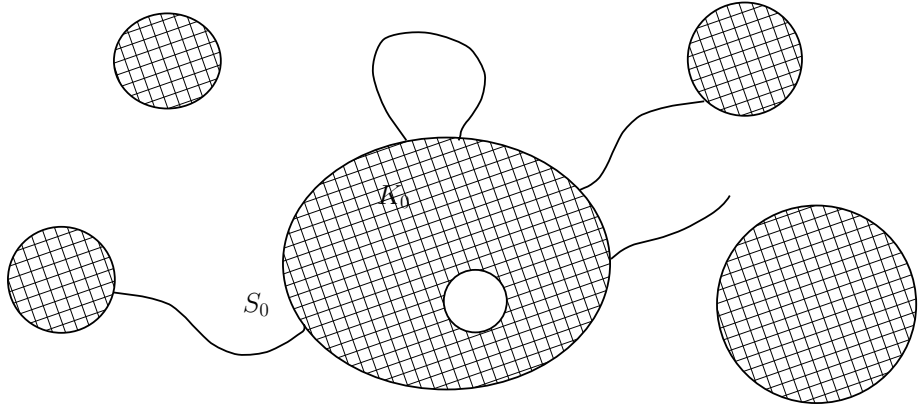}
	\caption{A very simple admissible set.}\label{fig:admissibleverysimple}
\end{figure}

If $S=K\cup\Gamma\subset M$ is admissible, we denote by $\Ascr(S)$ the space of continuous functions $S\to\c$ which are of class $\Ascr(K)$. Likewise, we define the space $\Ascr(S,Z)$ for maps to any complex manifold $Z$.

In the remainder of this section we use Notation \ref{not:}.
%
%
\begin{lemma}\label{lem:main-noncritical}
Let $M$ be an open Riemann surface and $\theta$ be a holomorphic $1$-form vanishing nowhere on $M$. Let $S=K\cup\Gamma\subset M$ be a very simple admissible subset and $L\subset M$ be a smoothly bounded compact domain such that $S\subset\mathring L$ and the kernel component $S_0$ of $S$ is a strong deformation retract of $L$ (see Definition \ref{def:simple}). Denote by $l'\in\z_+$ the dimension of the first homology group $H_1(S;\z)=H_1(S_0;\z)\cong H_1(L;\z)$.
Let $K_0,\ldots, K_m$, $m\in\z_+$ denote the components of $K$ contained in $S_0$, where $K_0$ is the kernel component of $K$. 

Let $m'\in\z_+$, $m'\ge m$, and let $p_0,\ldots, p_{m'}$ be distinct points in $S$ such that $p_i\in \mathring K_i$ for all $i=0,\ldots,m$ and $p_i\in\mathring K_0$ for all $i=m+1,\ldots, m'$, and let $C_i$, $i=1,\ldots,m'$, be pairwise disjoint oriented Jordan arcs in $S$ with initial point $p_0$ and final point $p_i$.  Set $l:=l'+m'$. Also let $C_i$, $i=m'+1,\ldots,l$, be smooth Jordan curves in $S$ determining a homology basis of $S$ and such that $C_i\cap C_j=\{p_0\}$ for all $i\neq j\in\{1,\ldots,l\}$ and $C:=\bigcup_{i=1}^l C_i$ is Runge in $M$. (See Figure \ref{fig:admissiblepathperiods}.)

Given $k\in\n$ and a map $f\colon S\to\Sgot_*\subset\c^n$ of class $\Ascr(S)$ which is nonflat on $\mathring K_0$ (see Definition \ref{def:degenerateI}), the following hold:
\begin{enumerate}[\it i)]
\item There exist functions $h_{i,1},\ldots,h_{i,N}\colon L\to\c$, $i=1,\ldots,l$, of class $\Ascr(L)$ and a neighborhood $U$ of $0\in(\c^N)^l$ such that: 
\begin{enumerate}[\it {i}.1)]
\item $h_{i,j}$ has a zero of multiplicity $k$ at $p_r$ for all $j=1,\ldots,N$ and $r=1,\ldots,m'$.
\item Denoting by $\Phi_f\colon U\times S\to\Sgot$ the map
\[
      \Phi_f(\zeta,p)=\phi^1_{\zeta_1^1h_{1,1}(p)}\circ \cdots \circ \phi^N_{\zeta_N^1h_{1,N}(p)}\circ\cdots\circ \phi^1_{\zeta_1^lh_{l,1}(p)}\circ \cdots \circ \phi^N_{\zeta_N^lh_{l,N}(p)}(f(p)),
\]
(see \eqref{eq:CartanA} and \eqref{eq:CartanA1}), where
$\zeta=(\zeta^1,\ldots,\zeta^l)\in (\c^N)^l$ and $\zeta^i=(\zeta^i_1,\ldots,\zeta^i_N)\in\c^N$,
are holomorphic coordinates, the {\em period map} $U\to(\c^n)^l$ whose $i$-th component $U\to\c^n$ is given by
\[
    U\ni\zeta\mapsto  \int_{C_i}\Phi_f(\zeta,\cdot) \theta
\]
has maximal rank equal to $nl$ at $\zeta=0$.
\end{enumerate}
Furthermore, there is a neighborhood $V$ of $g\in \Ascr(S,\Sgot_*)$ such that the map $V\ni g\mapsto\Phi_g$ can be chosen to depend holomorphically on $g$.

\medskip

\item If $\Sgot_*$ is an Oka manifold, then $f$ may be approximated uniformly on $S$ by maps $\wt f\colon L\to\Sgot_*$ of class $\Ascr(L)$ such that:
\begin{enumerate}[\it {ii}.1)]
\item $(\wt f-f)\theta$ is exact on $S$, equivalently, $\displaystyle \int_{C_r} (\wt f-f)\theta=0$ for all $r=m'+1,\ldots,l$.
\item $\displaystyle \int_{C_r} (\wt f-f)\theta=0$ for all $r=1,\ldots,m'$.
\item $\wt f-f$ has a zero of multiplicity $k$ at $p_r$ for all $r=1,\ldots,m'$.
\item No component function of $\wt f$ vanishes everywhere on $M$.
\end{enumerate}
\end{enumerate}
\end{lemma}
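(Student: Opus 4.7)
The strategy is to bootstrap from Lemma \ref{lem:pathRiemann} to obtain part (i) by using Theorem \ref{th:MTJI} (Mergelyan with jet-interpolation) to promote the continuous functions produced there to holomorphic functions on $L$ with the prescribed vanishing at the $p_r$, and then to combine part (i) with the Oka property of $\Sgot_*$ and the implicit function theorem to obtain part (ii).

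For part (i), since $f$ is holomorphic and nonflat on the connected open set $\mathring K_0$, and each arc $C_i$ passes through $p_0\in\mathring K_0$, I first choose for every $i$ a compact subarc $\wt C_i\subset C_i\cap \mathring K_0$, disjoint from $\{p_1,\ldots,p_{m'}\}$, on which $f$ is nowhere flat (by analyticity and nonflatness of $f$ on $\mathring K_0$, a short generic subarc of $C_i$ near $p_0$ does the job). Lemma \ref{lem:pathRiemann} applied with these data furnishes continuous $\tilde h_{i,j}\colon C\to\c$, supported on $\wt C_i$, such that the associated continuous period map has maximal rank $nl$ at the origin. Because each $\supp(\tilde h_{i,j})$ is disjoint from every $p_r$, the function $\tilde h_{i,j}$ vanishes in a neighborhood of each $p_r$ in $C$. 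I then apply Theorem \ref{th:MTJI} with target $\c$ to the admissible subset $C\subset L$ and the finite interpolation set $\{p_1,\ldots,p_{m'}\}\subset \mathring K$; this produces $h_{i,j}\in\Ascr(L)$ uniformly close to $\tilde h_{i,j}$ on $C$ and with vanishing $k$-jet at every $p_r$, giving (i.1). The resulting holomorphic period map depends continuously on the $h_{i,j}$ in the $C^0(C)$-norm, so openness of the maximal-rank condition ensures that a sufficiently close approximation preserves the rank equality at $\zeta=0$, establishing (i.2). The holomorphic dependence of $\Phi_g$ on $g$ near $f$ in $\Ascr(S,\Sgot_*)$ is immediate from the holomorphic dependence of the flows $\phi^j_s$ on their initial conditions.

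For part (ii), since $\Sgot_*$ is an Oka manifold, Theorem \ref{th:MTJI} applied to $f\colon S\to\Sgot_*$ approximates $f$ uniformly on $S$ by holomorphic $f_0\in\Ascr(L,\Sgot_*)$ sharing the $k$-jet of $f$ at every $p_r$; in particular $f_0-f$ has a zero of multiplicity $k$ at each $p_r$. Applying part (i) to $f_0$ yields the spray $\Phi_{f_0}$, and I consider the holomorphic map
\[
\Qcal\colon U\to(\c^n)^l,\qquad \Qcal_i(\zeta):=\int_{C_i}\bigl(\Phi_{f_0}(\zeta,\cdot)-f\bigr)\theta,\quad i=1,\ldots,l.
\]
Part (i) gives that $d\Qcal|_0$ is surjective, and $\Qcal(0)=\bigl(\int_{C_i}(f_0-f)\theta\bigr)_i$ is small since $f_0$ is close to $f$ on $C$. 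The implicit function theorem then produces $\zeta^*$ close to $0$ with $\Qcal(\zeta^*)=0$; setting $\wt f:=\Phi_{f_0}(\zeta^*,\cdot)$ immediately yields (ii.1) and (ii.2). Condition (ii.3) follows from iterated application of Lemma \ref{lem:jet} to the composed flows: since every $h_{i,j}$ has a zero of multiplicity $k$ at each $p_r$, the map $\wt f$ inherits the $k$-jet of $f_0$, and hence of $f$, at every $p_r$. For (ii.4), the set of $\zeta\in U$ on which some component of $\Phi_{f_0}(\zeta,\cdot)$ vanishes identically on $L$ is a proper complex-analytic subset, and since $d\Qcal|_0$ has a nontrivial kernel (of dimension $(N-n)l>0$, taking $N>n$ in Notation \ref{not:}), I may perturb $\zeta^*$ inside the fibre $\Qcal^{-1}(0)$ to avoid it. The main obstacle lies in part (i): reconciling the vanishing of the $h_{i,j}$ to order $k$ at the points $p_r$ with preservation of the maximal-rank property of the period map. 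The key observation that unlocks it is that the supports of the $h_{i,j}$ can be localized entirely inside $\mathring K_0$ and disjoint from the $p_r$, so the jet conditions at the $p_r$ become entirely decoupled from the period-dominating construction.
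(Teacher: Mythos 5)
Your part (i) and conditions (ii.1)--(ii.3) follow essentially the same route as the paper: choose subarcs $\wt C_i\subset\mathring K_0$ off the interpolation points, invoke Lemma \ref{lem:pathRiemann} to get continuous period-dominating functions, promote them to $\Ascr(L)$ functions with the prescribed jets at the $p_r$ via Theorem \ref{th:MTJI}, and use openness of the maximal-rank condition; then approximate $f$, form the spray, and use surjectivity of the period differential to hit the required period vector. Your reordering in part (ii) --- approximate first to get $f_0$, then build the spray $\Phi_{f_0}$ --- is a harmless rearrangement of the paper's sequence (the paper first builds the spray $\Phi$ for $f$, then approximates $f$ by $\wh f$, then rebuilds $\wh\Phi$); both rely on the same stability of the maximal-rank condition.

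However, your argument for \emph{(ii.4)} has a genuine gap. You claim that the set $B\subset U$ of parameters $\zeta$ for which some component of $\Phi_{f_0}(\zeta,\cdot)$ vanishes identically on $L$ is a proper analytic subset, and that you may perturb $\zeta^*$ inside the fibre $\Qcal^{-1}(0)$ to avoid it. Even granting that $B$ is a proper analytic subset of $U$, the conclusion does not follow: $B$ being proper in $U$ does not prevent the submanifold $\Qcal^{-1}(0)$ (which is a small positive-dimensional piece of $U$ near $\zeta^*$) from lying entirely inside $B$, in which case no perturbation inside the fibre escapes it. You would need a separate argument that $B\cap\Qcal^{-1}(0)$ is a proper analytic subset of $\Qcal^{-1}(0)$, which you do not provide. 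The paper avoids this issue altogether by a preliminary reduction: it enlarges $S$ by a smoothly bounded compact disk $D\subset M\setminus S$ and extends $f$ over $D$ so that no component of $f$ is identically zero on $S$; then any sufficiently close uniform approximation of $f$ on $S$ automatically has no identically vanishing component on $L$, and (ii.4) holds for free. The same trick would patch your proof --- perform the disk enlargement before applying Theorem \ref{th:MTJI} to produce $f_0$, so that $f_0$ already has no vanishing component, and then uniform closeness of $\wt f=\Phi_{f_0}(\zeta^*,\cdot)$ to $f_0$ carries this over --- but as written the step is unjustified.
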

\begin{figure}[ht]
	\includegraphics[width=12cm]{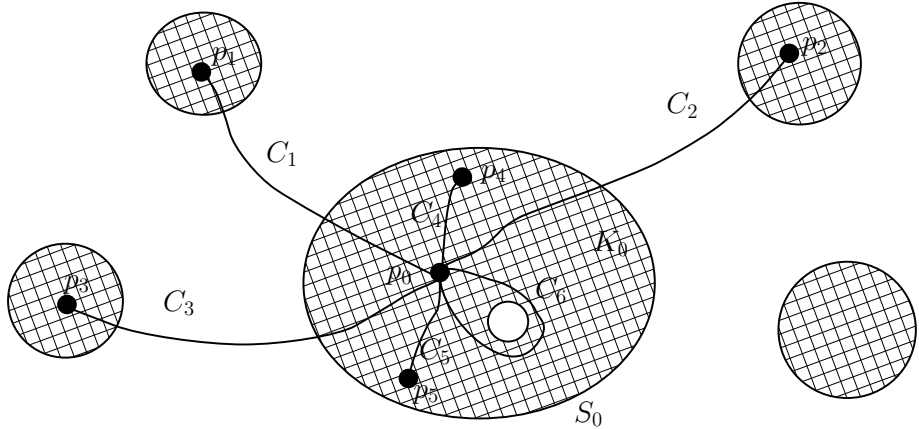}
	\caption{The sets in Lemma \ref{lem:main-noncritical}}\label{fig:admissiblepathperiods}
\end{figure}
Notice that conditions {\it {ii}.1)} and {\it {ii}.2)} in the above lemma may be written as a single one in the form
\[
\int_{C_r} (\wt f-f)\theta=0\quad \text{for all $r=1,\ldots,l$}.
\]
However, we write them separately with the aim of emphasizing that they are useful for different purposes; indeed, {\it {ii}.1)} concerns the period problem whereas {\it {ii}.2)} deals with the problem of interpolation. 
\begin{proof}
Choose $k\in\n$ and let $f\colon S\to\Sgot_*$ be a map of class $\Ascr(S)$ which is nonflat on $K_0$.
Consider the period map $\Pcal=(\Pcal_1,\ldots,\Pcal_l)\colon \Cscr(C,\c^n)\to (\c^n)^l$ whose $i$-th component $\Pcal_i\colon \Cscr(C,\c^n)\to \c^n$ is defined by
\begin{equation}\label{eq:Periods}
\Cscr(C,\c^n)\ni g\mapsto \Pcal_i(g)=\int_{C_i}g\theta,\quad i=1,\dots,l.
\end{equation}

Since $S$ is very simple and $f$ is holomorphic and nonflat on $\mathring K_0$, each $C_i$, $i=1,\ldots,l$, contains a subarc $\wt C_i\subset\mathring K_0\setminus\{p_0\}$ such that $f$ is nowhere flat on $\wt C_i$; if $i\in\{m+1,\ldots,m'\}$ then we may choose $\wt C_i\subset C_i\setminus\{p_0,p_i\}$. Thus, Lemma \ref{lem:pathRiemann} applied to the map $f|_C\colon C\to\Sgot_*$, the base point $p_0$, and the curves $C_1,\ldots,C_l$ furnishes functions $g_{i,1},\ldots,g_{i,N}\colon C\to\c$, with support on $\wt C_i$, $i=1,\ldots,l$, and a neighborhood $U$ of $0\in (\c^N)^l$ such that the period map $\Psf\colon U\to(\c^n)^l$ whose $i$-th component $\Psf_i\colon U\to\c^n$ is given by
\[
    \Psf_i(\zeta):=  \int_{C_i}\phi^1_{\zeta_1^1g_{1,1}(p)}\circ \cdots \circ \phi^N_{\zeta_N^1g_{1,N}(p)}\circ\cdots\circ \phi^1_{\zeta_1^lg_{l,1}(p)}\circ \cdots \circ \phi^N_{\zeta_N^lg_{l,N}(p)}(f(p))\, \theta
\]
(see \eqref{eq:CartanA} and \eqref{eq:CartanA1}), is well defined and has maximal rank equal to $nl$ at $\zeta=0$. Since $C\subset M$ is Runge, Theorem \ref{th:MTJI} enables us to approximate each $g_{i,j}$ by functions $h_{i,j}\in\Ocal(M)\subset\Ascr(L)\subset\Ascr(S)$ satisfying condition {\it i.1)}; recall that every function $g_{ij}$ vanishes on a neighborhood of $p_r$ for all $r=1,\ldots,m'$. Furthermore, if the approximation of $g_{i,j}$ by $h_{i,j}$ is close enough then the period map defined in {\it i.2)} also has maximal rank at $\zeta=0$. Finally, by varying $f$ locally (keeping the functions $h_{i,j}$ fixed) we obtain a holomorphic family of maps $f\mapsto \Phi_f$ with the desired properties. This proves {\it i)}.

Let us now prove assertion {\it ii)}, so assume that $\Sgot_*$ is an Oka manifold. Up to adding to $S$ a smoothly bounded compact disk $D\subset M\setminus S$ and extend $f$ to $D$ as a function of class $\Ascr(S)$ all whose components are different from the constant $0$ on $D$, we may assume that no component function of $f$ vanishes everywhere on $S$. Consider the map $\Phi\colon U\times S\to\Sgot$ given in {\it i.2)} and, up to shrinking $U$ if necessary, assume that $\Phi(U\times S)\in\Sgot_*$.
Note that the functions $h_{i,j}$ are defined on $L$ but $f$ only on $S$. By {\it i)}, the period map $\Qcal\colon U\to(\c^n)^l$ with $i$-th component
\[
    \Qcal_i(\zeta)=\int_{C_i}\Phi(\zeta,\cdot)\theta=\Pcal_i(\Phi(\zeta,\cdot)),\quad \zeta\in U
\]
(see \eqref{eq:Periods}), has maximal rank equal to $nl$ at $\zeta=0$. It follows that the image by $\Qcal$ of any open neighborhood of $0\in U\subset (\c^N)^l$ contains an open ball in $(\c^n)^l$ centered at $\Qcal(0)=\Pcal(f)$; see \eqref{eq:Periods}. Since $S\subset M$ is Runge and $\Sgot_*$ is Oka, Theorem \ref{th:MTJI} allows us to approximate $f$ by holomorphic maps $\wh f\colon M\to\Sgot_*$ such that 
\begin{equation}\label{eq:whf-f}
     \text{$\wh f-f$ has a zero of multiplicity $k$ at $p_r$ for all $r=1,\ldots,m'$.}
\end{equation}
Define $\wh \Phi\colon U\times L\to\Sgot$ by 
\begin{equation}\label{eq:whPhi}
\wh\Phi(\zeta,p)=\phi^1_{\zeta_1^1h_{1,1}(p)}\circ \cdots \circ \phi^N_{\zeta_N^1h_{1,N}(p)}\circ\cdots\circ \phi^1_{\zeta_1^l h_{l,1}(p)}\circ \cdots \circ \phi^N_{\zeta_N^l h_{l,N}(p)}(\wh f(p))
\end{equation}
and, up to shrinking $U$ once again if necessary, assume that $\wh\Phi(U\times L)\subset\Sgot_*$.
Consider now the period map $\wh \Qcal\colon U\to (\c^n)^l$ whose $i$-th component $U\to\c^n$ is given by
\[
U\ni \zeta \mapsto \int_{C_i} \wh \Phi(\zeta,\cdot)\, \theta,\quad i=1,\ldots,l.
\]
Thus, for any open ball $0\in W\subset U$, if the approximation of $f$ by $\wh f$ is close enough, the range of $\wh\Qcal(W)$ also contains $\Pcal(f)$. Therefore, there is $\zeta_0\in W\subset U$ close to $0\in(\c^N)^l$ such that 
\begin{equation}\label{eq:wtfPhi}
\wt f:=\wh\Phi(\zeta_0,\cdot)\colon L\to\Sgot_*
\end{equation}
lies in $\Ascr(L)$ and satisfies {\it ii.1)} and {\it ii.2)}; recall that $S_0$ is a strong deformation retract of $L$ and so the curves $C_i$, $i=m'+1,\ldots,l$, determines a basis of $H_1(L;\z)$. To finish the proof, Lemma \ref{lem:jet},  {\it i.1)}, \eqref{eq:whPhi}, and \eqref{eq:wtfPhi} guarantee that $\wt f-\wh f$ has a zero of multiplicity (at least) $k$ at $p_r$ for all $r=1,\ldots,m'$. This and \eqref{eq:whf-f} ensure {\it ii.3)}. Finally, if the approximation of $f$ by $\wt f$ on $S$ is close enough, since no component function of $f$ vanishes everywhere on $S$, then no component function of $\wt f$ vanishes everywhere on $M$, which proves {\it ii.4)} and concludes the proof.
\end{proof}

We now show the following technical result which will considerably simplify the subsequent proofs.

%
%

\begin{proposition}\label{pro:nonflat}
Let $n\ge 3$ be an integer and $\Sgot$ be an irreducible closed conical complex subvariety of $\c^n$ which is not contained in any hyperplane.
Let $M=\mathring M\cup bM$ be a compact bordered Riemann surface, $\theta$ be a holomorphic $1$-form vanishing nowhere on $M$, and $\Lambda\subset\mathring M$ be a finite subset. Choose $p_0\in M\setminus\Lambda$ and, for each $p\in\Lambda$, let $C_p\subset\mathring M$ be a smooth Jordan arc with initial point $p_0$ and final point $p$ such that $C_p\cap C_q=\{p_0\}$ for all $p\neq q\in\Lambda$.

Let $f:M\to\Sgot_*$ be a map of class $\Ascr(M)$ which is flat (see Definition \ref{def:degenerateI}) and $k\in\n$ be an integer.
Then $f$ may be approximated uniformly on $M$ by nonflat maps $\wt f:M\to\Sgot_*$ of class $\Ascr(M)$ satisfying the following properties.
\begin{enumerate}[\rm (i)]
\item $(\wt f-f)\theta$ is exact on $M$.
\item $\displaystyle \int_{C_p}(\wt f- f)\theta = 0$ for all $p\in\Lambda$.
\item $\wt f-f$ has a zero of multiplicity $k$ at all points $p\in\Lambda$.
\end{enumerate}
\end{proposition}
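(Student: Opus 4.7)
The plan is to proceed in two stages. First, I produce a small nonflat perturbation $f_1$ of $f$ that already satisfies the jet-interpolation condition (iii) and is uniformly close to $f$; then I deform $f_1$ further by a period-dominating spray to obtain $\wt f$ meeting (i) and (ii) while preserving (iii) and nonflatness. For the first stage, since $f$ is flat we may write $f(M)\subset\c z_0$ for some $z_0\in\Sgot_*$. As $\Sgot$ is irreducible, conical, and not contained in any hyperplane, its complex dimension is at least $2$; the radial direction $z_0$ lies in $T_{z_0}\Sgot$, so $T_{z_0}\Sgot$ contains a vector not proportional to $z_0$, and by \eqref{eq:CartanA} some $V_j(z_0)$ is not in $\c z_0$. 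Fix such an index $j$. Using the Weierstrass theorem on open Riemann surfaces (after embedding $M$ as a smoothly bounded compact domain in an open Riemann surface), pick $h\in\Ocal(M)$, not identically zero, vanishing to order $k$ at every point of $\Lambda$. For $t\in\c$ of sufficiently small modulus, set
\[
f_1(p):=\phi^j_{t\, h(p)}(f(p)),\quad p\in M,
\]
so that $f_1\in\Ascr(M,\Sgot_*)$. Lemma \ref{lem:jet} applied with $th$ shows $f_1-f$ vanishes to order $k$ at every $p\in\Lambda$, so (iii) is satisfied by $f_1$. From the first-order expansion $f_1(p)=f(p)+t\,h(p)\,V_j(f(p))+O(t^2)$ and the fact that $V_j(z_0)\notin\c z_0$ while $f$ takes values in $\c z_0$, the leading correction points out of $\c z_0$ at a generic point of $M$; hence $f_1$ is nonflat for generic small $t\neq 0$, and the identity principle on the connected surface $\mathring M$ upgrades this to $f_1$ being \emph{nowhere} flat on $\mathring M$.

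For the second stage, enumerate $\Lambda=\{p_1,\ldots,p_{m'}\}$ and augment the arcs $\{C_{p_r}\}_{r=1}^{m'}$ with closed curves $\gamma_1,\ldots,\gamma_{l'}\subset\mathring M\setminus\Lambda$ based at $p_0$, forming a homology basis of $M$, so that the bouquet $C=\bigcup_r C_{p_r}\cup\bigcup_i\gamma_i$ is Runge in an open Riemann surface containing $M$ and its curves meet pairwise only at $p_0$. On each component of $C$ pick a subarc disjoint from $\Lambda\cup\{p_0\}$ on which $f_1$ is nowhere flat. Lemma \ref{lem:pathRiemann} then produces continuous functions $g_{s,j}\colon C\to\c$ $(s=1,\ldots,m'+l';\ j=1,\ldots,N)$ supported in these subarcs such that the associated multi-flow $\Phi_{f_1}(\zeta,\cdot)$ is period-dominating at $\zeta=0$ along the curves of $C$. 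By Theorem \ref{th:MTJI}, each $g_{s,j}$ may be approximated uniformly on $C$ by $h_{s,j}\in\Ocal(M)$ vanishing to order $k$ at every point of $\Lambda$ (possible because $g_{s,j}$ already vanishes identically near $\Lambda$); a close enough approximation preserves period-domination at $\zeta=0$. The combined map
\[
U\ni\zeta\longmapsto\left(\Bigl(\int_{C_{p_r}}(\Phi_{f_1}(\zeta,\cdot)-f)\theta\Bigr)_{r},\ \Bigl(\int_{\gamma_i}(\Phi_{f_1}(\zeta,\cdot)-f)\theta\Bigr)_{i}\right)\in(\c^n)^{m'+l'}
\]
has maximal rank $n(m'+l')$ at $\zeta=0$ and value $O(t)$ there; the implicit function theorem produces $\zeta_0\in U$ close to $0$ at which this combined map vanishes. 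Setting $\wt f:=\Phi_{f_1}(\zeta_0,\cdot)$ yields (ii) and (i) from the first and second blocks of components, respectively. Lemma \ref{lem:jet} together with the fact that every $h_{s,j}$ vanishes to order $k$ at $\Lambda$ gives $\wt f-f_1$ vanishing to order $k$ at each $p\in\Lambda$, which combined with the Step 1 estimate yields (iii). Nonflatness of $\wt f$ persists because it is close to the nowhere-flat $f_1$ and nonflatness is an open condition.

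The principal obstacle is the first stage: Lemma \ref{lem:main-noncritical} takes nonflatness as a standing hypothesis, so the substantive new ingredient here is producing the nonflat perturbation $f_1$ of the flat $f$ without destroying the prescribed jets at $\Lambda$. This depends on the conical non-hyperplanar structure of $\Sgot$, which forces $T_{z_0}\Sgot$ to be strictly larger than the line $\c z_0$ containing the image of $f$ and thus provides a vector field $V_j$ transverse to $\c z_0$. Once $f_1$ is available, the period correction in the second stage is a routine adaptation of the spray formalism of Section \ref{sec:paths}, the only care being to arrange the supports of the spray functions away from $\Lambda\cup\{p_0\}$ so that the jet condition survives.
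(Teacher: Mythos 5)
Your Stage 1 is sound and is essentially the paper's own mechanism: flow $f$ along a tangential field whose value on the line $\c z_0\supset f(M)$ is not tangent to that line, with a multiplier vanishing to order $k$ on $\Lambda$, and invoke Lemma \ref{lem:jet} for (iii). The gap is in Stage 2. After Stage 1 the period error $\Pcal(f_1)-\Pcal(f)$ has size comparable to $t$, and you propose to remove it with a period-dominating spray built from $f_1$ via Lemma \ref{lem:pathRiemann}. But the period domination of that spray degenerates exactly as $t\to 0$: since $\Sgot$ is a cone, the tangent space $T_{\lambda z_0}\Sgot=T_{z_0}\Sgot$ is the \emph{same} subspace of $\c^n$ at every point of the ruling line containing $f(M)$, and it is a proper subspace whenever $\dim\Sgot<n$ (e.g.\ for the null quadric $\Agot$). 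Hence at $t=0$ all the vectors $V_j(f(x))$ lie in $T_{z_0}\Sgot$, the differential of the period map has image inside $(T_{z_0}\Sgot)^{m'+l'}$, and for small $t\neq0$ its smallest singular value is only $O(t)$ (each $V_j(f_1(x))$ lies within $O(t)$ of $T_{z_0}\Sgot$). The quantitative inverse function theorem then only guarantees that the periods of the spray cover a ball of radius $O(\sigma^2)=O(t^2)$ around $\Pcal(f_1)$, which does not reach $\Pcal(f)$ at distance $\sim t$; and the $\zeta_0$ one would need has size $\sim t/\sigma\sim1$, so it is neither inside the domain of validity of the IFT nor small enough to keep $\wt f$ close to $f$. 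So the sentence ``the implicit function theorem produces $\zeta_0\in U$ close to $0$ at which this combined map vanishes'' is unjustified, and the two-stage decoupling fails precisely in the regime where the approximation is good.

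The paper avoids creating the period error in the first place. It considers the holomorphic map $h\mapsto\Pcal\big(\phi_{h_1h}(f)\big)\in(\c^n)^{l}$ on (a neighborhood of $0$ in) the infinite-dimensional space of multipliers $h\in\Ascr(M)$ vanishing to order $k$ on $\Lambda$, where $h_1$ is a fixed nonconstant function with $h_1(p_0)=0$ and $\phi_s$ is the flow of a single tangential field $V$ not everywhere tangent to $\c_*z_0$ along $f(M)$. The fiber of this map through $h=0$ has finite codimension, hence contains nonzero $h$ arbitrarily close to $0$; the resulting perturbation preserves \emph{all} periods exactly, has the right jets, and is nonflat. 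If you want to keep your two-stage structure you would have to at least choose $h$ in Stage 1 so that $\int_{C_i}h\,V_j(f)\,\theta=0$ for every curve and then control all higher-order terms of the period error, which leads you back to the paper's fiber argument.
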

\begin{proof}
Without loss of generality we assume that $\Lambda\neq \emptyset$, write $\Lambda=\{p_1,\ldots,p_{l'}\}$, and set $C_i:=C_{p_i}$, $i=1,\ldots,l'$. Choose $C_{l'+1},\ldots,C_{l}$ closed Jordan loops in $\mathring M$ forming a basis of $H_1(M,\z)\cong\z^{l-l'}$ such that $C_i\cap C_j=\{p_0\}$ for all $i,j\in\{1,\ldots,l\}$, $i\neq j$, and $C:=\bigcup_{j=1}^l C_j$ is a Runge subset of $M$; existence of such is ensured by basic topological arguments. Consider smooth parameterizations $\gamma_j\colon [0,1]\to C_j$ of the respective curves verifying $\gamma_j(0)=p_0$ and $\gamma_j(1)=p_j$ for $j=1,\ldots,l'$, and $\gamma_j(0)=\gamma_j(1)=p_0$ for $j=l'+1,\ldots,l$.

Since $f$ is flat there exists $z_0\in\Sgot_*$ such that $f(M)\subset \c_* z_0$. Observe that $\c z_0$ is a proper complex subvariety of $\Sgot$.
We consider the period map $\Pcal=(\Pcal_1,\ldots,\Pcal_l):\Ascr(M)\to(\c^n)^l$ defined by
\begin{equation}\label{eq:periodsss}
\Ascr(M)\ni g\mapsto\Pcal_j(g):=\int_{C_j}g\theta=\int_0^1 g(\gamma_j(t))\theta(\gamma_j(t),\dot\gamma_j(t))\, dt,\quad j=1,\ldots,l.
\end{equation}
Note that a map $g\in\Ascr(M)$ meets {\rm (i)} and {\rm (ii)} if, and only if, $\Pcal(g)=\Pcal(f)$. So, to finish the proof it suffices to approximate $f$ uniformly on $M$ by nonflat maps $\wt f\in\Ascr(M)$ satisfying the latter condition and also {\rm (iii)}.

Choose a holomorphic vector field $V$ on $\c^n$ which is tangential to $\Sgot$ along $\Sgot$, vanishes at $0$, and is not everywhere tangential to $\c_*z_0$ along $f(M)$. Let $\phi_s(z)$ denote the flow of $V$ for small values of time $s\in\c$. Choose a nonconstant function $h_1\colon M\to\c$ of class $\Ascr(M)$ such that $h_1(p_0)=0$. Denote by $\mho$ the space of all functions $h\colon M\to\c$ of class $\Ascr(M)$ having a zero of multiplicity $k\in \n$ at all points $p\in\Lambda$. The following map is well-defined and holomorphic on a small open neighborhood $\mho^*$ of the zero function in $\mho$: 
\[
    \mho^*\ni h \mapsto \Pcal(\phi_{h_1(\cdot)h(\cdot)}(f(\cdot)))\in(\c^n)^l.
\]
Each component $\Pcal_j$, $j=1,\ldots,l$, of this map at the point $h=0$ equals 
\[
    \Pcal_j(\phi_0(f))=\Pcal_j(f)
\] 
(recall that $V$ vanishes at $0\in\c^n$). Since $\mho$ is infinite dimensional, there is a function $h\in\mho$ arbitrarily close to the function $0$ (in particular, we may take $h\in\mho^*$) and nonconstant on $M$, such that
\[
\Pcal(\phi_{h_1(\cdot)h(\cdot)}(f(\cdot)))=\Pcal(f).
\]

Set $\wt f(p)=\phi_{h_1(p)h(p)}(f(p))$, $p\in M$. Assume that $\|h\|_{0,M}$ is sufficiently small so that $\wt f$ is well defined and of class $\Ascr(M)$,  $\wt f$ approximates $f$ on $M$, and $\wt f(p)\in\Sgot_*$ for all $p\in M$. By the discussion below equation \eqref{eq:periodsss}, $\wt f$ meets {\rm (i)} and {\rm (ii)}. On the other hand, since $h$ has a zero of multiplicity $k$ at every point of $\Lambda$ and $h_1$ is not constant, we infer that $hh_1$ also has a zero of multiplicity (at least) $k$ at all points of $\Lambda$. Thus, Lemma \ref{lem:jet} ensures that $\wt f-f$ satisfies {\rm (iii)}. Finally, since $h_1(p_0)=0$ and $V$ vanishes at $0$, we have that $\wt f(p_0)=f(p_0)\in \c_*z_0$, whereas since $h h_1$ is nonconstant on $M$ and $V$ is not everywhere tangential to $\c_* z_0$ along $f(M)$, there is a point $q\in M$ such that $\wt f(q)\notin \c_* z_0$. This proves that $\wt f$ is nonflat, which concludes the proof.
\end{proof}

%
%

The following is the main technical result of this paper.

\begin{theorem}\label{th:MtT}
Let $n\ge 3$ be an integer and $\Sgot$ be an irreducible closed conical complex subvariety of $\c^n$ which is not contained in any hyperplane and such that $\Sgot_*=\Sgot\setminus\{0\}$ is smooth and an Oka manifold.
Let $M$ be an open Riemann surface, $\theta$ be a holomorphic $1$-form vanishing nowhere on $M$, $\Kcal\subset M$ be a smoothly bounded Runge compact domain, and $\Lambda\subset M$ be a closed discrete subset. Choose $p_0\in \mathring \Kcal\setminus \Lambda$ and, for each $p\in\Lambda$, let $C_p\subset M$ be an oriented Jordan arc with initial point $p_0$ and final point $p$ such that $C_p\cap C_q=\{p_0\}$ for all $q\neq p\in\Lambda$ and $C_p\subset \Kcal$ for all $p\in\Lambda\cap \Kcal$. Also, for each $p\in\Lambda$, let $\Omega_p\subset M$ be a compact neighborhood of $p$ in $M$ such that $\Omega_p\cap (\Omega_q\cup C_q)=\emptyset$ for all $q\neq p\in \Lambda$. Set $\Omega:=\bigcup_{p\in\Lambda} \Omega_p$.

Let $f\colon \Kcal\cup\Omega \to\Sgot_*$ be a map of class $\Ascr(\Kcal\cup\Omega)$, let $\qgot\colon H_1(M;\z)\to\c^n$ be a group homomorphism, and let $\Zgot\colon \Lambda\to\c^n$ be a map, such that:
\begin{enumerate}[\rm (a)]
\item $\int_\gamma f\theta=\qgot(\gamma)$ for all closed curves $\gamma\subset \Kcal$.
\item $\int_{C_p} f\theta =\Zgot(p)$ for all $p\in\Lambda\cap \Kcal$.
\end{enumerate}
Then, for any integer $k\in\n$, $f$ may be approximated uniformly on $\Kcal$ by holomorphic maps $\wt f\colon M\to\Sgot_*$ satisfying the following conditions:
\begin{enumerate}[\rm (A)]
\item $\int_\gamma \wt f\theta=\qgot(\gamma)$ for all closed curves $\gamma\subset M$.
\item $\wt f-f$ has a zero of multiplicity $k$ at $p$ for all $p\in\Lambda$; equivalently, $\wt f$ and $f$ have the same $(k-1)$-jet at every point $p\in\Lambda$.
\item $\int_{C_p} \wt f\theta =\Zgot(p)$ for all $p\in\Lambda$.
\item No component function of $\wt f$ vanishes everywhere on $M$.
\end{enumerate}
\end{theorem}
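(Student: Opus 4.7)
I would argue by inductive exhaustion. First apply Proposition \ref{pro:nonflat} to $f|_{\Kcal}$ to replace it by a uniformly close nonflat map of class $\Ascr(\Kcal)$ having the same $(k-1)$-jet at every $p\in\Lambda\cap\Kcal$, the same periods $\int_\gamma f\theta$ along all closed curves $\gamma\subset\Kcal$, and the same values $\int_{C_p\cap \Kcal}f\theta$. Then exhaust $M$ by smoothly bounded Runge compact domains $\Kcal=M_0\subset M_1\subset M_2\subset\cdots$ with $\bigcup_j M_j=M$, chosen so that each $\Lambda_j:=\Lambda\cap M_j$ is finite, $\Omega_p\subset \mathring M_j$ and $C_p\subset M_j$ whenever $p\in\Lambda_j$, and the Euler characteristic increment $\chi(M_{j+1})-\chi(M_j)\in\{-1,0\}$ with at most one new point of $\Lambda$ appearing in $M_{j+1}\setminus M_j$. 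Fix a summable sequence $\epsilon_j>0$. The goal is to construct a sequence of holomorphic maps $f_j\colon M_j\to\Sgot_*$ approximating $f_{j-1}$ within $\epsilon_j$ on $M_{j-1}$, retaining the same $(k-1)$-jet at every point of $\Lambda_j$, having period $\qgot(\gamma)$ along every closed curve $\gamma\subset M_j$, satisfying $\int_{C_p}f_j\theta=\Zgot(p)$ for every $p\in\Lambda_j$, and with no component function identically zero.

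\textbf{Noncritical step.} When $M_j$ is a strong deformation retract of $M_{j+1}$ and no new $\Lambda$-point appears, I view $M_j$ as a very simple admissible subset of $M_{j+1}$ whose kernel component carries the entire topology, and apply assertion \emph{ii)} of Lemma \ref{lem:main-noncritical} with $L=M_{j+1}$, the base point $p_0$, the arcs $C_p$ for $p\in\Lambda_j$, and a family of closed loops completing a basis of $H_1(M_j;\z)=H_1(M_{j+1};\z)$. The conclusions ({\it ii.1})--({\it ii.4}) yield the next holomorphic map $f_{j+1}\colon M_{j+1}\to\Sgot_*$ approximating $f_j$ on $M_j$, preserving the $(k-1)$-jets at $\Lambda_j$, the period homomorphism along the basis, and the arc integrals along each $C_p$, and having no component identically zero. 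Taking the approximation tight enough keeps $f_{j+1}$ nonflat on the kernel component (this persists into the next step).

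\textbf{Critical step.} When $\chi(M_{j+1})=\chi(M_j)-1$ I attach either a new handle through a smooth arc $\gamma\subset\mathring M_{j+1}\setminus M_j$ with both endpoints in $bM_j$, or a new disk connected to $M_j$ by a smooth arc $\gamma$; in the latter case this disk may contain a new point $p_*\in\Lambda$, in which case I also include $\Omega_{p_*}$ and the subarc of $C_{p_*}$ lying outside $M_j$. I extend $f_j$ continuously across $\gamma$ into $\Sgot_*$ using Lemma \ref{lem:path-periods}: with parameter $\vartheta(t)=\theta(\gamma(t),\dot\gamma(t))$ and endpoint values $u_0,u_1$ dictated by the existing data on $bM_j$ (and on $\Omega_{p_*}$ when applicable), I prescribe the arc-integral $x\in\c^n$ equal to the value of $\qgot$ on the new homology generator (handle case), or to the residual contribution needed so that the total integral along $C_{p_*}$ equals $\Zgot(p_*)$ (new-point case). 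Lemma \ref{lem:path-periods} additionally produces an extension that is nowhere flat on a subarc of $\gamma$, so $S:=M_j\cup\gamma\cup\Omega_{p_*}$ is a very simple admissible subset on which the extended map is nonflat on the kernel component; I now apply assertion \emph{ii)} of Lemma \ref{lem:main-noncritical} with $L=M_{j+1}$ to obtain $f_{j+1}$ with all required properties.

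\textbf{Convergence and main obstacle.} Summability of $(\epsilon_j)$ combined with Cauchy estimates ensures that $\wt f:=\lim_j f_j$ is a well-defined holomorphic map $M\to\c^n$; choosing each $\epsilon_j$ smaller than the Euclidean distance from $f_j(M_{j-1})$ to $0\in\c^n$ forces $\wt f(M)\subset\Sgot_*$. The $(k-1)$-jet at each $p\in\Lambda$ is preserved once $p\in\Lambda_j$, and the period/value conditions (A) and (C) pass to the limit, while (D) is inherited from the last conclusion of Lemma \ref{lem:main-noncritical}. The hard part is the critical step: one must simultaneously realize the prescribed $\qgot$-period (or $\Zgot$-value) across the new arc, approximate $f_j$ on $M_j$, and keep intact all already-correct jets at $\Lambda_j$ and all integrals along the preexisting arcs $C_p$, $p\in\Lambda_j$. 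This compatibility is exactly what the period-dominating sprays constructed in Lemmas \ref{lem:path-sprays} and \ref{lem:pathRiemann}, packaged into the jet-and-period version of Mergelyan provided by Lemma \ref{lem:main-noncritical}, are designed to deliver.
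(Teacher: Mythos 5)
Your plan follows the paper's own strategy: exhaust by Runge compact domains with Euler increment in $\{0,-1\}$, use Lemma~\ref{lem:path-periods} to extend across arcs and Lemma~\ref{lem:main-noncritical}-\emph{ii)} for the Mergelyan step with period and jet control, then pass to the limit. Two remarks are worth making.

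First, a bookkeeping slip: attaching a disk to $M_j$ by a single smooth arc leaves $\chi$ unchanged, so the ``new disk containing a new $\Lambda$-point'' situation is a \emph{noncritical} step, not a step with $\chi(M_{j+1})=\chi(M_j)-1$. Your stated noncritical case explicitly excludes new $\Lambda$-points, so as written the noncritical-step-with-new-point scenario falls through the cracks; the fix is exactly the treatment you put (mislabelled) under the critical heading. This is how the paper proceeds: new interpolation points are absorbed in the noncritical step, and the genuinely critical step (handle attachment) is arranged to carry no new $\Lambda$-points and reduces back to the noncritical one.

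Second, the paper avoids your requirement that $C_p\subset M_j$ whenever $p\in\Lambda_j$ (which forces one to tailor the exhaustion to arcs that are only assumed to be Jordan, not smooth, and may wander). Instead it introduces auxiliary \emph{smooth} arcs $C^p\subset\mathring M_j$ joining $p$ back to $p_0$, carries the invariant $\int_{C^p}f_j\theta=\qgot(C_p*C^p)-\Zgot(p)$, and uses the prescribed $C_p$ only through the homology class $[C_p*C^p]\in H_1(M;\z)$. Since $\wt f\theta$ is closed, conclusion (C) then follows from (A) together with the tracked value of $\int_{C^p}\wt f\theta$. Your route is repairable (replace the $C_p$ by homotopic smooth arcs and choose the exhaustion to contain them), but the paper's device of auxiliary arcs is cleaner and removes both the regularity and containment constraints.
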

\begin{proof}
Up to slightly enlarging $\Kcal$ if necessary, we may assume without loss of generality that $\Lambda\cap b\Kcal=\emptyset$. Further, up to shrinking the sets $\Omega_p$, we may also assume that, for each $p\in\Lambda$, either $\Omega_p\subset\mathring \Kcal$ or $\Omega_p\cap\Kcal=\emptyset$. Finally, by Proposition \ref{pro:nonflat} we may assume that $f\colon \Kcal\to\Sgot_*$ is nonflat.

Set $M_0:=\Kcal$ and let $\{M_j\}_{j\in\n}$ be a sequence of smoothly bounded Runge compact domains in $M$ such that 
\[
M_0\Subset M_1\Subset M_2\Subset\cdots \Subset \bigcup_{j\in\n} M_j=M.
\]
Assume also that the Euler characteristic $\chi(M_j\setminus \mathring M_{j-1})$ of $M_j\setminus\mathring  M_{j-1}$ is either $0$ or $-1$, and that $\Lambda\cap bM_j=\emptyset$ for all $j\in\n$. Such a sequence can be constructed by basic topological arguments; see e.\ g.\ \cite[Lemma 4.2]{AlarconLopez2013JGA}. Since $\Lambda$ is closed and discrete, $M_j$ is compact, and $\Lambda\cap bM_j=\emptyset$ for all $j\in\z_+$, then $\Lambda_j:=\Lambda \cap M_j=\Lambda \cap \mathring M_j$ is either empty or finite. 
Without loss of generality we assume that $\Lambda_0\neq \emptyset$ and $\Lambda_j\setminus\Lambda_{j-1}=\Lambda\cap (\mathring M_j\setminus M_{j-1})\neq\emptyset$ for all $j\in\n$, and hence $\Lambda$ is infinite.

Set $f_0:=f|_\Kcal$ and, for each $p\in \Lambda_0\neq\emptyset$, choose an oriented Jordan arc $C^p\subset \mathring M_0$ with initial point $p$ and final point $p_0$, such that 
\begin{equation}\label{eq:C^p0}
    C^p\cap C^q=\{p_0\}\quad \text{for all $p\neq q\in \Lambda_0$.}
\end{equation}
Such curves trivially exist. 

 To prove the theorem we shall inductively construct a sequence of maps $f_j\colon M_j\to\Sgot_*\subset\c^n$ and a family of oriented Jordan arcs $C^p\subset \mathring M_j$, $p\in  \Lambda_j\setminus\Lambda_{j-1}\neq\emptyset$, $j\in\n$, with initial point $p$ and final point $p_0$, meeting the following properties:
\begin{enumerate}[\rm (i$_j$)]
\item $\|f_j-f_{j-1}\|_{0,M_{j-1}}<\epsilon_{j}$ for a certain constant $\epsilon_j>0$ which will be specified later.
\vspace*{1mm}
\item $\int_\gamma f_j\theta=\qgot(\gamma)$ for all closed curves $\gamma\subset M_j$.
\vspace*{1mm}
\item $\int_{C^p} f_j\theta =\qgot(C_p*C^p)-\Zgot(p)$ for all $p\in\Lambda_j$. (Recall that $*$ denotes the product of oriented arcs; see Subsec.\ \ref{ss:RS}.)
\vspace*{1mm}
\item $f_j-f$ has a zero of multiplicity $k$ at $p$ for all $p\in\Lambda_j$.
\vspace*{1mm}
\item $C^p\cap C^q=\{p_0\}$ for all $p\neq q\in \Lambda_j$.
\vspace*{1mm}
\item No component function of $f_j$ vanishes everywhere on $M_j$.
\end{enumerate} 
(See Figure \ref{fig:admissibleOmega}.) 
\begin{figure}[ht]
	\includegraphics[width=12cm]{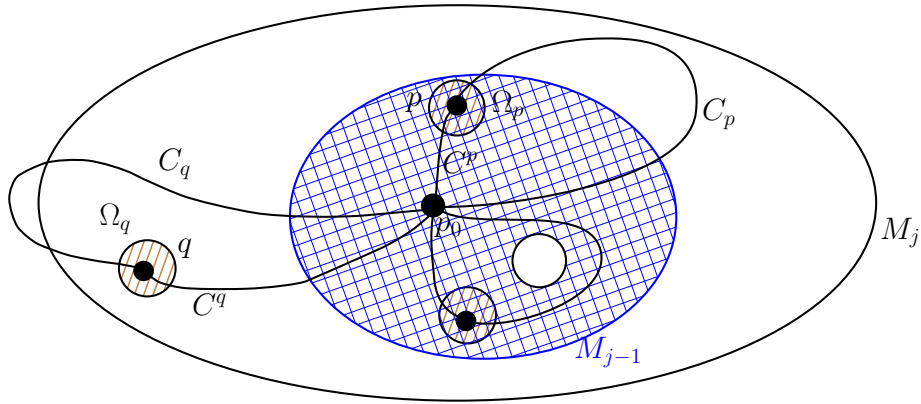}
	\caption{The set $\Kcal\subset M$, the arcs $C_p$, and the domains $\Omega_p$ in Theorem \ref{th:MtT}.}\label{fig:admissibleOmega}
\end{figure}
Assume for a moment that we have already constructed such sequence. Then choosing the sequence $\{\epsilon_j\}_{j\in\n}$ decreasing to zero fast enough, {\rm (i$_j$)} ensures that there is a limit holomorphic map 
\[
      \wt f:=\lim_{j\to\infty} f_j\colon M\to\Sgot_*
\]
which is as close as desired to $f$ uniformly on $\Kcal$,
whereas properties {\rm (ii$_j$)}, {\rm (iii$_j$)}, {\rm (iv$_j$)}, {\rm (v$_j$)}, and {\rm (vi$_j$)} guarantee {\rm (A)}, {\rm (B)}, {\rm (C)},  and {\rm (D)}. This would conclude the proof.

The basis of the induction is given by the nonflat map $f_0=f|_\Kcal$ and the already fixed oriented arcs $C^p$, $p\in\Lambda_0$. Condition {\rm (i$_0$)} is vacuous, {\rm (ii$_0$)}$=${\rm (a)}, {\rm (iii$_0$)} is implied by {\rm (a)} and {\rm (b)}, {\rm (iv$_0$)} is trivial, and {\rm (v$_0$)}$=$\eqref{eq:C^p0}. For the inductive step, we assume that we already have a map $f_{j-1}\colon M_{j-1}\to\Sgot_*$ and arcs $C^p\subset \mathring M_{j-1}$, $p\in \Lambda_{j-1}$, satisfying properties {\rm (ii$_{j-1}$)}--{\rm (v$_{j-1}$)} for some $j\in\n$, and let us construct a map $f_j$ and arcs $C^p$ for $p\in\Lambda_j\setminus \Lambda_{j-1}=\Lambda\cap (\mathring M_j\setminus M_{j-1})$, enjoying conditions {\rm (i$_j$)}--{\rm (vi$_j$)}. We distinguish cases depending on the Euler characteristic $\chi( M_j\setminus \mathring M_{j-1})$.


\smallskip

\noindent{\em Case 1: The noncritical case. Assume that $\chi(M_j\setminus \mathring M_{j-1})=0$}. In this case $M_{j-1}$ is a strong deformation retract of $M_j$. Recall that $\Lambda_j\setminus \Lambda_{j-1}$ is a non-empty finite set. Choose, for each $p\in \Lambda_j\setminus \Lambda_{j-1}$, an oriented Jordan arc $C^p\subset \mathring M_j$ with initial point $p$ and final point $p_0$, so that condition {\rm (v$_j$)} holds; such arcs trivially exist. Up to shrinking $\Omega_p$ if necessary, we assume without loss of generality that $\Omega_p\subset \mathring M_j\setminus M_{j-1}$ for all $p\in \Lambda_j\setminus \Lambda_{j-1}$ and $\Omega_p\cap C^q=\emptyset$ for all $q\in \Lambda_j\setminus \Lambda_{j-1}$, $q\neq p$.

Set 
\[
    K:=M_{j-1}\cup\Big(\bigcup_{p\in\Lambda_j\setminus\Lambda_{j-1}}\Omega_p\Big),\quad\Gamma:=\Big(\bigcup_{p\in\Lambda_j\setminus\Lambda_{j-1}}C^p\Big)\setminus \mathring K,
\]
and, up to slightly modifying the arcs $C^p$, $p\in\Lambda_j\setminus\Lambda_{j-1}$, assume that $S:=K\cup\Gamma\subset \mathring M_j$ is an admissible subset of $M$ (see Definition \ref{def:admissible}). Notice that $S$ is connected and a strong deformation retract of $M_j$; moreover, as admissible set, $S$ is very simple and the kernel component of $K$ is $M_{j-1}$ (see Definition \ref{def:simple}). Thus, Lemma \ref{lem:path-periods} furnishes a map $\varphi\colon S\to\Sgot_*$ of class $\Ascr(S)$ such that:
\begin{enumerate}[\rm (I)]
\item $\varphi=f_{j-1}$ on $M_{j-1}$.
\vspace*{1mm}
\item $\varphi=f$ on $\bigcup_{p\in\Lambda_j\setminus\Lambda_{j-1}}\Omega_p$.
\vspace*{1mm}
\item $\int_{C^p} \varphi\theta=\qgot(C_p*C^p)-\Zgot(p)$ for all $p\in\Lambda_j\setminus\Lambda_{j-1}$.
\end{enumerate}

Now, given $\epsilon_j>0$, Lemma \ref{lem:main-noncritical}-{\it ii)} applied to $S$, $M_j$, the arcs $C^p$, $p\in\Lambda_j$, the integer $k\in\n$, and the map $\varphi$, provides a map $f_j\colon M_j\to\Sgot_*$ of class $\Ascr(M_j)$ satisfying the following conditions:
\begin{enumerate}[\rm (I)]
\item[\rm (IV)] $\|f_j-\varphi\|_{0,S}<\epsilon_j$.
\vspace*{1mm}
\item[\rm (V)] $(f_j-\varphi)\theta$ is exact on $S$.
\vspace*{1mm}
\item[\rm (VI)] $\int_{C^p}(f_j-\varphi)\theta=0$ for all $p\in\Lambda_j$.
\vspace*{1mm}
\item[\rm (VII)] $f_j-\varphi$ has a zero of multiplicity $k$ at $p$ for all $p\in\Lambda_j$.
\vspace*{1mm}
\item[\rm (VIII)] No component function of $f_j$ vanishes everywhere on $M_j$.
\end{enumerate}
We claim the map $f_j$ meets properties {\rm (i$_j$)}--{\rm (iv$_j$)}; recall that {\rm (v$_j$)} is already guaranteed. Indeed, {\rm (i$_j$)} follows from {\rm (I)} and {\rm (IV)}; {\rm (ii$_j$)} from {\rm (ii$_{j-1}$)}, {\rm (I)}, {\rm (V)}, and the fact that $M_{j-1}$ is a strong deformation retract of $M_j$; {\rm (iii$_j$)} from {\rm (iii$_{j-1}$)}, {\rm (I)}, {\rm (III)}, and {\rm (VI)}; {\rm (iv$_j$)} from {\rm (iv$_{j-1}$)}, {\rm (I)}, {\rm (II)}, and {\rm (VII)}; and {\rm (vi$_j$)}$=${\rm (VIII)}.


\smallskip

\noindent {\em Case 2: The critical case. Assume that $\chi(M_{j}\setminus\mathring  M_{j-1})=-1$.} Now, the change of topology is described by attaching to $M_{j-1}$ a smooth arc $\alpha$ in $\mathring M_j\setminus \mathring M_{j-1}$ meeting $M_{j-1}$ only at their endpoints. Thus, $M_{j-1}\cup \alpha$ is a strong deformation retract of $M_j$. Further, we may choose $\alpha$ such that $\alpha\cap\Lambda=\emptyset$ and $S:=M_{j-1}\cup\alpha$ is an admissible subset of $M$, which is very simple (see Definition \ref{def:simple}). Since both endpoints of $\alpha$ lie in $bM_{j-1}$ there is a closed curve $\beta\subset S$ which contains $\alpha$ as a subarc and is not in the homology of $M_{j-1}$. Now, Lemma \ref{lem:path-periods} furnishes a map $\varphi\colon S\to\Sgot_*$ of class $\Ascr(S)$ such that $\varphi=f_{j-1}$ on $M_{j-1}$ and
\[
    \int_{\beta}\varphi\theta = \qgot(\beta).
\]
Choose a smoothly bounded compact domain $L\subset \mathring M_j$ such that $S\subset \mathring L$, $S$ is a strong deformation retract of $L$, and $L\cap (\Lambda_j\setminus\Lambda_{j-1})=\emptyset$. 
Given $\epsilon_j>0$, Lemma \ref{lem:main-noncritical}-{\it ii)} applied to $S$, $L$, the arcs $C^p$, $p\in\Lambda_{j-1}$, the integer $k\in\n$, and the map $\varphi$, provides a map $\wh f\colon L\to\Sgot_*$ of class $\Ascr(L)$ satisfying the following conditions:
\begin{enumerate}[\rm (i)]
\item $\|\wh f-\varphi\|_{0,S}<\epsilon_j/2$.
\vspace*{1mm}
\item $(\wh f-\varphi)\theta$ is exact on $S$.
\vspace*{1mm}
\item $\int_{C^p}(\wh f-\varphi)\theta=0$ for all $p\in\Lambda_{j-1}$.
\vspace*{1mm}
\item $\wh f-\varphi$ has a zero of multiplicity $k$ at $p$ for all $p\in\Lambda_{j-1}$.
\end{enumerate}
Since the Euler characteristic $\chi(M_j\setminus \mathring L)=0$, this reduces the construction to the noncritical case. This finishes the inductive process and concludes the proof of the theorem.
\end{proof}

To finish this section we prove a Runge-Mergelyan type theorem with jet-interpolation for holomorphic maps into Oka subvarieties of $\c^n$ in which a component function is preserved provided that it holomorphically extends to the whole source Riemann surface. This will be an important tool to ensure conditions {\rm (III)} and {\rm (IV)} in Theorem \ref{th:main-intro2} and {\rm (I)} and {\rm (II)} in Theorem \ref{th:main-intro3}.

%
%

\begin{lemma}\label{lem:MtFix}
Let $n\ge 3$ be an integer and $\Sgot$ be an irreducible closed conical complex subvariety of $\c^n$ which is not contained in any hyperplane. Assume that $\Sgot_*=\Sgot\setminus\{0\}$ is smooth and an Oka manifold, and that $\Sgot\cap\{z_1=1\}$ is also an Oka manifold and the coordinate projection $\pi_1\colon \Sgot\to\c$ onto the $z_1$-axis admits a local holomorphic section $h$ near $z_1=0$ with $h(0)\neq 0$.
Let $M$ be an open Riemann surface of finite topology, $\theta$ be a holomorphic $1$-form vanishing nowhere on $M$, $S=K\cup\Gamma\subset M$ be a connected very simple admissible Runge subset (see Definition \ref{def:simple}) which is a strong deformation retract of $M$. Let $\Lambda\subset \mathring K_0$ be a finite subset where $K_0$ is the kernel component of $S$. Choose $p_0\in \mathring K_0\setminus \Lambda$ and, for each $p\in\Lambda$, let $C_p\subset \mathring K_0$ be an oriented Jordan arc with initial point $p_0$ and final point $p$ such that $C_p\cap C_q=\{p_0\}$ for all $q\neq p\in\Lambda$.

Let $f=(f_1,\ldots,f_n)\colon S\to\Sgot_*$ be a continuous map, holomorphic on $K$, such that $f_1$ extends to a holomorphic map $M\to\c$ which does not vanish everywhere on $M$. Assume also that $f|_K\colon K\to\Sgot_*$ is nonflat.
Then, for any integer $k\in\z_+$, $f$ may be approximated in the $\Cscr^0(S)$-topology by holomorphic maps $\wt f=(\wt f_1,\wt f_2,\ldots,\wt f_n)\colon M\to\Sgot_*$ such that
\begin{enumerate}[\rm (i)]
\item $\wt f_1=f_1$ everywhere on $M$.
\item $\wt f-f$ has a zero of multiplicity $k$ at $p$ for all $p\in\Lambda$.
\item $\int_{C_p}(\wt f -f)\theta=0$ for all $p\in\Lambda$.
\item $\int_{\gamma}(\wt f -f)\theta=0$ for all closed curve $\gamma\subset S$.
\end{enumerate}
\end{lemma}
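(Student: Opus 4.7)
My plan is to run a fibered version of the scheme used to prove Theorem \ref{th:MtT}, in which every vector field, spray and approximation step preserves the first coordinate. The target of the resulting period problem is $\{0\}\times\c^{n-1}\cong\c^{n-1}$ rather than $\c^n$, because $\wt f_1-f_1\equiv 0$ will be automatic.

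The first ingredient is a family of fiber-preserving vector fields. Applying Cartan's theorem A to the coherent subsheaf of the tangent sheaf of $\c^n$ consisting of holomorphic vector fields tangent to $\Sgot$ along $\Sgot$ and annihilating $dz_1$, one produces global holomorphic vector fields $W_1,\ldots,W_{N'}$ on $\c^n$, each tangent to $\Sgot$ along $\Sgot$, vanishing at the origin, annihilating $dz_1$, and whose values span $T_z\Sgot\cap\ker dz_1$ at every $z\in\Sgot_*$. Their flows $\psi^j_s$ then preserve $z_1$. The second ingredient handles zeros of $f_1$ outside $S$: set $E:=f_1^{-1}(0)\cap(M\setminus S)$, which is closed and discrete since $f_1\not\equiv 0$; for each $q\in E$ choose a small compact disk $D_q\subset M\setminus S$ whose $f_1$-image lies in the domain of the local section $h$ of $\pi_1$, with the $D_q$ pairwise disjoint; and define $\wh f_q:=h\circ f_1\colon D_q\to\Sgot_*$. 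By construction $\pi_1\circ\wh f_q=f_1$, and $\wh f_q$ is nowhere zero because $h(0)\neq 0$.

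With these in hand I would extend $f$ to a map $F\in\Ascr(S^\sharp,\Sgot_*)$ on $S^\sharp:=S\cup\bigl(\bigcup_{q\in E}D_q\bigr)$ by $F|_S=f$ and $F|_{D_q}=\wh f_q$; this $S^\sharp$ is again a very simple admissible Runge subset of $M$ (with the disks $D_q$ constituting the component denoted $K'$ in the description following Definition \ref{def:simple}), and $F_1=f_1|_{S^\sharp}$. Now repeat the proofs of Lemmas \ref{lem:path-sprays}, \ref{lem:pathRiemann}, \ref{lem:main-noncritical} and Theorem \ref{th:MtT}, replacing the vector fields $V_1,\ldots,V_N$ of Notation \ref{not:} by $W_1,\ldots,W_{N'}$ throughout, and applying Theorem \ref{th:MTJI} in every Mergelyan step to the Oka manifold $\Sgot\cap\{z_1=1\}$ via the biholomorphism $\Sgot_*\setminus\pi_1^{-1}(0)\cong\c_*\times(\Sgot\cap\{z_1=1\})$ arising from the conical structure. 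Every modification preserves the first coordinate, so the resulting holomorphic limit $\wt f\colon M\to\Sgot_*$ satisfies $\wt f_1=f_1$ identically; the jet condition (ii), period conditions (iii), (iv) and approximation on $S$ are enforced exactly as in Theorem \ref{th:MtT}, but only on the last $n-1$ components.

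The main obstacle is verifying that the fiber-preserving sprays can be made period-dominating into $\c^{n-1}$. Following Lemma \ref{lem:path-sprays}, this reduces to showing that for suitable points $t_1,\ldots,t_{N'}$ along each period curve one has $\span\{W_1(f(t_1)),\ldots,W_{N'}(f(t_{N'}))\}=\ker dz_1\cong\c^{n-1}$. Since the $W_j$ span the entire vertical tangent bundle of $\pi_1|_\Sgot$ over $\Sgot_*$, this further reduces to the assertion that $\bigcup_{z\in\Sgot_*}(T_z\Sgot\cap\ker dz_1)$ spans $\ker dz_1\subset\c^n$, which in turn is equivalent, via the conical structure, to $\Sgot\cap\{z_1=1\}$ not being contained in any affine hyperplane of $\{z_1=1\}$: indeed, any such affine hyperplane, homogenized in $z_1$, would extend to a linear hyperplane of $\c^n$ containing $\Sgot$, contradicting the standing hypothesis. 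The nonflatness of $f|_{K_0}$ then provides the points $t_i$ at which the spanning is attained, exactly as in the proof of Lemma \ref{lem:path-sprays}.
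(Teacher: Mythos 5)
Your plan of making the spray machinery fiber-preserving via vector fields annihilating $dz_1$ is a genuinely different route from the paper's own proof, which instead views $\wh f=(f_2,\ldots,f_n)$ as a section of the pullback $E=f_1^*\Sgot\to M$ and invokes the Oka principle for sections of ramified holomorphic maps with Oka fibers, following \cite[Proof of Theorem 7.7]{AlarconForstneric2014IM} and \cite{Forstneric2003FM}. However, there is a gap at the period-domination step. You argue that nonflatness of $f|_{K_0}$ supplies points $t_i$ with $\span\{W_j(f(t_i))\}=\ker dz_1$ ``exactly as in the proof of Lemma~\ref{lem:path-sprays}.'' In that proof, nonflatness gives $\span\{T_{f(t)}\Sgot:t\}=\c^n$, which \eqref{eq:CartanA} converts into spanning by the $V_j$. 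Your fibered analogue would instead need $\span\{T_{f(t)}\Sgot\cap\ker dz_1:t\}=\ker dz_1$, and this does \emph{not} follow from $\span\{T_{f(t)}\Sgot:t\}=\c^n$: abstractly, taking $W_t=\span\{e_1+te_4,\, e_2+te_3\}\subset\c^4$ and $H=\ker dz_1$, one has $\span\{W_t:t\}=\c^4$ while $\span\{W_t\cap H:t\}=\span\{e_2,e_3\}\subsetneq H$. Geometrically, $f$ nonflat forces $f/f_1\colon K_0\setminus f_1^{-1}(0)\to\Sgot\cap\{z_1=1\}$ to be nonconstant, but its image may still lie in a lower-dimensional subvariety of $\Sgot\cap\{z_1=1\}$ whose tangent spaces fail to span. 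Your observation that $\bigcup_{z\in\Sgot_*}(T_z\Sgot\cap\ker dz_1)$ spans $\ker dz_1$ is a statement about all of $\Sgot_*$, not about the specific curve $f(\wt C_i)$; to close the argument you would need a fiber-preserving analogue of Proposition~\ref{pro:nonflat} — perturbing $f$ with $f_1$ fixed so that the vertical tangent spaces along each $\wt C_i$ do span $\ker dz_1$ — and your outline does not supply it.

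A second, independent problem concerns the Mergelyan step. You propose applying Theorem~\ref{th:MTJI} to the Oka manifold $\Sgot\cap\{z_1=1\}$ ``via the biholomorphism $\Sgot_*\setminus\pi_1^{-1}(0)\cong\c_*\times(\Sgot\cap\{z_1=1\})$,'' but the map to be extended genuinely attains values in $\pi_1^{-1}(0)$: at each $q\in E$ you set $F|_{D_q}=h\circ f_1$, so $\pi_1(F(q))=f_1(q)=0$ and $F$ does not correspond to a map into $\c_*\times(\Sgot\cap\{z_1=1\})$ (indeed $F/F_1$ has a pole at $q$). This is exactly the obstruction the paper circumvents by working with sections of the possibly ramified fibration $E\to M$ and using the Oka principle for multivalued sections of ramified maps with Oka fibers, rather than Theorem~\ref{th:MTJI} for maps into a fixed Oka target. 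Finally, a smaller point: $f_1^{-1}(0)$ is closed and discrete but need not be finite, so $S^\sharp=S\cup\bigcup_{q\in E}D_q$ may be noncompact and hence not an admissible subset; the disks $D_q$ must be absorbed one exhaustion step at a time, as in the paper's inductive scheme, not all at once.
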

\begin{proof}
We adapt the ideas in \cite[Proof of Theorem 7.7]{AlarconForstneric2014IM}.
Set $\Sgot':=\Sgot\cap\{z_1=1\}$. By dilations we see that $\Sgot\setminus\{z_1=0\}$ is biholomorphic to $\Sgot'\times\c_*$ (and hence is Oka), and  the projection $\pi_1\colon \Sgot'\to\c$ is a trivial fiber bundle with Oka fiber $\Sgot'$ except over $0\in\c$. Write $(f_1,\wh f)=(f_1,f_2,\ldots,f_n)$, that is, $\wh f:=(f_2,\ldots,f_n)\colon S\to \c^{n-1}$. Since $f_1$ is holomorphic and nonconstant on $M$, its zero set $f_1^{-1}(0)=\{a_1,a_2,\ldots\}$ is a closed discrete subset of $M$. The pullback $f_1^*\pi_1\colon E=f^*\Sgot\to M$ of the projection $\pi_1\colon \Sgot\to\c$ is a trivial holomorphic fiber bundle with fiber $\Sgot'$ over $M\setminus f_1^{-1}(0)$, but it may be singular over the points $a_j\in f_1^{-1}(0)$. The map $\wh f\colon S\to\c^{n-1}$ satisfies $\wh f(x)\in \pi_1^{-1}(f_1(x))$ for all $x\in S$, so $\wh f$ corresponds to a section of $E\to M$ over the set $S$.

Now we need to approximate $\wh f$ uniformly by a section $E\to M$ solving the problem of periods and interpolation. (Except for the period and interpolation conditions, a solution is provided by the Oka principle for sections of ramified holomorphic maps with Oka fibers; see \cite{Forstneric2003FM} or \cite[\textsection 6.14]{Forstneric2017}.) We begin by choosing a local holomorphic solution on a small neighborhood of any point $a_j\in M\setminus S$ so that $\wh f(a_j)\neq 0$, and we add these neighborhoods to the domain of holomorphicity of $\wh f$. Then we need to approximate a holomorphic solution $\wh f$ on a smoothly bounded compact set $K\subset M$ by  a holomorphic one on a larger domain $L\subset M$ assuming that $K$ is a strong deformation retract of $L$ and $L\setminus K$ does not contain any point $a_j$. This can be done by applying the Oka principle for maps to the Oka fiber $G'$ of $\pi\colon G\to\c$ over $\c_*$. 
In the critical case we add a smooth Jordan arc $\alpha$ to the domain $K\subset M$ disjoint from the points $a_j$ and such that $K\cup\alpha$ is a strong deformation retract of the next domain. Next, we extend $\wh f$ smoothly over $\alpha$ so that the integral $\int_{\alpha} \wh f\theta$ takes the correct value by applying an analogous result of Lemma \ref{lem:path-periods} but keeping the first coordinate fixed; this reduces the proof to the noncritical case and concludes the proof of the lemma.
\end{proof}


\section{General position, completeness, and properness results}\label{sec:plus}

In this section we prove several results that flatten the way to the proof of Theorem \ref{th:main-intro3} in Section \ref{sec:maintheorem}. Thus, all the results in this section concern directed holomorphic immersions of open Riemann surfaces into $\c^n$; we point out that the methods of proof easily adapt to give analogous results for conformal minimal immersions into $\r^n$ (see Section \ref{sec:maintheoremMS}).

We begin with the following

\begin{definition}\label{def:GSI}
Let $\Sgot$ be a closed conical complex subvariety of $\c^n$  $(n\ge 3)$, $M$ be an open Riemann surface, and $S=K\cup\Gamma\subset M$ be an admissible subset (see Definition \ref{def:admissible}). By a {\em generalized $\Sgot$-immersion $S\to\c^n$} we mean a map $F\colon S\to\c^n$ of class $\Cscr^1(S)$ whose restriction to $K$ is an $\Sgot$-immersion of class $\Ascr^1(S)$ and the derivative $F'(t)$ with respect to any local real parameter $t$ on $\Gamma$ belongs to $\Sgot_*$.
\end{definition}

We now prove a Mergelyan type theorem for generalized $\Sgot$-immersions which follows from Lemmas \ref{lem:main-noncritical} and \ref{lem:MtFix}; it will be very useful in the subsequent results. 
\begin{proposition}\label{pro:Mergelyan-S}
Let $\Sgot\subset\c^n$ be as in Theorem \ref{th:MtT}. Let $M$ be a compact bordered Riemann surface and let $S=K\cup\Gamma\subset \mathring M$ be a very simple admissible Runge compact subset such that the kernel component $S_0$ of $S$ (see Definition \ref{def:simple}) is a strong deformation retract of $M$. Let $\Lambda\subset\mathring K$ be a finite subset and assume that $\Lambda\cap K'$ consists of at most a single point for each component $K'$ of $K$, $K'\neq K_0$, where $K_0$ is the kernel component of $K$. 
Given an integer $k\in\n$, every generalized $\Sgot$-immersion $F=(F_1,\ldots,F_n)\colon S\to\c^n$ which is nonflat on $\mathring K_0$ may be approximated in the $\Cscr^1(S)$-topology by $\Sgot$-immersions $\wt F=(\wt F_1,\ldots,\wt F_n)\colon M\to\c^n$ such that $\wt F-F$ has a zero of multiplicity $k\in\n$ at all points $p\in\Lambda$ and that $\wt F$ has no constant component function.

Furthermore, if $\Sgot\cap\{z_1=1\}$ is an Oka manifold the coordinate projection $\pi_1\colon \Sgot\to\c$ onto the $z_1$-axis admits a local holomorphic section $h$ near $z_1=0$ with $h(0)\neq 0$, $\Lambda\subset \mathring K_0$, and $F_1$ extends to a nonconstant holomorphic function $M\to\c$, then $\wt F$ may be chosen with $\wt F_1=F_1$.
\end{proposition}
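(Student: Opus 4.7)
The strategy is to reduce the problem to approximating a map into $\Sgot_*$ by passing from the generalized $\Sgot$-immersion $F$ to its Weierstrass-type derivative $f := dF/\theta \colon S \to \Sgot_*$, applying Lemma \ref{lem:main-noncritical} (or Lemma \ref{lem:MtFix} for the furthermore part) to obtain a holomorphic approximation $\wt f \colon M \to \Sgot_*$ satisfying the required jet-interpolation and period-vanishing conditions, and then integrating back to recover $\wt F$.

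More precisely, I would first define $f$ by $dF = f\theta$: on the interior of $K$ this is a holomorphic map into $\Sgot_*$, while on each arc of $\Gamma$ the relation $dF(\dot\gamma(t)) = f(\gamma(t))\cdot\theta(\dot\gamma(t))$ defines $f$ continuously with values in $\Sgot_*$ by the generalized $\Sgot$-immersion condition, so $f \in \Ascr(S,\Sgot_*)$; the nonflatness hypothesis on $F$ over $\mathring K_0$ transfers directly to $f$. After adding, if necessary, an auxiliary point to each component $K_i\neq K_0$ of $K$ contained in $S_0$ that does not already meet $\Lambda$ (so as to match the indexing of Lemma \ref{lem:main-noncritical}), I would fix a base point $p_0 \in \mathring K_0\setminus\Lambda$, choose pairwise disjoint (except at $p_0$) oriented Jordan arcs $C_p \subset S$ from $p_0$ to each $p\in\Lambda$, and complete the family by smooth closed curves $C_{m'+1},\ldots,C_l \subset \mathring K_0$ representing a basis of $H_1(S_0;\z) \cong H_1(M;\z)$ whose union with the arcs $C_p$ is Runge in $M$. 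Viewing $M$ as a smoothly bounded compact Runge domain in a slightly larger open Riemann surface $\wt M$ over which $\theta$ extends without zeros, and setting $L:=M$, part (ii) of Lemma \ref{lem:main-noncritical} applied to these data furnishes $\wt f \colon M \to \Sgot_*$ of class $\Ascr(M)$, uniformly close to $f$ on $S$, with $\wt f - f$ having a zero of multiplicity $k$ at every $p\in\Lambda$, with $\int_{C_r}(\wt f - f)\theta = 0$ for all $r=1,\ldots,l$, and with no identically vanishing component.

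The map $\wt F(p) := F(p_0) + \int_{p_0}^p \wt f \theta$ is then single-valued on $M$, since the periods of $\wt f\theta$ on the homology basis $C_{m'+1},\ldots,C_l$ coincide with those of $f\theta$, which are zero because $F$ is single-valued on $S$. It is an $\Sgot$-immersion because $d\wt F = \wt f\theta$ with $\wt f$ valued in $\Sgot_*$; it is $\Cscr^1(S)$-close to $F$ because $d(\wt F - F) = (\wt f - f)\theta$ is uniformly small and $\wt F - F$ is its primitive along paths in $S$; it agrees with $F$ at each $p\in\Lambda$ since $\wt F(p) - F(p) = \int_{C_p}(\wt f - f)\theta = 0$; and because $d(\wt F - F)$ has a zero of multiplicity $k$ at $p$, the primitive $\wt F - F$ acquires a zero of multiplicity $k+1 \geq k$ there. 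Nonconstancy of every component $\wt F_j$ follows from $d\wt F_j = \wt f_j \theta \not\equiv 0$ on $M$.

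For the furthermore statement the only modification is to use Lemma \ref{lem:MtFix} in place of Lemma \ref{lem:main-noncritical}: the added hypotheses on $\Sgot\cap\{z_1=1\}$ and on the local holomorphic section of $\pi_1$ are precisely its requirements, and $\Lambda \subset \mathring K_0$ is part of the assumption. Since $F_1$ extends to a nonconstant holomorphic function on $M$, $f_1 = dF_1/\theta$ extends to a holomorphic function on $M$ that does not vanish identically, which is exactly the input Lemma \ref{lem:MtFix} needs to produce $\wt f$ with $\wt f_1 \equiv f_1$ on all of $M$; integrating then yields $\wt F_1 \equiv F_1$ on $M$. The main technical point throughout is the translation between $F$ and its derivative $f$ together with the verification that $\Cscr^1$-approximation of $F$ on the one-dimensional set $S$ is equivalent to uniform approximation of $f$ on $S$; this is routine, and the substantive work is already encapsulated in Lemmas \ref{lem:main-noncritical} and \ref{lem:MtFix}.
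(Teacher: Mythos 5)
Your overall strategy --- pass to $f=dF/\theta$, apply Lemma \ref{lem:main-noncritical} (resp.\ Lemma \ref{lem:MtFix} for the ``furthermore'' part) and integrate back --- is exactly the paper's, and your handling of exactness, of the $\Cscr^1$ estimate, and of the jet order (applying the lemma with $k$ instead of $k-1$, which only yields a higher-order contact) is fine. However, there is a genuine gap: you implicitly assume that $S$ is connected. A very simple admissible set is in general of the form $S=S_0\cup K'$, where $K'$ is a union of compact disks \emph{disjoint} from the kernel component $S_0$, and the hypotheses of the Proposition allow $\Lambda$ to meet these disks; this is precisely the situation in which the Proposition is invoked in Section \ref{sec:maintheorem}, where $S=M_{j-1}\cup\bigcup_p\Omega_p$ with the $\Omega_p$ disjoint from $M_{j-1}$ and carrying the new interpolation points. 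For such a point $p$ there is no arc $C_p\subset S$ from $p_0$ to $p$, and Lemma \ref{lem:main-noncritical} only interpolates at points lying in components of $K$ contained in $S_0$, so your application of it breaks down. Worse, even if $\Lambda\subset S_0$, the formula $\wt F=F(p_0)+\int_{p_0}^{\cdot}\wt f\theta$ need not approximate $F$ on the disks of $K'$: uniform closeness of $\wt f$ to $f$ on $S$ controls $d\wt F-dF$ there but not the additive constant, because $F|_{K'}$ is not recovered by integrating $f\theta$ from $p_0$ (there is no path in $S$ along which to integrate).

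The missing step is the paper's preliminary reduction: join each component of $S\setminus S_0$ to $K_0$ by a Jordan arc $\gamma_i$ so that $S'=S\cup\bigcup_i\gamma_i$ is a connected very simple admissible set, and use Lemma \ref{lem:path-periods} to extend $f$ over the arcs $\gamma_i$ with prescribed integrals so that $F(p)=F(p_0)+\int_{p_0}^{p}f\theta$ holds for all $p\in S'$. After this, every point of $\Lambda$ lies in a component of $K$ contained in the kernel component of $S'$ (which is all of $S'$), the arcs $C_p$ exist inside $S'$, and the rest of your argument goes through. In the ``furthermore'' part the same extension must be performed keeping the first component equal to $dF_1/\theta$ before invoking Lemma \ref{lem:MtFix}, whose hypotheses in any case require $S$ to be connected.
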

We point out that an analogous result of the above proposition remains true for arbitrary admissible subsets; we shall not prove the most general statement for simplicity of exposition. Anyway, Proposition \ref{pro:Mergelyan-S} will suffice for the aim of this paper. 
\begin{proof}
Let $\theta$ be a holomorphic $1$-form vanishing nowhere on $M$. Set $f=dF/\theta\colon S\to\Sgot_*$ and observe that $f$ is nonflat on $\mathring K_0$ and of class $\Ascr(S)$, and that $f\theta$ is exact on $S$. Fix a point $p_0\in\mathring K_0\setminus\Lambda$. If $S$ is not connected then $S\setminus S_0$ consists of finitely many pairwise disjoint, smoothly bounded compact disks $K_1,\ldots,K_m$. For each $i\in\{1,\ldots,m\}$ choose a smooth Jordan arc $\gamma_i\subset\mathring M$ with an endpoint in $(bK_0)\setminus\Gamma$, the other endpoint in $bK_i$, and otherwise disjoint from $S$. Choose these arcs so that $S':=S\cup(\bigcup_{i=1}^m\gamma_i)$ is an admissible subset of $M$. It follows that $S'$ is connected, very simple, and a strong deformation retract of $M$. By Lemma \ref{lem:path-periods} we may extend $f$ to a map $f'\colon S'\to\Sgot_*$ of class $\Ascr(S')$ such that $F(p_0)+\int_{p_0}^p f\theta =F(p)$ for all $p\in S'$. From now on we remove the primes and assume without loss of generality that $S$ is connected. 

For each $p\in\Lambda$ choose a smooth Jordan arc $C_p\subset S$ joining $p_0$ with $p$ such that $C_p\cap C_q=\{p_0\}$ for all $p\neq q\in\Lambda$. By Lemma \ref{lem:main-noncritical}-{\it ii)} applied to the set $S\subset M$, the map $f$, the integer $k-1\geq 0$, and the arcs $C_p$, $p\in\Lambda$, we may approximate $f$ uniformly on $S$ by a holomorphic map $\wt f\colon M\to\Sgot_*$ such that:
\begin{enumerate}[\rm (a)]
\item $\wt f\theta$ is exact; recall that $f\theta$ is exact on ($S$ and hence on) $S_0$ and that $S_0$ is a strong deformation retract of $M$.
\item $F(p_0)+\int_{C_p}\wt f\theta = F(p_0)+\int_{C_p}f\theta = F(p)$ for all $p\in\Lambda$.
\item $\wt f-f$ has a zero of multiplicity $k-1$ at all points $p\in\Lambda$.
\item No component function of $\wt f$ vanishes everywhere on $M$.
\end{enumerate}
Then, property {\rm (a)} ensures that the map $\wt F\colon M\to\c^n$ defined by
\[
	\wt F(p):= F(p_0) +\int_{p_0}^p\wt f\theta,\quad p\in M,
\]
is a well-defined $\Sgot$-immersion and is as close as desired to $F$ in the $\Cscr^1(S)$-topology. Moreover, properties {\rm (b)} and {\rm (c)} guarantee that $\wt F-F$ has a zero of multiplicity $k$ at all points of $\Lambda$, whereas {\rm (d)} ensures that $\wt F$ has no constant component function. This concludes the first part of the proof.

The second part of the lemma is proved in an analogous way but using Lemma \ref{lem:MtFix} instead of Lemma \ref{lem:main-noncritical}-{\it ii)}. Moreover, in order to reduce the proof to the case when $S$ is connected, we need to extend $f$ to a map $f'$ on $S'$ as above such that the first component of $f'$ equals $dF_1/\theta$; this is accomplished by a suitable analogous of Lemma \ref{lem:path-periods}, we leave the obvious details to the interested reader. This concludes the proof.
\end{proof}


\subsection{A general position theorem}\label{ss:gp}

In this subsection we prove a desingularization result with jet-interpolation for directed immersions of class $\Ascr^1$ on a compact bordered Riemann surface. We use Notation \ref{not:}. 
%
%

\begin{theorem}\label{th:gp-S}
Let $M$ be a compact bordered Riemann surface and $\Lambda\subset \mathring M$ be a finite set. Let $F\colon M\to\c^n$ $(n\ge 3)$ be an $\Sgot$-immersion of class $\Ascr^1(M)$ such that $F|_{\Lambda}$ is injective. Then, given $k\in\n$, $F$ may be approximated uniformly on $M$ by $\Sgot$-embeddings $\wt F\colon M\to\c^n$ of class $\Ascr^1(M)$ such that $\wt F-F$ has a zero of multiplicity $k$ at $p$ for all $p\in\Lambda$.
\end{theorem}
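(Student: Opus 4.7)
The strategy is the Abraham transversality method, adapted to the directed immersion setting and made compatible with the $k$-jet conditions at the points of $\Lambda$. The plan is to embed $F$ as the central value $\zeta=0$ of a holomorphic family $F_\zeta\colon M\to\c^n$ of $\Sgot$-immersions parameterized by a ball $W$ around the origin in $\c^N$, constructed so that
\begin{enumerate}[\rm (a)]
\item $F_0=F$ and $F_\zeta-F$ has a zero of multiplicity $k$ at every $p\in\Lambda$ for all $\zeta\in W$;
\item the \emph{difference map} $\Delta F\colon (M\times M\setminus \Delta_M)\times W\to\c^n$, $(p,q,\zeta)\mapsto F_\zeta(p)-F_\zeta(q)$, is a submersion at every point of $(M\times M\setminus\Delta_M)\times\{0\}$.
\end{enumerate}
Once this spray is in hand, Abraham's transversality theorem yields a Baire-dense set of $\zeta\in W$ for which $\Delta F_\zeta$ is transverse to $\{0\}\subset\c^n$; since $\dim_\r(M\times M\setminus\Delta_M)=4<2n$ (as $n\ge 3$), transversality forces avoidance. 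Hence for generic $\zeta$ close to $0$, the map $F_\zeta\colon M\to\c^n$ is injective on $M\setminus\Lambda$. The jet condition (a) guarantees $F_\zeta|_\Lambda=F|_\Lambda$, which is injective by hypothesis, and condition (a) together with the openness of the immersion property (for $\zeta$ small) ensures that $F_\zeta$ is an $\Sgot$-immersion of class $\Ascr^1(M)$. Since $M$ is compact, an injective immersion is automatically an embedding, so $\wt F:=F_\zeta$ meets all the requirements.

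To construct the spray, first use Proposition \ref{pro:nonflat} (or slightly enlarge $M$ inside an ambient open Riemann surface and apply Proposition \ref{pro:Mergelyan-S}) to reduce to the case where $F$ is nonflat. Writing $f=dF/\theta\colon M\to\Sgot_*$ for a holomorphic nonvanishing $1$-form $\theta$ on a neighborhood of $M$, and using the vector fields $V_1,\dots,V_N$ of Notation \ref{not:}, set
\[
f_\zeta(p):=\phi^1_{\zeta_1 h_1(p)}\circ\cdots\circ\phi^N_{\zeta_N h_N(p)}(f(p)),
\]
where each $h_j$ is a holomorphic function on a neighborhood of $M$ vanishing to order $k$ at every point of $\Lambda$. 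By Lemma \ref{lem:jet}, $f_\zeta-f$ has a zero of multiplicity $k-1$ at every $p\in\Lambda$; hence after integration, the spray $F_\zeta(p):=F(p_0)+\int_{p_0}^p f_\zeta\theta$ will satisfy condition (a). For this integral to define a single-valued map on $M$, the $1$-form $f_\zeta\theta$ must be exact; this is arranged exactly as in the proof of Lemma \ref{lem:main-noncritical} by first solving the period equation via the period-dominating spray of Lemma \ref{lem:pathRiemann} (which is consistent with the jet requirement because the coefficient functions $h_{i,j}$ there can be chosen to vanish to order $k$ at $\Lambda$), reserving the remaining parameters to produce the transversality in (b).

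The main technical hurdle is condition (b). The essential point is a two-point version of Lemma \ref{lem:path-sprays}: given distinct points $p_*,q_*\in M$, one must exhibit holomorphic coefficients $h_j$ (vanishing to order $k$ on $\Lambda$ and giving zero periods around every cycle of $M$) so that the partials $\partial F_\zeta/\partial\zeta_j|_{\zeta=0}$ evaluated at $p_*$ and at $q_*$ together span $\c^n\oplus\c^n$ modulo the diagonal, i.e.\ the derivative of $\Delta F$ with respect to $\zeta$ surjects onto $\c^n$. This is obtained by choosing the $h_j$ supported in small disjoint neighborhoods of short Jordan arcs emanating from $p_*$ (resp.\ $q_*$) along which $f$ is nowhere flat, and using \eqref{eq:CartanA} to get the required spanning in $T_{f(p_*)}\Sgot$ (resp.\ $T_{f(q_*)}\Sgot$); the period-killing step proceeds as in Lemma \ref{lem:main-noncritical}. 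By compactness of the pair spaces away from the diagonal, finitely many such local period-killed sprays can be concatenated into a single global holomorphic family for which (b) holds pointwise on $M\times M\setminus\Delta_M$; near the diagonal the immersion property of $F$ together with smallness of $\zeta$ prevents self-intersections on a definite neighborhood of $\Delta_M$, so only a compact subset of $M\times M\setminus\Delta_M$ needs to be controlled by transversality. I expect the fiddliest part to be precisely this globalization of the difference-dominating spray, combined with simultaneously enforcing the period, jet, and tangential-to-$\Sgot$ constraints.
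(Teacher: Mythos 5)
Your overall strategy — reduce to the nonflat case via Proposition \ref{pro:nonflat}, build a holomorphic, period-corrected spray $F_\zeta$ of $\Sgot$-immersions with the prescribed $k$-jet at $\Lambda$, and run Abraham's transversality argument on the difference map — is exactly the route the paper takes. However, your condition (b) contains a genuine error: submersivity of $\Delta F$ cannot hold at every point of $(M\times M\setminus\Delta_M)\times\{0\}$. Condition (a) forces $F_\zeta(p)=F(p)$ for every $p\in\Lambda$ and every $\zeta$, so at any pair $(p,q)\in(\Lambda\times\Lambda)\setminus\Delta_M$ the map $\zeta\mapsto F_\zeta(p)-F_\zeta(q)$ is constant; the $\zeta$-differential of $\Delta F$ there is identically zero, not surjective. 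Your concluding sentence does restrict attention to "a compact subset of $M\times M\setminus\Delta_M$," but you attribute the need for that restriction solely to a tube around the diagonal, not to the finitely many off-diagonal points of $\Lambda\times\Lambda$, so the conflict with (a) remains. The paper resolves it by choosing an open neighborhood $U$ of $\Delta_M\cup(\Lambda\times\Lambda)$, checking that $\delta F\neq 0$ on $\overline U\setminus\Delta_M$ (immersion property near the diagonal, injectivity of $F|_\Lambda$ near $\Lambda\times\Lambda$), and only requiring transversality of the perturbed difference map on the compact set $M\times M\setminus U$. Injectivity of $F|_\Lambda$ is therefore not a bonus that happens to be preserved by the spray; it is precisely what handles the pairs at which submersivity is structurally impossible.

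Two smaller remarks. First, your accounting with Lemma \ref{lem:jet} is off by one: if $h_j$ vanishes to order $k$ at $p\in\Lambda$, that lemma yields $f_\zeta-f$ vanishing to order $k$ (not $k-1$); the paper instead takes the holomorphic coefficients to vanish to order $k-1$, so that $\Psi_f(\zeta,\cdot)-f$ has order $k-1$ and, after integration together with the period conditions $\int_{C_p}(\wt f-f)\theta=0$, the map difference $H_F(\zeta,\cdot)-F$ has order exactly $k$ as required. Second, your "two-point" variant of Lemma \ref{lem:path-sprays} (perturbing near disjoint arcs at $p_*$ and $q_*$ so that the difference of partials spans $\c^n$) can be made to work, but the paper uses a cleaner device in its Lemma \ref{lem:Hpq}: for a pair $(p,q)$ off $\Lambda\times\Lambda$, append $p$ to $\Lambda$ before constructing the local family, so that $H(\zeta,p)$ is pinned and the $\zeta$-derivative of the difference reduces to that of $H(\zeta,q)$ alone; the one-point spray supported on the interior of an arc $C_q$ from the base point to $q$ then suffices. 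This sidesteps the need to control the interaction of two localized perturbations and makes the period-correction by Lemma \ref{lem:main-noncritical}-{\it i)} straightforward.
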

\begin{proof}
Proposition \ref{pro:nonflat} allows us to assume without loss of generality that $F\colon  M\to\c^n$ is non-flat.
We assume that $M$ is a smoothly bounded compact domain in an open Riemann surface $R$. We associate to $F$ the difference map
\[
\delta F\colon M\times M\to\c^n,\quad \delta F(x,y)=F(y)-F(x).
\]
Obviously, $F$ is injective if and only if $(\delta F)^{-1}(0)=D_M=\{(x,x) : x\in M\}$.

Since $F$ is an immersion and $ F|_{\Lambda}\colon\Lambda\to\c^n$ is injective, there is an open neighborhood $U\subset M\times M$ of $D_M\cup (\Lambda\times\Lambda)$ such that $\delta F\neq 0$ everywhere on $\overline{U}\setminus D_M$. To prove the theorem is suffices to find arbitrarily close to $F$ another $\Sgot$-immersion $\wt F\colon M\to\c^n$ of class $\Ascr^1(M)$ such that $\wt F-F$ has a zero of multiplicity $k$ at all points of $\Lambda$ whose difference map $\delta \wt F|_{M\times M\setminus U}$ is transverse to the origin. Indeed, since $\dim_{\c}M\times M = 2 <n$, this will imply that $\delta \wt F$ does not assume the value zero on $M\times M\setminus U$, so $\wt F(y)\neq \wt F(x)$ when $(x,y)\in M\times M\setminus U$. On the other hand, if $(x,y)\in \overline U\setminus D_M$ then $\wt F(y)\neq \wt F(x)$ provided that $\wt F$ is sufficiently close to $F$.

To construct such an $\Sgot$-immersion we will use the standard transversality argument by Abraham \cite{Abraham1963BAMS}. We need to find a neighborhood $\Vcal\subset \c^N$  of the origin in a complex Euclidean space and a map $H\colon\Vcal\times M\to\c ^n$ of class $\Ascr^1(\Vcal\times M)$ such that 
\begin{enumerate}[\rm (a)]
\item $H(0,\cdot)=F$,
\item $H-F$ has a zero of multiplicity $k$ at $p$ for all $p\in\Lambda$, and 
\item the difference map $\delta H\colon \Vcal\times M \times M\to \c ^n$, defined by
\[
     \delta H(\zeta,x,y)= H(\zeta,y)-H(\zeta,x),\quad \zeta\in\Vcal,\ x,y\in M,
\]
is a submersive family of maps in the sense that the partial differential
\[
      d_\zeta\delta H(\zeta,x,y)|_{\zeta=0}\colon T_0\c^N\cong\c^N\to\c^n
\]
is surjective for any $(x,y)\in M\times M\setminus U$.
\end{enumerate}
By openness of the latter condition and compactness of $M\times M\setminus U$ it follows that the partial differential $d_\zeta \delta H$ is surjective for all $\zeta$ in a neighborhood $\Vcal'\subset \Vcal$ of the origin in $\c^N$. Hence, the map $\delta H\colon M \times M\setminus U\to \c^n$ is transverse to any submanifold of $\c^n$, in particular, to the origin $\{0\}\subset\c^n$. The standard argument then shows that for a generic member $H(\zeta,\cdot)\colon M\to\c^n$ of this family, the difference map $\delta H(\zeta,\cdot)$ is also transverse to $0\in\c^n$ on $M\times M\setminus U$. Choosing such a $\zeta$ sufficiently close to $0$ we then obtain the desired $\Sgot$-embedding $\wt F:=H(\zeta,\cdot)$.

To construct a map $H$ as above we fix a nowhere vanishing holomorphic $1$-form $\theta$ on $R$ and write $dF=f\theta$, where $f\colon M\to \Sgot_*$ is a map of class $\Ascr^1(M)$. 
We begin with the following.

\begin{lemma}\label{lem:Hpq}
For any point $(p,q)\in M\times M\setminus(D_M\cup(\Lambda\times\Lambda))$ there is a deformation family $H=H^{(p,q)}(\zeta,\cdot)$ satisfying conditions {\rm (a)} and {\rm (b)} above, with $\zeta\in\c^n$, such that the differential $d_\zeta \delta H(\zeta,p,q)|_{\zeta=0}\colon \c^n\to\c^n$ is an isomorphism. 
\end{lemma}

For the proof we adapt the arguments by Alarc\'on and Forstneri\v c in \cite[Lemma 6.1]{AlarconForstneric2014IM} in order to guarantee also the jet-interpolation; i.e. condition {\rm (b)} of the map $H$.

\begin{proof}
Pick $(p,q)\in M\times M\setminus(D_M\cup(\Lambda\times\Lambda))$. We distinguish cases.

\smallskip

\noindent{\em Case 1: Assume that $\{p,q\}\cap\Lambda\neq \emptyset$.} Assume that $p\in\Lambda$ and hence $q\notin\Lambda$; otherwise we reason in a symmetric way. Write $\Lambda=\{p=p_1,\ldots,p_{l'}\}$. Pick a point $p_0\in M\setminus(\Lambda\cup\{q\})$ and choose closed loops $C_j\subset M\setminus\Lambda$, $j=1,\ldots,l''$, forming a basis of $H_1(M,\z)=\z^{l''}$, and
smooth Jordan arcs $C_{l''+j}$ joining $p_0$ with $p_j$, $j=1,\ldots,l'$, such that setting $l:=l'+l''$, we have that $C_i\cap C_j=\{p_0\}$ for any $i,j\in\{1,\ldots,l\}$ and that $C:=\bigcup_{j=1}^l C_j$ is a Runge set in $M$. Also choose another smooth Jordan arc $C_q$ joining $p_0$ with $q$ and verifying $C\cap C_q=\{p_0\}$. Finally let $\gamma_j:[0,1]\to C_j$ ($j=1,\ldots,l$) and $\gamma:[0,1]\to C_q$ be smooth parametrizations of the respective curves verifying $\gamma_j(0)=\gamma_j(1)=p_0$ for $j=1,\ldots,l''$, $\gamma_j(0)=p_0$ and $\gamma_j(1)=p_j$ for $j=l''+1,\ldots,l$, and $\gamma(0)=p_0$ and $\gamma(1)=q$.

Since $F$ is nonflat, there exist tangential fields $V_1,\ldots,V_n$ on $\Sgot$, vanishing at $0$, and points $x_1,\ldots,x_n\in C_q\setminus\{p_0,q\}$ such that, setting $z_i=f(x_i)\in\Sgot_*$, the vectors $V_1(z_1),\ldots,V_n(z_n)$ span $\c^n$. Let $t_i\in (0,1)$ be such that $\gamma(t_i)=x_i$ and $\phi_t^i$ be the flow of the vector field $V_i$ for small values of $t\in\c$ in the sense of Notation \ref{not:}.
Consider for any $i=1,\ldots,n$ a smooth function $h_i\colon C\cup C_q\to \r_+\subset\c$ vanishing on $C\cup\{q\}$; its values on the relative interior of $C_q$ will be specified later. As in the proof of Lemma \ref{lem:main-noncritical}, set $\zeta=(\zeta_1,\ldots,\zeta_n)\in\c^n$ and consider the map
\[
     \psi(\zeta,x)=\phi^1_{\zeta_1h_1(x)}\circ\cdots\circ\phi^n_{\zeta_nh_n(x)}(f(x))\in\Sgot,
     \quad x\in C\cup C_q,
\]
which is holomorphic in $\zeta\in\c^n$. Note that $\psi(0,\cdot)=f\colon M\to\Sgot_*$ (hence $\psi(\zeta,\cdot)$ does not vanish for $\zeta$ in a small neighborhood of the origin) and $\psi(\zeta,x)=f(x)$ for all $x\in C\cup\{q\}$. It follows that 
\[
     \left.\frac{\di \psi(\zeta,x)}{\di \zeta_i}\right|_{\zeta=0}=h_i(x)V_i(f(x)),\quad i=1,\ldots,n.
\]
We choose $h_i$ with support on a small compact neighborhood of $t_i\in(0,1)$ in such a way that
\begin{equation}\label{eq:hivizi}
\int_0^1 h_i(\gamma(t))V_i(f(\gamma(t))) \theta(\gamma(t),\dot{\gamma}(t))\,dt\approx V_i(z_i)\,\theta(\gamma(t_i),\dot{\gamma}(t_i)).
\end{equation}
Assuming that the neighborhoods are sufficiently small then the approximation in \eqref{eq:hivizi} is close enough so that, since the vectors on the right side above form a basis of $\c^n$, the ones in the left side also do.

Fix a number $\epsilon >0$. Theorem \ref{th:MTJI} furnishes holomorphic functions $g_i\colon M\to\c$ such that
\begin{equation}\label{eq:g_ik-1}
     \text{$g_i$ has a zero of multiplicity $k-1$ at all points of $\Lambda$}
\end{equation}
and
\[
     \sup\limits_{C\cup C_q}|g_i-h_i|<\epsilon,\quad i=1,\ldots,n.
\]
Following the arguments in the proof of Lemma \ref{lem:main-noncritical}, we define holomorphic maps
\begin{eqnarray*}
\Psi(\zeta,x,z) & = & \phi^1_{\zeta_1g_1(x)}\circ\cdots\circ\phi^n_{\zeta_ng_n(x)}(z)\in\Sgot,
\\
\Psi_f(\zeta,x) & = & \Psi(\zeta,x,f(x))\in\Sgot_*,
\end{eqnarray*}
where $x\in M$, $z\in\Sgot$, and $\zeta$ belongs to a sufficiently small neighborhood of the origin in $\c^n$. Observe that $\Psi_f(0,\cdot)=f$.
In view of \eqref{eq:hivizi}, if $\epsilon>0$ is small enough then we have that the vectors
\begin{equation}\label{eq:vectors}
   \left.\frac{\di }{\di \zeta_i}\right|_{\zeta=0}\int_0^1 \Psi_f(\zeta,\gamma(t))\theta(\gamma(t),\dot{\gamma}(t))\ dt
   =\int_0^1 g_i(\gamma(t))V_i(f(\gamma(t))) \theta(\gamma(t),\dot{\gamma}(t))dt,
\end{equation}
$i=1,\ldots,n$,
are close enough to $ V_i(z_i)\theta(\gamma(t_i),\dot{\gamma}(t_i))$ so that they also form a basis of $\c^n$.

To finish the proof it remains to perturb $\Psi_f$ in order to solve the period problem and ensure the jet interpolation at the points of $\Lambda$. From the Taylor expansion of the flow of a vector field it follows that
\[
\Psi_f(\zeta,x)=f(x)+\sum\limits_{i=1}^{n}\zeta_ig_i(x)V_i(f(x))+O(|\zeta|^2).
\]
Since $|g_i|<\epsilon$ on $C$ (recall that $h_i=0$ on $C$), the integral of $\Psi_f$ over the curves $C_1,\ldots,C_l$ can be estimated by
\begin{equation}\label{eq:intCj}
\left| \int_{C_j}\big(\Psi_f(\zeta,\cdot)-f\big)\theta\right|  =  \left| \int_{C_j}\Psi_f(\zeta,\cdot)\theta\right| \leq \eta_0\epsilon|\zeta|,\quad j=1,\ldots,l''
\end{equation}
(recall that $\int_{C_j}f\theta=\int_{C_j} dF=0$ for all $j=1,\ldots, l''$, since these curves are closed),
\begin{multline}\label{eq:intCj'}
\left| \int_{C_j}\big(\Psi_f(\zeta,\cdot)-f\big)\theta\right|  = 
\left| F(p_0)-\Big(F(p_j)-\int_{C_j}\Psi_f(\zeta,\cdot)\theta\Big)\right| \leq 
\\
\leq \eta_0\epsilon|\zeta|,\quad j=l''+1,\ldots,l,
\end{multline} 
for some constant $\eta_0>0$ and sufficiently small $\zeta\in \c^n$. Furthermore, \eqref{eq:g_ik-1} guarantees that 
\begin{equation}\label{eq:g_ik-1'}
    \text{$\Psi_f(\zeta,\cdot)-f$ has a zero of multiplicity $k-1$ at all points of $\Lambda$}
\end{equation}
for $\zeta$ in a small neighborhood of the origin (cf.\ Lemma \ref{lem:jet}).

Now, Lemma \ref{lem:main-noncritical}-{\it i)} furnishes holomorphic maps $\Phi(\wt\zeta,x,z)$ and $\Phi_f(\wt\zeta,x)=\Phi(\wt\zeta,x,f(x))$ with the parameter $\wt\zeta$ in a small neighborhood of $0\in\c^{\wt N}$ for some large $\wt N\in\n$ and $x\in M$ such that $\Phi(0,x,z)=z\in\Sgot$ for all $x\in M$ and 
\begin{equation}\label{eq:rara}
     \Phi_f(0,\cdot)=\Phi_{\Psi_f(0,\cdot)}(0,\cdot)=f,
\end{equation} 
and the differential of the associated period map $\wt\zeta\mapsto\Pcal(\Phi_f(\wt \zeta,\cdot))\in\c^{ln}$ (see \eqref{eq:Periods}) at the point $\wt\zeta=0$ has maximal rank equal to $ln$. The same is true if we allow that $f$ vary locally near the given initial map. 
Thus, replacing $f$ by $\Psi_f(\zeta,\cdot)$ and considering the map
\[
\c^{\wt N}\times\c^n\times M\ni(\wt\zeta,\zeta,x)\mapsto\Phi(\wt\zeta,x,\Psi_f(\zeta,x))\in\Sgot_*
\]
defined for $x\in M$ and $(\wt\zeta,\zeta)$ in some sufficiently small neighborhood of $0\in\c^{\wt N}\times\c ^n$, the implicit function theorem provides a holomorphic map $\wt\zeta=\rho(\zeta)$ near $\zeta=0\in\c ^n$ with $\rho(0)=0\in\c ^{\wt N}$ such that the map defined by $\Phi(\rho(\zeta),x,\Psi_f(\zeta,x))$ satisfies
\begin{enumerate}[\rm (i)]
\item $\Pcal(\Phi(\rho(\zeta),\cdot,\Psi_f(\zeta,\cdot)))=\Pcal(\Phi(0,\cdot,\Psi_f(0,\cdot)))=\Pcal(\Psi_f(0,\cdot))=\Pcal(f)$, and
\item $\Phi(\rho(\zeta),\cdot,\Psi_f(\zeta,\cdot))-\Psi_f(\zeta,\cdot)$ has a zero of multiplicity $k-1$ at all points of $\Lambda$.
\end{enumerate}
Condition {\rm (ii)} together with \eqref{eq:g_ik-1'} ensure that
\begin{equation}\label{eq:g_ik-1'''}
    \text{$\Phi(\rho(\zeta),\cdot,\Psi_f(\zeta,\cdot))-f$ has a zero of multiplicity $k-1$ at all points of $\Lambda$}
\end{equation}
for all $\zeta$ in a small neighborhood of $0\in\c^n$.
Obviously the map $\rho=(\rho_1,\ldots,\rho_n)$ also depends on $f$. It follows that the integral
\begin{equation}\label{eq:HF}
H_F(\zeta,x)=F(p_0)+\int_{p_0}^x \Phi(\rho(\zeta),\cdot,\Psi_f(\zeta,\cdot))\theta
\end{equation}
is independent of the choice of the arc from $p_0$ to $x\in M$. Moreover,
\begin{equation}\label{eq:HF=F}
      H_F(0,\cdot)=F
\end{equation} 
(see \eqref{eq:rara}) and $H_F(\zeta,\cdot)$ is an $\Sgot$-immersion of class $\Ascr^1(M)$ for every $\zeta\in\c^n$ sufficiently close to zero such that 
\begin{equation}\label{eq:H_F-interpola}
     H_F(\zeta,\cdot)=F\quad  \text{on $\Lambda$}
\end{equation}
(see {\rm (i)}). In addition, from equations \eqref{eq:intCj} and \eqref{eq:intCj'} we have
\[
|\rho(\zeta)| < \eta_1\epsilon|\zeta|
\]
for some $\eta_1>0$. If we call $\wt V_j$ the vector fields and $\wt g_j$ the functions involved in the construction of the map $\Phi$ (see Lemma \ref{lem:main-noncritical}), the above estimate gives
\[
\left| \Phi(\rho(\zeta),x,\Psi_f(\zeta,x))-\Psi_f(\zeta,x)\right|=\left|\sum \rho_j(\zeta)\wt g_j(x)\wt V_j(\Psi_f(\zeta,x))+O(|\zeta|^2) \right| <\eta_2\epsilon|\zeta| 
\]
for some $\eta_2>0$ and all $x\in M$ and all $\zeta$ near the origin in $\c^n$. Clearly, applying this estimate to the arc $C_q$ we have
\[
\left|  \int_0^1\Phi(\rho(\zeta),\gamma(t),\Psi_f(\zeta,\gamma(t)))\theta(\gamma(t),\dot{\gamma}(t))-\int_0^1\Psi_f(\zeta,\gamma(t))\theta(\gamma(t),\dot{\gamma}(t))\right| <\eta_3\epsilon|\zeta|
\] 
for some $\eta_3>0$. Finally, choosing $\epsilon>0$ small enough, the derivatives
\[
\left. \frac{\di}{\di\zeta_i}\right|_{\zeta=0}\int_0^1 \Phi(\rho(\zeta),\gamma(t),\Psi_f(\zeta,\gamma(t)))\,\theta(\gamma(t),\dot{\gamma}(t))\in\c^n,\quad i=1,\ldots,n,
\]
are so closed to the vectors \eqref{eq:vectors} that they also form a basis of $\c^n$. From the definition of $H_F$, \eqref{eq:HF}, and \eqref{eq:H_F-interpola}, we have
\begin{eqnarray*}
\delta H_F(\zeta,p,q) & = & H_F(\zeta,q)-H_F(\zeta,p)
\\
 & = & H_F(\zeta,q)-F(p)
\\
& = & F(p_0)-F(p)+\int_0^1 \Phi(\rho(\zeta),\gamma(t),\Psi_f(\zeta,\gamma(t)))\theta(\gamma(t),\dot{\gamma}(t)),
\end{eqnarray*}
and hence the partial differential
\[
\left. \frac{\di}{\di\zeta}\right|_{\zeta=0}\delta H(\zeta,p,q)\colon \c^n\to \c ^n
\]
is an isomorphism. This, \eqref{eq:HF=F}, {\rm (ii)}, and \eqref{eq:H_F-interpola} show that $H$ satisfies the conclusion of the lemma.

\smallskip

\noindent{\em Case 2: Assume that $\{p,q\}\cap\Lambda= \emptyset$.} In this case setting $\Lambda':=\Lambda\cup\{p\}$ reduces the proof to Case 1. This proves the lemma.
\end{proof}

The family $H_F$ depending on $F$ given in \eqref{eq:HF} is holomorphically dependent also on $F$ on a neighborhood of a given initial $\Sgot$-immersion $F_0$. In particular, if $F(\xi,\cdot)\colon M\to\c^n$ is a family of holomorphic $\Sgot$-immersions depending holomorphically on $\xi\in\c$ such that $F(\xi,\cdot)-F$ has a zero of multiplicity $k$ at all points $p\in\Lambda$ for any $\xi$, then $H_{F(\xi,\cdot)}(\zeta,\cdot)$ depends holomorphically on $(\zeta,\xi)$. This allows us to compose any finite number of such deformation families by an inductive process. For the case of two families suppose that $H=H_F(\zeta,\cdot)$ and $G=G_F(\xi,\cdot)$ are deformation families with $H_F(0,\cdot)=G_F(0,\cdot)=F$ and such that $H_F(\zeta,\cdot)-F$ and $G_F(\xi,\cdot)-F$ have a zero of multiplicity $k\in\n$ at all points of $\Lambda$ for all $\zeta$ and $\xi$ respectively. Then, we define the composed deformation family by
\[
     (H\#G)_F(\zeta,\xi,x):=G_{H_F(\zeta,\cdot)}(\xi,x),\quad x\in M.
\]
Obviously, we have that $(H\#G)_F(0,\xi,\cdot)=G_F(\xi,\cdot)$ and $(H\#G)_F(\zeta,0,\cdot)=H_F(\zeta,\cdot)$, and $H\#G-F$ has a zero of multiplicity $k$ at $p$ for all $p\in\Lambda$. The operation $\#$ is associative but not commutative. 

To finish the proof of Theorem \ref{th:gp-S}, Lemma \ref{lem:Hpq} gives a finite open covering $\{U_i\}_{i=1}^m$ of the compact set $M\times M\setminus U$ and deformation families $H^i=H^i(\zeta^i,\cdot)\colon M\to\c^n$, with $H^i(0,\cdot)=F$, where $\zeta^i=(\zeta^i_1,\ldots,\zeta^i_{\eta_i})\in\Omega_i\subset\c^{\eta_i}$ for positive integers $\eta_i\in\n$ and $i=1,\ldots,m$. It follows that the difference map $\delta H^i(\zeta^i,p,q)$ is submersive at $\zeta^i=0$ for all $p,q\in U_i$. By taking $\zeta=(\zeta^1,\ldots,\zeta^m)\in\c^N$, with $N=\sum_{i=1}^m\eta_i$, and setting
\[
     H(\zeta,x):=(H^1\#H^2\#\cdots\#H^m)(\zeta^1,\ldots,\zeta^m,x)
\]
we obtain a deformation family such that $H(0,\cdot)=F$, $H(q,\cdot)-F$ has a zero of multiplicity $k$ at $p$ for all $p\in\Lambda$, and $\delta H$ is submersive everywhere on $M\times M\setminus U$ for all $\zeta\in\c ^N$ sufficiently close to the origin. This concludes the proof.
\end{proof}

%
%


\subsection{A completeness lemma}\label{ss:complete}

In this subsection we develop an intrinsic-extrinsic version of the arguments by Jorge and Xavier from \cite{JorgeXavier1980AM} in order to prove the following

\begin{lemma}\label{lem:complete-S}
Let $\Sgot\subset\c^n$ $(n\ge 3)$ be as in Lemma \ref{lem:MtFix}. Let $M$ be a compact bordered Riemann surface and $K\subset \mathring M$ be a smoothly bounded compact domain which is Runge and a strong deformation retract of $M$. Also let $\Lambda\subset \mathring K$ be a finite subset and $p_0\in\mathring K\setminus\Lambda$ be a point.
Then, given an integer $k\in\n$ and a positive number $\tau>0$, every $\Sgot$-immersion $F\colon K\to\c^n$ of class $\Ascr^1(K)$ may be approximated in the $\Cscr^1(K)$-topology by $\Sgot$-immersions $\wt F\colon M\to\c^n$ of class $\Ascr(M)$ satisfying the following conditions:
\begin{enumerate}[\rm (I)]
\item $\wt F-F$ has a zero of multiplicity $k$ at all points $p\in\Lambda$.
\item $\dist_{\wt F}(p_0,bM)>\tau$.
\end{enumerate}
\end{lemma}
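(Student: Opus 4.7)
The plan is to implement an intrinsic-extrinsic version of the Jorge–Xavier labyrinth technique, using Proposition \ref{pro:Mergelyan-S} as the principal approximation tool. First I would fix a nowhere-vanishing holomorphic $1$-form $\theta$ on an open Riemann surface containing $M$, and set $f = dF/\theta\colon K\to\Sgot_*$. By Proposition \ref{pro:nonflat} applied on a slightly enlarged Runge compact domain $K_0\subset\mathring M$ containing $K$, I may first replace $F$ by a non-flat $\Sgot$-immersion defined on $K_0$ which still has a contact of order $k$ with the original $F$ at every point of $\Lambda$. From now on I will keep the jet interpolation data fixed, since $\Lambda\subset\mathring K\subset\mathring K_0$, and all further modifications will happen in the annular region $M\setminus K_0$.

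The labyrinth is built as follows. Fix a large integer $N$ (to be specified in terms of $\tau$ and of a priori $\Cscr^1$-bounds on $F$) and choose smoothly bounded compact domains $K_0\Subset K_1\Subset\cdots\Subset K_N\subset\mathring M$ with $K_N$ a strong deformation retract of $M$. In each annular region $R_i:=K_i\setminus\mathring K_{i-1}$ I place a finite family $\Gamma_i\subset\mathring R_i$ of pairwise disjoint smooth Jordan arcs meeting $bR_i$ transversely, so that $S := K_N\cup\bigl(\bigcup_{i=1}^{N}\Gamma_i\bigr)$ is a very simple admissible Runge subset of $M$ in the sense of Definitions \ref{def:admissible} and \ref{def:simple}, with kernel component $K_N$. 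The arcs are arranged with the classical Jorge–Xavier property: there exist constants $\eta_i>0$ such that every smooth path in $R_i$ connecting the two boundary components of $R_i$ either crosses at least $N$ arcs of $\Gamma_i$ or contains a subarc of length at least $\eta_i$ (measured in a fixed background Riemannian metric on $M$) lying in $R_i\setminus\Gamma_i$.

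The next step is to extend $F$ to a generalized $\Sgot$-immersion $\wt F_0\colon K_N\cup\Gamma\to\c^n$ (in the sense of Definition \ref{def:GSI}), where $\Gamma:=\bigcup_i\Gamma_i$, with $\wt F_0=F$ on $K_N$ and with the $\Sgot_*$-valued derivative $\wt f_0:=d\wt F_0/\theta$ of very large modulus on $\Gamma$. Since $\Sgot$ is conical and $\Sgot_*$ is connected, on each arc of $\Gamma$ we may connect the prescribed endpoint values of $f$ by a continuous curve in $\Sgot_*$ whose modulus can be taken as large as we want; integrating against $\theta$ guarantees that the Euclidean length of $\wt F_0$ along every such arc exceeds a preassigned value $R>0$. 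Applying Proposition \ref{pro:Mergelyan-S} to $\wt F_0$ with the admissible set $S$, the interpolation set $\Lambda$, and the order $k$, I obtain an $\Sgot$-immersion $\wt F\colon M\to\c^n$ of class $\Ascr^1(M)$ approximating $\wt F_0$ in the $\Cscr^1(S)$-topology. Condition \rm{(I)} is then immediate from the Mergelyan step.

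To verify \rm{(II)}, I would take an arbitrary path $\gamma$ in $M$ from $p_0$ to $bM$; it must traverse each $R_i$. The Jorge–Xavier property together with the $\Cscr^1(S)$-closeness of $\wt F$ to $\wt F_0$ implies that inside each $R_i$ the path $\gamma$ contributes at least $\min\{N\cdot R\cdot (1-\epsilon),\, \eta_i\cdot c\}$ to the $\wt F$-length, where $c>0$ is a uniform lower bound for $|\wt F'|$ obtained from Cauchy estimates and the $\Cscr^1$-control of $\wt F$ on a fixed neighborhood of $S$. Choosing first the extrinsic prescription $R$ sufficiently large, then $N$ and $\eta_i$ correspondingly, the total length $\length(\wt F\circ\gamma)$ exceeds $\tau$. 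The main obstacle is precisely this balancing act: the extrinsic length gained along the labyrinth arcs, which we control via the $\Cscr^1$-approximation of $\wt F_0$, must be calibrated against the a priori bounds on $|\wt F'|$ in the "free" regions $R_i\setminus\Gamma_i$, bounds that themselves depend on the prescribed sizes on $\Gamma$ and on the Mergelyan approximation error. The classical inductive scheme — fixing the labyrinth geometry before prescribing the sizes on each layer and then approximating layer by layer, rather than all at once — resolves this circularity and yields $\dist_{\wt F}(p_0,bM)>\tau$, completing the proof.
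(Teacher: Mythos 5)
Your plan captures the overall Jorge--Xavier flavor, but it has a genuine gap at the crucial point: the claimed uniform lower bound $c>0$ for $|\wt F'|$ on the free regions does not exist. Cauchy estimates and $\Cscr^1$-control of $\wt F$ on a neighborhood of $S$ only give \emph{upper} bounds for the derivative; they say nothing about how small $|\wt f|$ may become on $M\setminus S$. A holomorphic map into $\Sgot_*$ that is $\Cscr^1$-close to a prescribed map on the labyrinth set can perfectly well have very small derivative norm in the complementary ``corridors'', so a path weaving between the labyrinth pieces need not accumulate any $\wt F$-length there. This is exactly the circularity you flag at the end, and invoking an unspecified ``classical inductive scheme'' does not resolve it: with the tools you cite (Proposition~\ref{pro:Mergelyan-S} alone) there is no mechanism that forces $|\wt f|$ away from $0$ off $S$.

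The paper avoids this by keeping one coordinate rigid. After first invoking Proposition~\ref{pro:Mergelyan-S} to make $F$ holomorphic on $M$ with $f_1=dF_1/\theta$ not identically zero, the labyrinth (a finite union of disjoint compact \emph{disks} $L_1,\dots,L_m$ in $\mathring M\setminus K$, not arcs) is calibrated against the fixed weight $|f_1\theta|$: any arc from $p_0$ to $bM$ that avoids the disks has $\int_\alpha|f_1\theta|>\tau$. One then prescribes a map $h$ of class $\Ascr(S)$ on the admissible set $S=K\cup L\cup\Gamma$ with $h_1=f_1|_S$ and with $|\int_\alpha h\theta|>\tau$ for arcs in $S$ ending in $L$, and crucially applies \emph{Lemma~\ref{lem:MtFix}} (the version that preserves the first component), rather than Proposition~\ref{pro:Mergelyan-S}, to get $\wt f$ with $\wt f_1=f_1$ \emph{everywhere} on $M$. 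This yields the intrinsic bound $\int_\alpha|\wt f\theta|\ge\int_\alpha|\wt f_1\theta|=\int_\alpha|f_1\theta|>\tau$ for paths avoiding $L$, and the extrinsic bound $\length(\wt F(\alpha))\ge|\wt F(q)-\wt F(p_0)|>\tau$ for paths entering $L$ at some point $q$; no inductive layer-by-layer calibration and no lower bound on $|\wt F'|$ in the free regions is needed. Your proposal is missing both the use of Lemma~\ref{lem:MtFix} (with the fixed-coordinate hypothesis on $\Sgot$, which is why the lemma assumes $\Sgot\cap\{z_1=1\}$ Oka and the existence of a local section of $\pi_1$) and the observation that fixing $f_1$ is what makes the intrinsic half of the estimate go through.
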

\begin{proof}
Without loss of generality we assume that $M$ is a smoothly bounded compact domain in an open Riemann surface $\wt M$. By Proposition \ref{pro:Mergelyan-S} we may assume that $F$ is holomorphic on $M$ and that $f_1$ does not vanish everywhere on $M$. Fix a holomorphic $1$-form $\theta$ vanishing nowhere on $\wt M$ and set $dF=f\theta$ where $f=(f_1,\ldots,f_n)\colon M\to\Sgot_*$ is a holomorphic map. 

Since $K$ is a strong deformation retract of $M$ then $\mathring M\setminus K$ consists of a finite family of pairwise disjoint open annuli. Thus, there exists a finite family of pairwise disjoint, smoothly bounded, compact disks $L_1,\ldots, L_m$ in $\mathring M\setminus K$ satisfying the following property:  if $\alpha\subset M\setminus\bigcup_{j=1}^m L_j$ is a arc connecting $p_0$ with $bM$ then
\begin{equation}\label{eq:lengthpath}
     \int_{\alpha}|f_1\theta| > \tau.
\end{equation}
Recall that $f_1\neq 0$. (Such disks can be found as pieces of labyrinths of Jorge-Xavier type (see \cite{JorgeXavier1980AM}) on the annuli forming $\mathring M\setminus K$; see \cite[Proof of Lemma 4.1]{AlarconFernandezLopez2013CVPDE} for a detailed explanation). 
Set $L:=\bigcup_{j=1}^m L_j$.

For each $j=1,\ldots,m$, choose a Jordan arc $\gamma_j\subset\mathring M$ with an endpoint in $K$, the other endpoint in $L_j$, and otherwise disjoint from $K\cup L$, such that $\gamma_i\cap\gamma_j=\emptyset$ for all $i\neq j\in\{1,\ldots,m\}$ and the set
\[
	S:=K\cup L\cup\Gamma,
\]
where $\Gamma:=\bigcup_{j=1}^m\gamma_j$, is an admissible subset of $M$. It follows that $S$ is a connected very simple admissible subset of $M$ with kernel component $K$ (see Definition \ref{def:simple}), and such that $K$ is a deformation retract of $S$ (hence of $M$). 
Take a map $h=(h_1,\ldots,h_n)\colon S\to\Sgot_*$ of class $\Ascr(S)$ satisfying the following conditions:
\begin{enumerate}[\rm (a)]
\item $h_1=f_1|_{S}$.
\item $h|_K=f|_K$.
\item $|\int_\alpha h\theta| >\tau$ for all arcs $\alpha\subset S$ with initial point $p_0$ and final point in $ L$.
\end{enumerate}
Existence of such a map is clear; we may for instance choose $h$ close to $0\in\c^n$ on each component of $ L$ and such that $|\int_{\gamma_j} h\theta|$ is very large for every component $\gamma_j$ of $\Gamma$. Also choose for each $p\in\Lambda$ a smooth Jordan arc $C_p\subset \mathring K$ with initial point $p_0$ and final one $p$, and assume that $C_p\cap C_q=\{p_0\}$ for all $p\neq q\in\Lambda$.  Then, Lemma \ref{lem:MtFix} provides a holomorphic map $\wt f=(\wt f_1,\wt f_2,\ldots,\wt f_n)\colon M\to\Sgot_*$ such that
\begin{enumerate}[\rm i)]
\item $\wt f$ approximates $h$ on $S$,
\item $\wt f_1=f_1$ everywhere on $M$,
\item $\wt f-h$ has a zero of multiplicity $k-1$ at $p$ for all $p\in\Lambda$,
\item $\int_{C_p}(\wt f-h)\theta=0$ for all $p\in\Lambda$, and
\item $\int_\gamma(\wt f-h)\theta =0$ for all closed curves $\gamma\subset S$.
\end{enumerate}

Since $f\theta=dF$ is exact, properties {\rm (b)} and  {\rm v)} and the fact that $K$ is a strong deformation retract of $M$ guarantee that $\wt f\theta$ is exact on $M$ as well.  Therefore, the map $\wt F=(\wt F_1,\wt F_2,\ldots,\wt F_n)\colon M\to\c^n$ defined by
\[
     \wt F(p):=F(p_0)+\int_{p_0}^p\wt f\theta,\quad p\in M,
\]
is well defined and an $\Sgot$-immersion of class $\Ascr^1(M)$. We claim that if the approximation in {\rm i)} is close enough then $\wt F$ satisfies the conclusion of the lemma. Indeed, properties {\rm i)} and {\rm (b)} guarantee that $\wt F$ approximates $F$ as close as desired in the $\Cscr^1(K)$-topology. On the other hand, {\rm iii)}, {\rm iv)}, and {\rm (b)} ensure that $\wt F - F$ has a zero of multiplicity $k$ at all points of $\Lambda$, which proves {\rm (I)}.
Finally, in order to check condition {\rm (II)}, let $\alpha\subset M$ be an arc with initial point $p_0$ and final one in $bM$. Assume first that $\alpha\cap L\neq\emptyset$ and let $\wt\alpha\subset\alpha$ be a subarc with initial point $p_0$ and final point $q$ for some $q\in L$. Then we have
\[
	\length(\wt F(\alpha))  >  \length(\wt F(\wt \alpha)) 
	\ge |\wt F(q)-\wt F(p_0)|
	=  \big| \int_{p_0}^q \wt f \theta \big|
	\stackrel{\text{\rm i)}}{\approx} 
	\big| \int_{p_0}^q h \theta \big| \stackrel{\text{\rm (c)}}{>} \tau.
\]
Assume that, on the contrary, $\alpha\cap L=\emptyset$. In this case,
\[
	\length(\wt F(\alpha))  = \int_\alpha |\wt f \theta| \ge \int_\alpha |\wt f_1\theta|
	\stackrel{\text{\rm ii)}}{=} \int_\alpha |f_1\theta| \stackrel{\eqref{eq:lengthpath}}{>}\tau.
\]
This proves {\rm (II)} and completes the proof.
\end{proof}


\subsection{A properness lemma}\label{ss:proper}

Recall that given a vector $x=(x_1,\ldots,x_n)$ in $\r^n$ or $\c^n$ we denote $|x|_\infty=\max\{|x_1|,\ldots,|x_n|\}$; see Section \ref{sec:prelim} for notation.

\begin{lemma}\label{lem:proper-S}
Let $n\ge 3$ be an integer and $\Sgot$ be an irreducible closed conical complex subvariety of $\c^n$ which is not contained in any hyperplane. Assume that $\Sgot_*=\Sgot\setminus\{0\}$ is smooth and an Oka manifold, and that $\Sgot\cap\{z_j=1\}$ is an Oka manifold and the coordinate projection $\pi_j\colon \Sgot\to\c$ onto the $z_j$-axis admits a local holomorphic section $h_j$ near $z_j=0$ with $h_j(0)\neq 0$ for all $j=1,\ldots,n$. Let $M$ be a compact bordered Riemann surface and $K\subset \mathring M$ be a smoothly bounded compact domain which is Runge and a strong deformation retract of $M$. Also let $\Lambda\subset \mathring K$ be a finite subset,
 $F\colon K\to\c^n$ be an $\Sgot$-immersion of class $\Ascr^1(K)$, let $\tau>\rho>0$ be numbers, and assume that
\begin{equation}\label{eq:maxFK}
|F(p)|_\infty>\rho\quad \text{for all $p\in bK$}.
\end{equation} 
Then, given an integer $k\in\n$, $F$ may be approximated in the $\Cscr^1(K)$-topology by $\Sgot$-immersions $\wt F\colon M\to\c^n$ of class $\Ascr^1(M)$ satisfying the following conditions:
\begin{enumerate}[\rm (I)]
\item $\wt F-F$ has a zero of multiplicity $k$ at all points $p\in\Lambda$.
\item $|\wt F(p)|_\infty>\rho$ for all $p\in M\setminus\mathring K$.
\item $|\wt F(p)|_\infty>\tau$ for all $p\in bM$.
\end{enumerate}
\end{lemma}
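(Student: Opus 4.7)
The plan is to build $\wt F$ by pushing $F$ outward from $K$ to $M$ one narrow strip at a time, at each step applying Lemma \ref{lem:MtFix} (after a permutation of coordinates) to extend the immersion while keeping one coordinate function prescribed so that $|\wt F|_\infty$ remains under control. First I would invoke \eqref{eq:maxFK} and continuity of $F$: for each point $p\in bK$ some index $j(p)\in\{1,\ldots,n\}$ satisfies $|F_{j(p)}(p)|>\rho$, and by compactness there is a finite closed-arc cover $\beta_1,\ldots,\beta_m$ of $bK$, with assigned indices $j_1,\ldots,j_m\in\{1,\ldots,n\}$ and open neighborhoods $W_i\subset K$ of $\beta_i$ on which $|F_{j_i}|>\rho+2\delta$ for some small $\delta>0$. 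Because $K$ is a strong deformation retract of $M$, the complement $M\setminus\mathring K$ is a finite disjoint union of closed annuli; I would slice these transversally into narrow rectangular strips $R_1,\ldots,R_N$, each meeting $bK$ inside some $\beta_{i(l)}$ and having one edge on $bM$. Smoothing corners yields smoothly bounded compact domains $K_0:=K\subset K_1\subset\cdots\subset K_N=M$ with $K_l:=K_{l-1}\cup R_l$ and each $K_{l-1}$ Runge in $K_l$ and a strong deformation retract of $K_l$.

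Next I would inductively construct $\Sgot$-immersions $F_l\colon K_l\to\c^n$ of class $\Ascr^1(K_l)$ with $F_0=F$, each $F_l$ as close as desired to $F_{l-1}$ in the $\Cscr^1(K_{l-1})$-topology, $F_l-F_{l-1}$ having a zero of multiplicity $k$ at every $p\in\Lambda$, and satisfying $|F_l^{\,j_{i(l)}}|>\rho+\delta$ on $\overline{R_l}$ together with $|F_l^{\,j_{i(l)}}|>\tau+1$ on $R_l\cap bM$. For the inductive step with $j:=j_{i(l)}$, Theorem \ref{th:MTJI} applied to scalar holomorphic functions on the admissible set $K_{l-1}\cup R_l$ would give a holomorphic function $G\in\Ascr(K_l)$ approximating $F_{l-1}^{\,j}$ on $K_{l-1}$ with jet-matching at $\Lambda$, and satisfying the prescribed lower bounds on $\overline{R_l}$ and $R_l\cap bM$ (this is possible because the neighborhood of $bK_{l-1}\cap\overline{R_l}\subset\beta_{i(l)}$ already has $|F_{l-1}^{\,j}|>\rho+2\delta$, leaving headroom to interpolate upward). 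Extending $F_{l-1}$ continuously over $R_l$ into a generalized $\Sgot$-immersion whose $j$-th component equals $G$ is achieved using the conical structure of $\Sgot$: where $G\neq 0$, one can parametrize preimages in $\Sgot_*\cap\pi_j^{-1}(G(p))$ by rescaling a fixed element of $\Sgot\cap\{z_j=1\}$, and the local section $h_j$ handles the possibility of $G$ vanishing. Finally, Lemma \ref{lem:MtFix} (with coordinate indices cyclically permuted so that $j$ plays the role of the preserved first coordinate) would approximate this generalized immersion on $K_{l-1}\cup R_l$ by a holomorphic $\Sgot$-immersion $F_l\colon K_l\to\c^n$ with $F_l^{\,j}\equiv G$, matching $F_{l-1}$ to order $k$ at $\Lambda$, and approximating $F_{l-1}$ in $\Cscr^1(K_{l-1})$.

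The hard part will be ensuring compatibility across the already-handled strips, since step $l$ perturbs the coordinates $F_l^{\,j_{i(r)}}$ on prior strips $R_r$ ($r<l$) on which the size bound was established via a different index $j_{i(r)}\neq j_{i(l)}$. I would handle this by choosing the $\Cscr^1(K_{l-1})$-approximation tolerance at step $l$ much smaller than the previously established safety margins $|F_{l-1}^{\,j_{i(r)}}|-\rho$ on each $R_r$ and $|F_{l-1}^{\,j_{i(r)}}|-\tau$ on each $R_r\cap bM$: since the induction has only $N$ steps, a geometric sequence of tolerances (say halving the margin at each step) preserves strictly positive safety margins throughout. After the $N$-th step, $\wt F:=F_N$ is an $\Sgot$-immersion of class $\Ascr^1(M)$ approximating $F$ on $K$; property (I) follows from iterated jet-matching, while every $p\in M\setminus\mathring K$ belongs to some strip $R_l$ where $|\wt F(p)|_\infty\geq |\wt F^{\,j_{i(l)}}(p)|>\rho$, giving (II), and similarly $|\wt F(p)|_\infty>\tau$ on $bM$, giving (III).
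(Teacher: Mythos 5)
Your overall architecture (cover $bK$ by arcs on which a single coordinate of $F$ exceeds $\rho$, cut the annuli $M\setminus\mathring K$ into cells, advance cell by cell with geometrically shrinking tolerances so earlier bounds survive) matches the spirit of the paper's proof. But the inductive step contains a genuine gap: Theorem \ref{th:MTJI} cannot produce a holomorphic function $G$ on $K_l$ that simultaneously approximates $F_{l-1}^{\,j}$ on $K_{l-1}$ and satisfies $|G|>\rho+\delta$ on all of the two\-/dimensional strip $\overline{R_l}$. The set $K_{l-1}\cup R_l$ is not an admissible set carrying independent data on the two pieces: $K_{l-1}$ and $R_l$ share a boundary arc, so their union is the single connected domain $K_l$, and by the identity principle $G|_{R_l}$ is essentially determined by $G|_{K_{l-1}}$ — there is no ``headroom to interpolate upward'' in modulus over an open region glued to the approximation region. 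Mergelyan/Runge-type theorems only control the map on compact domains at positive mutual distance together with connecting arcs; on the transition zone between the collar of $bK_{l-1}$ (where $|F_{l-1}^{\,j}|>\rho+2\delta$ by continuity) and the part of $R_l$ near $bM$, a perturbation $h$ that is small on $K_{l-1}$ must pass through intermediate sizes, and nothing prevents $|F_{l-1}^{\,j}+h|$ from dipping below $\rho$ there. The subsequent step is affected by the same issue: a ``generalized $\Sgot$-immersion'' on a cell with nonempty interior is already a genuine holomorphic $\Sgot$-immersion there, so ``extending $F_{l-1}$ continuously over $R_l$'' with prescribed $j$-th component is the very problem you are trying to solve, and Lemma \ref{lem:MtFix} takes as input data on an admissible set $K\cup\Gamma$, not on the full domain $K_l$.

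This is exactly the difficulty the paper's proof is engineered around, and it uses \emph{two} mechanisms rather than one. Each cell $D_j$ of the annulus is split into a thin collar $\Upsilon_j$ of the three controlled boundary arcs $\gamma_{j-1}\cup\alpha_j\cup\gamma_j$ — where the distinguished coordinate is already $>\rho$ by continuity and is then \emph{kept fixed} by Lemma \ref{lem:MtFix}, so the bound costs nothing — and an inner disk $\Omega_j$ touching $bM$, which is a \emph{separate component} of the admissible set $S_b$ joined to the rest only by an arc $\lambda_j$; along $\lambda_j$ the primitive is translated far from the origin, so the full $|\cdot|_\infty$ (not a single coordinate) exceeds $\tau$ on $\Omega_j$, and this survives the uniform approximation because $\Omega_j$ lies in the compact-domain part of $S_b$. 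To repair your argument you would need to replace the single lower bound on one coordinate over the whole strip by such a two-region decomposition of each cell, preceded by an extension of $F$ to an $\Sgot$-immersion controlled on a skeleton of transversal arcs (the paper does this first with Theorem \ref{th:MtT}).
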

\begin{proof}
Without loss of generality we assume that $M$ is a smoothly bounded compact domain in an open Riemann surface $\wt M$. By Proposition \ref{pro:Mergelyan-S} we may assume that $F=(F_1,\ldots,F_n)$ is holomorphic on $\wt M$. Since $K$ is a strong deformation retract of $M$, we have that $M\setminus \mathring K$ consists of finitely many pairwise disjoint compact annuli. For simplicity of exposition we assume that $\Acal:=M\setminus\mathring K$ is connected (and hence a single annulus); the same proof applies in general by working separately on each connected component of $M\setminus \mathring K$. We denote by $\alpha$ the boundary component of $\Acal$ contained in $bK$ and by $\beta$ the one contained in $bM$; both $\alpha$ and $\beta$ are smooth Jordan curves.

From inequality \eqref{eq:maxFK} there exist an integer $l\ge 3$, subsets $I_1,\ldots,I_n$ of $\z_l$ (where $\z_l=\{0,1,\ldots,l-1\}$ denotes the additive cyclic group of integers modulus $l$), and a family of compact connected subarcs $\{\alpha_j: j\in \z_l \}$ of $bK$, satisfying the following properties:
\begin{enumerate}[\rm ({a}1)]
\item $\bigcup_{j\in\z_l} \alpha_j=\alpha$.
\item $\alpha_j$ and $\alpha_{j+1}$ have a common endpoint $p_j$ and are otherwise disjoint.
\item $\bigcup_{a=1}^n I_a=\z_l$ and $I_a\cap I_b=\emptyset$ for all $a\neq b\in\{1,\ldots,n\}$.
\item If $j\in I_a$ then $|F_a(p)|>\rho$ for all $p\in\alpha_j$, $a=1,\ldots,n$.
\end{enumerate}
(Possibly $I_a=\emptyset$ for some $a\in\{1,\ldots,n\}$.)

Consider for each $j\in \z_l$ a smooth embedded arc $\gamma_j\subset \Acal$ with the following properties:
\begin{itemize}
\item $\gamma_j$ joins $\alpha\subset bK$ with $\beta\subset bM$ and intersects them transversely.
\item $\gamma_j\cap\alpha=\{p_j\}$.
\item $\gamma_j\cap\beta$ consists of a single point, namely, $q_j$.
\item The arcs $\gamma_j$, $j\in\z_l$, are pairwise disjoint.
\end{itemize}
Consider the admissible set 
\[
	S:=K\cup\big(\bigcup_{j\in \z_l}\gamma_j\big)\subset M
\]
and fix a point $ x_0 \in\mathring K\setminus\Lambda$.
Let $z=(z_1,\ldots,z_n)$ be the coordinates on $\c^n$ and recall that $\pi_a\colon \c^n \to\c$ is the $a$-th coordinate projection $\pi_a(z)=z_a$ for all $a=1,\ldots,n$. Let $\theta$ be a holomorphic $1$-form vanishing nowhere on $\wt M$, and let $f\colon S\to\Sgot_*$ be a map of class $\Ascr(S)$ such that $f=dF/\theta$ on $K$ and the map $\wt G\colon S\to\c^n$ given by
\[
	\wt G(p)=F(p_0)+\int_{ x_0 }^p f\theta,
\]
which is well defined since $K$ is a deformation retract of $S$, satisfies the following conditions:
\begin{enumerate}[\rm ({b}1)]
\item $\wt G=F$ on $K$ and on a neighborhood of $p_j$ for all $j\in\z_l$.
\item If $j\in I_a$ then $|\pi_a(\wt G(z))|>\rho$ for all $z\in\gamma_{j-1}\cup\gamma_j$, $a=1,\ldots,n$.
\item If $j\in I_a$ then $|\pi_a(\wt G(q_{j-1}))|>\tau$ and $|\pi_a(\wt G(q_j))|>\tau$, $a=1,\ldots,n$.
\end{enumerate} 

Existence of such an $f$ is guaranteed by {\rm (a4)}.
Theorem \ref{th:MtT} provides a map $g\colon M\to\Sgot_*$ of class $\Ascr(M)$ such that $g\theta$ is exact on $M$, and the $\Sgot$-immersion $G=(G_1,\ldots,G_n)\colon M\to\c^n$ of class $\Ascr^1(M)$ given by $G(p)=F( x_0 )+\int_{ x_0 }^p g\theta$ enjoys the following properties:
\begin{enumerate}[\rm ({c}1)]
\item $G$ approximates $\wt G$ in the $\Cscr^1(K)$-topology.
\item $G-\wt G$ has a zero of multiplicity $k$ at all point of $\Lambda$.
\item If $j\in I_a$ then $|G_a(p)|>\rho$ for all $p\in\gamma_{j-1}\cup\alpha_j\cup\gamma_j$, $a=1,\ldots,n$.
\item If $j\in I_a$ then $|G_a(p)|>\tau$ for $p\in\{q_{j-1},q_j \}$, $a=1,\ldots,n$.
\end{enumerate}
Property {\rm (c3)} follows from {\rm (a4)} and {\rm (b2)} whereas {\rm (c4)} follows from {\rm (b3)}, provided that the approximation of $f$ by $g$ is close enough.

For each $j\in \z_l$ let $\beta_j\subset\beta$ denote the subarc of $\beta$ with endpoints $q_{j-1}$ and $q_j$ which does not contain $q_i$ for any $i\in \z_l\setminus\{j-1,j\}$. It is clear that 
\begin{equation}\label{eq:betaj}
\bigcup_{j\in\z_l}\beta_j=\beta.
\end{equation}
%
Also denote by $D_j\subset\Acal$ the closed disk bounded by the arcs $\gamma_{j-1}$, $\alpha_j$, $\gamma_j$, and $\beta_j$; see Figure \ref{Fig:setproper}. It follows that 
\begin{equation}\label{eq:Dj}
\Acal=\bigcup_{j\in\z_l} D_j.
\end{equation}

Call $H_0:=G=(H_{0,1},\ldots,H_{0,n})$ and $I_0:=\emptyset$. We shall construct a sequence of $\Sgot$-immersions $H_b=(H_{b,1},\ldots,H_{b,n})\colon M\to\c^n$, $b=1,\ldots,n$, of class $\Ascr^1(M)$ satisfying the following requirements for all $b\in \{1,\ldots,n\}$:
\begin{enumerate}[\rm ({d}1$_b$)]
\item $H_b$ approximates $H_{b-1}$ in the $\Cscr^1$-topology on $M\setminus (\bigcup_{j\in I_b}\mathring D_j)$.
\item $H_b-H_{b-1}$ has a zero of multiplicity $k$ at all points of $\Lambda$.
\item If $j\in \bigcup_{i=1}^b I_i$ then $|H_b(p)|_\infty>\rho$ for all $p\in D_j$.
\item If $j\in \bigcup_{i=1}^b I_i$ then $|H_b(p)|_\infty>\tau$ for all $p\in \beta_j$.
\item If $j\in I_a$ then $|H_{b,a}(p)|>\rho$ for all $p\in\gamma_{j-1}\cup\alpha_j\cup\gamma_j$, $a=1,\ldots,n$.
\item If $j\in I_a$ then $|H_{b,a}(p)|>\tau$ for $p\in\{q_{j-1},q_j \}$, $a=1,\ldots,n$.
\end{enumerate}

We claim that the $\Sgot$-immersion $\wt F:=H_n\colon M\to\c^n$ satisfies the conclusion of the lemma. Indeed, $\wt F$ approximates $F$ in the $\Cscr^1(K)$-topology by properties {\rm (b1)}, {\rm (c1)} and {\rm (d1$_{1}$)}--{\rm (d1$_n$)}; condition {\rm (I)} is guaranteed by {\rm (d2)}, {\rm (c2)}, and {\rm (b1)}; condition {\rm (II)} by {\rm (d3$_n$)}, {\rm (a3)}, and \eqref{eq:Dj}; and condition {\rm (III)} by {\rm (d4$_n$)}, {\rm (a3)}, and \eqref{eq:betaj}. So, to conclude the proof it suffices to construct the sequence $H_1,\ldots, H_n$ satisfying the above properties. We proceed by induction. Assume that we already have $H_0,\ldots,H_{b-1}$ for some $b\in\{1,\ldots,n\}$ with the desired properties and let us construct $H_b$. Notice that  {\rm (d5$_0$)}$=${\rm (c3)} and {\rm (d6$_0$)}$=${\rm (c4)} formally hold.
By continuity of $H_{b-1}$ and conditions {\rm (d5$_{b-1}$)} and {\rm (d6$_{b-1}$)}, for each $j\in I_b$ there exists a closed disk $\Omega_j\subset D_j\setminus (\gamma_{j-1}\cup\alpha_j\cup\gamma_j)$ such that the following hold.
\begin{enumerate}[\rm (i)]
\item $\Omega_j\cap \beta_j$ is a compact connected Jordan arc.
\item $|H_{b-1,b}(p)|>\rho$ for all $p\in\Upsilon_j:=\overline{D_j\setminus \Omega_j}$.
\item $|H_{b-1,b}(p)|>\tau$ for all $p\in \overline{\beta_j\setminus \Omega_j}$.
\end{enumerate}
Next, for each $j\in I_b$ choose a smooth embedded arc $\lambda_j\subset \Upsilon_j\setminus(\gamma_{j-1}\cup\gamma_j)$ with an endpoint in $\alpha_j$ and the other one in $\Omega_j$ and otherwise disjoint from $b\Upsilon_j$ (see Figure \ref{Fig:setproper}). Moreover, choose each $\lambda_j$ so that the set
\[
	S_b:=\big(M\setminus \bigcup_{j\in I_b}\mathring \Upsilon_j\big)\cup \big(\bigcup_{j\in I_b} \lambda_j\big)
\]
is admissible. Notice that $S_b$ is connected and very simple in the sense of Definition \ref{def:simple}.
\begin{figure}[ht]
	\includegraphics[width=11cm]{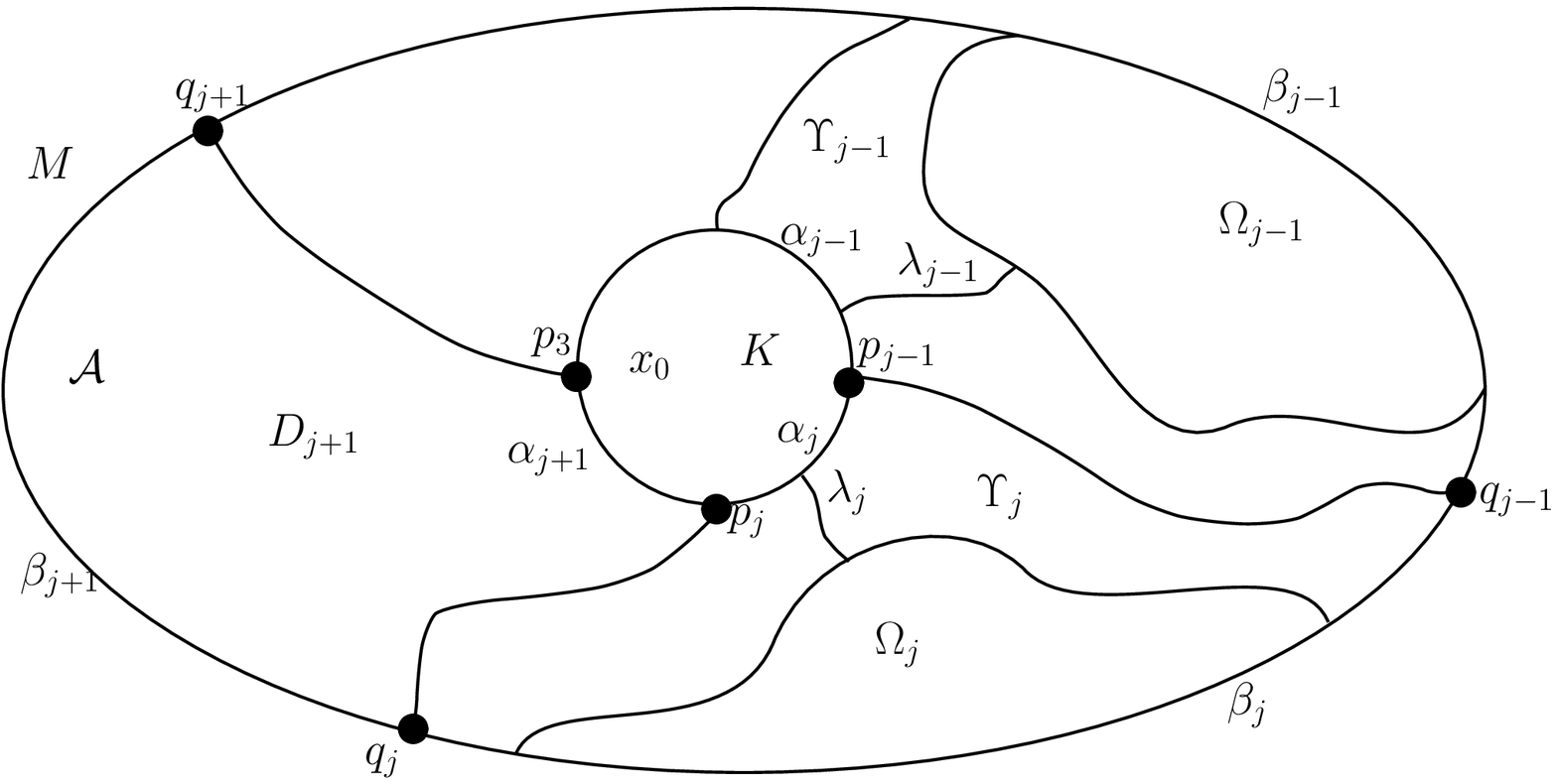}
	\caption{The annulus $\Acal$.}\label{Fig:setproper}
\end{figure}

Set $h=(h_1,\ldots,h_n)=dH_{b-1}/\theta$ and let $\wt h=(\wt h_1,\ldots,\wt h_n)\colon S_b\to\Sgot_*$ be a map of class $\Ascr(S_b)$ such that:
\begin{enumerate}[\rm (i)]
\item[\rm (iv)] $\wt h=h$ on $M\setminus (\bigcup_{j\in I_b}\mathring D_j)$.
\item[\rm (v)] $\wt h_b=h_b$ on $S_b$.
\item[\rm (vi)] The map $\wt H\colon S_b\to\c^n$ given by 
\[
	\wt H(p)=H_{b-1}( x_0 )+\int_{ x_0 }^p \wt h\theta,\quad p\in S_b,
\]
satisfies $|\wt H(p)|_\infty>\tau$ for all $p\in \bigcup_{j\in I_b}\Omega_j$. 
\end{enumerate}
To construct such a map $\wt h$ we may for instance choose $\wt h=h$ on $M\setminus \bigcup_{j\in I_b}\mathring \Upsilon_j$ and suitably define it on $\bigcup_{j\in I_b} \lambda_j$. Now, Lemma \ref{lem:MtFix} furnishes a map $\phi\colon M\to\Sgot_*$ of class $\Ascr(M)$ such $\phi\theta$ is exact on $M$ (take into account that $\wt h\theta=h\theta=dH_{b-1}$ on $K$ and that $K$ is a deformation retract of $M$) and the map $H_b\colon M\to\c^n$ given by
\[
	H_b(p)=H_{b-1}( x_0 )+\int_{ x_0 }^p \phi\theta,\quad p\in M,
\]
is an $\Sgot$-immersion of class $\Ascr^1(M)$ enjoying the following properties:
\begin{enumerate}[\rm (i)]
\item[\rm (vii)] $H_b$ is as close as desired to $\wt H$ in the $\Cscr^1$-topology on $M\setminus (\bigcup_{j\in I_b}\mathring D_j)$.
\item[\rm (viii)] $H_{b,b}=H_{b-1,b}$ (take into account {\rm (v)}).
\item[\rm (ix)] $H_b-\wt H$ has a zero of multiplicity $k$ at all points of $\Lambda$.
\end{enumerate}
Since $\wt H=H_{b-1}$ on $M\setminus (\bigcup_{j\in I_b}\mathring D_j)\supset\Lambda\cup(\bigcup_{j\in\z_l}\gamma_{j-1}\cup\alpha_j\cup\gamma_j)$, then {\rm (d1$_b$)}$=${\rm (vii)}, {\rm (d2$_b$)}$=${\rm (ix)}, and, if the approximation in {\rm (vii)} is close enough, {\rm (d5$_b$)} and {\rm (d6$_b$)} follow from {\rm (d5$_{b-1}$)} and {\rm (d6$_{b-1}$)}, respectively. 

Pick $j\in\bigcup_{i=1}^b I_i$ and $p\in D_j$. If $j\notin I_b$ then {\rm (d3$_{b-1}$)} and {\rm (vii)} ensure that $|H_b(p)|_\infty>\rho$. On the other hand, if $j\in I_b$ then {\rm (ii)} and {\rm (viii)} ensure that $|H_b(p)|\ge |H_{b,b}(p)|>\rho$ provided that $p\in\Upsilon_j$, whereas {\rm (vi)} and {\rm (vii)} guarantee that $ |H_b(p)|>\tau>\rho$ provided that $p\in \Omega_j$. This proves {\rm (d3$_b$)}.

Finally, choose $j\in\bigcup_{i=1}^b I_i$ and $p\in \beta_j$. As above, if $j\notin I_b$ then {\rm (d4$_{b-1}$)} and {\rm (vii)} give that $|H_b(p)|_\infty>\tau$. Likewise, if $j\in I_b$ then {\rm (iii)} and {\rm (viii)} ensure that $|H_b(p)|\ge |H_{b,b}(p)|>\tau$ provided that $p\in\beta_j\setminus\Omega_j$, whereas {\rm (vi)} and {\rm (vii)} imply that $ |H_b(p)|>\tau$ provided that $p\in \beta_j\cap\Omega_j$. This proves {\rm (d4$_b$)} and concludes the proof of the lemma.
\end{proof}


\section{Proof of Theorem \ref{th:main-intro3}}\label{sec:maintheorem}

As in the proof of Theorem \ref{th:MtT} we can assume that $\Lambda\cap bK=\emptyset$ and also that for each $p\in\Lambda$ we have that either $\Omega_p\subset \mathring K$ or $\Omega_p\cap K=\emptyset$.

Set $M_0:=K$ and let $\{M_j\}_{j\in\n}$ be an exhaustion of $M$ by smoothly bounded Runge compact domains in $M$ such that:
\begin{itemize}
\item $M_0\Subset M_1\Subset\cdots\Subset\bigcup_{j\in\n} M_j=M$.
\item $\chi(M_j\setminus\mathring M_{j-1})\in\{-1,0\}$ for all $j\in\n$.
\item $b M_j\cap \Lambda=\emptyset$ for all $j\in\n$ and so, up to shrinking the sets $\Omega_p$ if necessary, we may assume that $\Omega_p\subset\mathring M_j$ or $\Omega_p\cap M_j=\emptyset$ for all $p\in\Lambda$.
\end{itemize}
The existence of such sequence is guaranteed as in the proof of Theorem \ref{th:MtT}.
The set $\Lambda_j:=\Lambda\cap M_j=\Lambda\cap\mathring M_j$, $j\in\z_+$, is empty or finite; without loss of generality we may assume that $\Lambda_0\neq\emptyset$ and $\Lambda_j\setminus\Lambda_{j-1}\neq \emptyset$ for all $j\in\n$, and hence $\Lambda$ is infinite. Observe that $\Lambda_{j-1}\subsetneq\Lambda_j$ for all $j\in\n$. Fix a sequence $\{\epsilon_j\}_{j\in\n}\searrow 0$ which will be specified later, set 
\[
	F_0:=F|_{M_0}\colon M_0\to\c^n
\]
and, by Proposition \ref{pro:nonflat} and Theorem \ref{th:gp-S}, assume without loss of generality that $F_0$ is nonflat and, if $F|_\Lambda$ is injective, an embedding.


\subsection{Proof of the first part of the theorem}\label{ss:1th} 

For the first part of the theorem we shall construct a sequence $\{F_j\}_{j\in\n}$ of nonflat $\Sgot$-immersions $F_j\colon M_j\to\c^n$ of class $\Ascr^1(M_j)$ satisfying:
\begin{enumerate}[\rm (i$_j$)]
\item $\|F_j-F_{j-1}\|_{1,M_{j-1}}<\epsilon_j$.
\item $F_j-F$ has a zero of multiplicity $k\in\n$ at every point $p\in\Lambda_j$.
\item If $F|_{\Lambda}$ is injective then $F_j$ is an embedding.
\end{enumerate}

We proceed by induction. The basis is given by the $\Sgot$-immersion $F_0$, which clearly meets {\rm (ii$_0$)} and {\rm (iii$_0$)}; condition {\rm (i$_0$)} is vacuous. For the inductive step assume that we have an $\Sgot$-immersion $F_{j-1}\colon M_{j-1}\to\c^n$ of class $\Ascr^1(M_{j-1})$ meeting {\rm (i$_{j-1}$)}, {\rm (ii$_{j-1}$)}, and {\rm (iii$_{j-1}$)} for some $j\in\n$, and let us furnish $F_j\colon M_j\to\c^n$ enjoying the corresponding properties. We distinguish two different cases depending on the Euler characteristic of $M_j\setminus \mathring M_{j-1}$.

\smallskip

\noindent {\it Noncritical case: Assume that $\chi(M_j\setminus\mathring  M_{j-1})=0$}. It follows that $M_{j-1}$ is a strong deformation retract of $M_j$, and then Proposition \ref{pro:Mergelyan-S} applied to the data
\[
	S=M_{j-1}\cup \big(\bigcup_{p\in\Lambda_j\setminus\Lambda_{j-1}} \Omega_p\big),
	\quad S_0=M_{j-1},\quad \Lambda=\Lambda_j,\quad k,
\]
and the generalized $\Sgot$-immersion $S\to\c^n$ agreeing with 
$F_{j-1}$ on $M_{j-1}$ and with $F$ on $\bigcup_{p\in\Lambda_j\setminus\Lambda_{j-1}} \Omega_p$,
provides an $\Sgot$-immersion $F_j\colon M_j\to\c^n$ of class $\Ascr^1(M_j)$ that meets {\rm(i$_{j}$)} and {\rm (ii$_{j}$)}. Finally, if $F|_{\Lambda}$ is injective then Theorem \ref{th:gp-S} enables us to choose $F_j$ being an embedding; this ensures {\rm(iii$_{j}$)}.

\smallskip

\noindent {\it Critical case: Assume that $\chi(M_j\setminus\mathring  M_{j-1})=-1$}. We then have that the change of topology is described by attaching to $M_{j-1}$ a smooth arc $\alpha$ in $\mathring M_j\setminus \mathring M_{j-1}$ meeting $M_{j-1}$ only at their endpoints. Thus, $M_{j-1}\cup\alpha$ is a strong deformation retract of $M_j$. Further, we may choose $\alpha$ such that $\alpha\cap\Lambda=\emptyset$; and $S:= M_{j-1}\cup\alpha$ is an admissible subset of $M$, which is clearly very simple (see Definition \ref{def:simple}). We use Lemma \ref{lem:path-periods} to extend $F_{j-1}$ to $S$ as a generalized $\Sgot$-immersion. By Proposition \ref{pro:Mergelyan-S}, we may approximate $F_{j-1}$ in the $\Cscr^1(M_{j-1}\cup\alpha)$-topology by nonflat $\Sgot$-immersions on a small compact tubular neighborhood $M_j'\Subset M_j$ of $M_{j-1}\cup\alpha$ having a contact of order $k$ with $F$ at all points of $\Lambda_j$. Since $\chi(M_j\setminus\mathring M_j')=0$, this reduces the proof to the previous case and hence concludes the recursive construction of the sequence $\{F_j\}_{j\in\n}$.

Finally, if the number $\epsilon_j>0$ is chosen sufficiently small at each step in the recursive construction, properties {\rm(i$_{j}$)}, {\rm(ii$_{j}$)}, and {\rm (iii$_{j}$)} ensure that the sequence $\{F_j\}_{j\in\n}$ converges uniformly on compacta in $M$ to an $\Sgot$-immersion
\[
      \wt F:=\lim_{j\to\infty}F_j\colon M\to\c^n,
\]
which is as close as desired to $F$ uniformly on $K$, is injective if $F|_{\Lambda}$ is injective, and such that $\wt F-F$ has a zero of multiplicity $k$ at all points of $\Lambda$.


\subsection{Proof of assertion {\rm (I)}}\label{ss:(I)}

Suppose that the assumptions in assertion {\rm (I)} hold. Fix a point $p_0\in \mathring K\setminus \Lambda$. We shall now construct a sequence of $\Sgot$-immersions $F_j\colon M_j\to\c^n$ of class $\Ascr^1(M_j)$, $j\in\n$, satisfying conditions {\rm (i$_j$)}--{\rm (iii$_j$)} above and also
\begin{enumerate}[\rm (i)$_j$]
\item[\rm (iv$_j$)] $\dist_{F_j}(p_0,bM_j)>j$ for all $j\in\n$.
\end{enumerate}

Observe that $F_0=F|_{M_0}$ meets {\rm (iv$_0$)} since it is an immersion and $p_0\in \mathring K$. For the inductive step assume that we already have $F_{j-1}$ satisfying {\rm (i$_j$)}--{\rm (iv$_j$)} for some $j\in\n$ and, reasoning as above, construct an $\Sgot$-immersion $F_j'\colon M_j\to\c^n$ meeting {\rm (i$_j$)}, {\rm (ii$_j$)}, and {\rm (iii$_j$)}. Let $M_j'\subset \mathring M_j$ be a smoothly bounded compact domain which is Runge and a strong deformation retract of $M_j$ and contains $M_{j-1}\cup\Lambda_j$ in its relative interior. Then, Lemma \ref{lem:complete-S} applied to the data
\[
	M=M_j,\quad K=M_j',\quad \Lambda=\Lambda_j,\quad k,\quad \tau=j, 
	\quad\text{and}	\quad F=F_j'|_{M_j'},
\]
gives an $\Sgot$-immersion $F_j\colon M_j\to\c^n$ of class $\Ascr^1(M_j)$ meeting {\rm (ii$_j$)}, {\rm (iv$_j$)}, and also {\rm (i$_j$)} provided that the approximation of $F_j'$ by $F_j$ on $M_j'$ is close enough; Theorem \ref{th:gp-S} enables us to assume that $F_j$ also meets {\rm (iii$_j$)}. This closes the induction and concludes the construction of the sequence $\{F_j\}_{j\in\n}$ with the desired properties.

As above, if the number $\epsilon_j>0$ is chosen sufficiently small at each step in the recursive construction, properties {\rm(i$_{j}$)}-{\rm(iii$_{j}$)} ensure that the sequence $\{F_j\}_{j\in\n}$ converges uniformly on campacta in $M$ to an $\Sgot$-immersion $\wt F:=\lim_{j\to\infty}F_j\colon M\to\c^n$ which is as close as desired to $F$ uniformly on $K$, is injective if $F$ is injective, and such that $\wt F-F$ has a zero of multiplicity $k$ at all points of $\Lambda$. In addition, property {\rm(iv$_{j}$)} ensures that 
\[
	\lim_{j\to\infty}\dist_{\wt F}(p_0,bM_j)=+\infty
\]
whenever the number $\epsilon_j>0$ is chosen small enough at each step in the recursive process.
This implies that $\wt F$ is complete and concludes the proof of assertion {\rm (I)}.


\subsection{Proof of assertion {\rm (II)}} \label{ss:(II)} 

Suppose that the assumptions in assertion {\rm (II)} hold. Observe that $F|_\Lambda\colon\Lambda\to\c^n$ is a proper map if, and only  if, $(F|_\Lambda)^{-1}(C)$ is finite for any compact set $C\subset\c^n$, or, equivalently, if either the closed discrete set $\Lambda$ is finite or for some (and hence for any) ordering $\Lambda=\{p_1,p_2,p_3,\ldots\}$ of $\Lambda$, the sequence $\{F(p_1),F(p_2),F(p_3),\ldots\}$ is divergent in $\c^n$. Since we are assuming that $\Lambda$ is infinite, there is $j_0\in \n$ such that 
\begin{equation}\label{eq:Gp=no0}
F(p)\neq 0\quad \text{for all $p\in\Lambda\setminus \Lambda_{j_0}$}. 
\end{equation}

In a first step we construct for each $j\in\{0,\ldots, j_0 \}$ an $\Sgot$-immersion $F_j\colon M_j\to\c^n$ of class $\Ascr^1(M_j)$ satisfying conditions {\rm (i$_j$)}--{\rm (iii$_j$)} above; we reason as in Subsec.\ \ref{ss:1th}.  Now, up to a small deformation of $M_{j_0}$ if necessary, we may assume without loss of generality that $F_{j_0}$ does not vanish anywhere on $bM_{j_0}$, and hence there exists $\rho_{j_0}>0$ such that
\begin{equation}\label{eq:Fj0}
|F_{j_0}(p)|_{\infty}> \rho_{j_0}>0\quad \text{for all $p\in bM_{j_0}$}.
\end{equation}
Set
\begin{equation}\label{eq:rhoj}
\rho_j:= \min\{|F(p)|_{\infty}\colon p\in\Lambda_j\setminus\Lambda_{j-1} \} \quad \text{for all $j\geq j_0+1$.}
\end{equation}
Recall that $\Lambda_j\setminus\Lambda_{j-1}\neq\emptyset$ for all $j\in\n$. In view of \eqref{eq:Gp=no0} and \eqref{eq:Fj0} we have that $\rho_j>0$ for all $j\geq j_0$. Moreover, since $F|_\Lambda$ is proper then
\begin{equation}\label{eq:rhotoinfty}
      \lim\limits_{j\to+\infty}\rho_j =+\infty.
\end{equation}

In a second step, we shall construct a sequence of $\Sgot$-immersions $F_j\colon M_j\to\c^n$ of class $\Ascr^1(M_j)$, for $j\geq j_0+1$, enjoying conditions {\rm (i$_j$)}--{\rm (iii$_j$)} and also
\begin{enumerate}[\rm ({v$_j$}.1)]
	\item $|F_j(p)|_{\infty}>\frac12\min\{ \rho_{j-1}, \rho_j \}$ for all $p\in M_j\setminus \mathring M_{j-1}$, and
	\item $|F_j(p)|_\infty>\rho_j$ for all $p\in bM_j$.
\end{enumerate}

We proceed in an inductive way. The basis of the induction is accomplished by $F_{j_0}$; recall that it meets {\rm (i$_{j_0}$)}--{\rm (iii$_{j_0}$)} whereas property {\rm (v$_{j_0}$.1)} is vacuous and property {\rm (v$_{j_0}$.2) follows from \eqref{eq:Fj0}. For the inductive step, assume that we already have $F_{j-1}\colon M_{j-1}\to\c^n$ for some $j\geq j_0+1$ satisfying {\rm (i$_{j-1}$)}--{\rm (iii$_{j-1}$)}, {\rm (v$_{j-1}$.1)}, and {\rm (v$_{j-1}$.2) and let us construct an $\Sgot$-immersion $F_j\colon M_j\to\c^n$ of class $\Ascr^1(M_j)$ with the corresponding requirements.
		
By \eqref{eq:rhoj} and up to a shrinking of the set $\Omega_p$ if necessary, we may assume that
\begin{equation}\label{eq:Gq}
	|F(q)|_{\infty}>\frac{\rho_j}2 \quad \text{for all points $q$ in $\Omega^j:=\bigcup_{p\in\Lambda_j\setminus\Lambda_{j-1}}\Omega_p$}\neq\emptyset.
\end{equation}
Next, choose a smooth Jordan arc $C_p$ for each $p\in\Lambda_j\setminus\Lambda_{j-1}$ with the initial point in $bM_{j-1}$, the final point in $b\Omega_p$, and otherwise disjoint from $M_{j-1}\cup\Omega^j$, and such that
\[
    S':=M_{j-1}\cup \Omega^j\cup \big( \bigcup_{p\in \Lambda_j\setminus\Lambda_{j-1}} C_p\big)
\]
is a very simple admissible subset of $M_j$; in particular $C_p\cap C_q=\emptyset$ if $p\neq q$.
If $\chi(M_j\setminus\mathring M_{j-1})=-1$ we then also choose another smooth Jordan arc $\alpha\subset\mathring M_j$ with its two endpoints in $bM_{j-1}$ and otherwise disjoint from $S'$ such that 
$S'\cup\alpha$ 
is admissible and a strong deformation retract of $M_j$. If $\chi(M_j\setminus\mathring M_{j-1})=0$ we set $\alpha:=\emptyset$. In any case, the set
\[
	S:=S'\cup\alpha\subset \mathring M_j
\]
is admissible in $M$ and a strong deformation retract of $M_j$.
Set
\[
	C:=\alpha\cup\Big(\bigcup_{p\in \Lambda_j\setminus\Lambda_{j-1}} C_p\Big),
\]
and observe that $S=(M_{j-1}\cup\Omega^j)\cup C$. 
Consider a generalized $\Sgot$-immersion $\wt F_j\colon S\to\c^n$ of class $\Ascr^1(S)$ 
such that:
\begin{enumerate}[\rm (A.1)]
\item[\rm (A.1)] $\wt F_j|_{M_{j-1}}=F_{j-1}$.
\item[\rm (A.2)] $\wt F_j|_{\Omega^j}=F|_{\Omega^j}$.
\item[\rm (A.3)] $|\wt F_j(q)|_{\infty}>\frac12\min\{\rho_{j-1},\rho_j \}$ for all $q\in C$. 
\end{enumerate}
To ensure {\rm (A.3)} we use Lemma \ref{lem:path-periods}; take into account {\rm (v$_{j-1}$.2)} and \eqref{eq:Gq}. Thus, {\rm (v$_{j-1}$.2)}, \eqref{eq:Gq}, and {\rm (A.3)} guarantee that 
\begin{enumerate}[\rm (A.1)]
\item[ \rm (A.4)] $|\wt F_j(p)|_{\infty}>\frac12\min\{\rho_{j-1},\rho_j \}>0$ for all $p\in S\setminus \mathring M_{j-1}=(bM_{j-1})\cup\Omega^j\cup C$.
\end{enumerate}

Since $S\subset \mathring M_j$ is Runge and a strong deformation retract of $M_j$, Proposition \ref{pro:Mergelyan-S} applied to the data
\[
    M=M_j, \quad S, \quad \Lambda=\Lambda_j,\quad k,\quad \text{and}\quad F=\wt F_j
\]
gives a nonflat $\Sgot$-immersion $\wh F_j\colon M_j\to\c^n$ of class $\Ascr^1(M_j)$ such that
\begin{enumerate}[\rm (B.1)]
	\item[\rm (B.1)] $\wh F_j$ is as close as desired to $\wt F_j$ in the $\Cscr^1(S)$-topology.
	\item[\rm (B.2)] $\wh F_j -\wt F_j$ has a zero of multiplicity $k\in\n$ at every point $p\in\Lambda_j$.
\end{enumerate}

If the approximation in {\rm (B.1)} is close enough then, in view of {\rm (A.4)}, there exists a small compact neighborhood $N$ of $S$ in $\mathring M_j$, being a smoothly bounded compact domain and a strong deformation retract of $M_j$, and such that
\begin{enumerate}[\rm (B.1)]
	\item[\rm (B.3)] $|\wh F_j(p)|_{\infty}>\frac12\min\{\rho_{j-1},\rho_j \}>0 \quad 
      \text{for all $p\in N\setminus \mathring M_{j-1}$}.$
\end{enumerate}
Notice that $\Lambda\cap (M_j\setminus\mathring N)=\emptyset$ and hence we may apply Lemma \ref{lem:proper-S} to the data
\[
      M=M_j,\quad K=N,\quad \Lambda=\Lambda_j,\quad F=\wh F_j,\quad 
	\rho=\frac12\min\{\rho_{j-1},\rho_j \}, \quad \tau =\rho_j,\quad k,
\]
obtaining an $\Sgot$-immersion $F_j\colon M_j\to\c^n$ of class $\Ascr^1(M_j)$ such that:
\begin{enumerate}[\rm (C.1)]
	\item $F_j$ is as close as desired to $\wh F_j$ in the $\Cscr^1(N)$-topology.
	\item[\rm (C.2)]  $F_j-\wh F_j$ has a zero of multiplicity $k$ at every point $p\in\Lambda_j\subset\mathring N$.
	\item[\rm (C.3)] $|F_j(p)|_\infty>\frac12\min\{\rho_{j-1},\rho_j \}$ for all $p\in M_j\setminus \mathring N$.
	\item[\rm (C.4)] $|F_j(p)|_\infty>\rho_j$ for all $p\in bM_j$.
\end{enumerate}

We claim that, if the approximations in {\rm (B.1)} and {\rm (C.1)} are close enough, the $\Sgot$-immersion $F_j\colon M_j\to\c^n$ satisfies properties {\rm (i$_j$)}--{\rm (iii$_j$)}, {\rm (v$_j$.1)}, and {\rm (v$_j$.2)}. Indeed, {\rm (A.1)} ensures {\rm (i$_j$)}; properties {\rm (A.2)}, {\rm (B.2)}, and {\rm (C.2)} guarantee {\rm (ii$_j$)}; condition {\rm (v$_j$.1)} follows from {\rm (A.4)}, {\rm (B.3)}, and {\rm (C.3)}; and condition {\rm (v$_j$.2)} is implied by {\rm (C.4)}.
Finally, if $F|_{\Lambda}$ is injective then, by Theorem \ref{th:gp-S}, we may assume without loss of generality that $F_j$ is an embedding. This closes the inductive step and concludes the recursive construction of the sequence $\{F_j\}_{j\ge j_0+1}$ meeting the desired requirements.

As above, choosing the number $\epsilon_j>0$ $(j\in\n)$ small enough at each step in the construction, properties {\rm(i$_{j}$)}-{\rm(iii$_{j}$)} ensure that the sequence $\{F_j\}_{j\in\n}$ converges uniformly on compact subsets of $M$ to an $\Sgot$-immersion $\wt F:=\lim_{j\to\infty}F_j\colon M\to\c^n$ which is as close as desired to $F$ uniformly on $K$, is injective if $F|_{\Lambda}$ is injective, and such that $\wt F-F$ has a zero of multiplicity $k$ at all points of $\Lambda$. 
Furthermore, properties {\rm (v$_j$.1)} and \eqref{eq:rhotoinfty} imply that $\wt F$ is a proper map. Indeed, take a number $R>0$ and a sequence $\{q_m\}_{m\in\n}$ that diverges on $M$, and let us check that there is $m_0\in\n$ such that $|\wt F(q_m)|_{\infty}>R$ for all $m\ge m_0$. 
Indeed, set
\[
	\varepsilon:= \sum_{j\ge 1} \epsilon_j <+\infty
\]
and observe that, by 
properties {\rm (i$_j$)},
\begin{equation}\label{eq:wF-Fj}
	\|\wt F-F_j\|_{1,M_j}  <\varepsilon\quad \text{for all $j\in\z_+$}.
\end{equation}
On the other hand, in view of \eqref{eq:rhotoinfty} there is an integer $j_1\ge j_0+1$ such that 
\begin{equation}\label{eq:rhoR}
	\rho_{j-1}> 2(R+\varepsilon) \quad \text{for all $j\ge j_1$}.
\end{equation}
Now, since the sequence $\{p_m\}_{m\in \n}$ diverges on $M$ and $\{M_j\}_{j\in\n}$ is an exhaustion of $M$, there is $m_0\in\n$ such that 
\[
	p_m\in M\setminus M_{j_1}\quad \text{for all $m\ge m_0$}.
\]
Thus, for any $m\ge m_0$ there is an integer $j_m\ge j_1$ such that $q_m\in M_{j_m}\setminus \mathring M_{j_m-1}$, and so
\begin{eqnarray*}
	|\wt F(q_m)|_\infty & \ge & |F_{j_m}(q_m)|_\infty - |F_{j_m}(q_m)-\wt F(q_m)|_\infty 
	\\
	& \stackrel{\text{{\rm (v$_{j_m}$.1)}, \eqref{eq:wF-Fj}} }{>} & 
	\frac12 \min\{\rho_{{j_m}-1},\rho_{j_m}\} - \varepsilon
	\;  \stackrel{\eqref{eq:rhoR}}{>} \; R.
\end{eqnarray*}
This proves that $\wt F\colon M\to\c^n$ is a proper map and concludes the proof of Theorem \ref{th:main-intro3}.


\section{Sketch of the proof of Theorem \ref{th:main-intro2} and proof of Theorem \ref{th:main-intro}}\label{sec:maintheoremMS}

In this section we briefly explain how the arguments in Sections \ref{sec:plus} and \ref{sec:maintheorem} which have enabled us to prove Theorem \ref{th:main-intro3} may be adapted in order to guarantee Theorem \ref{th:main-intro2}; we shall leave the obvious details of the proof to the interested reader. Afterward, we will use Theorem \ref{th:main-intro2} to prove Theorem \ref{th:main-intro}.

First of all recall that, as pointed out in Subsec.\ \ref{ss:Oka}, for any integer $n\ge 3$ the punctured null quadric $\Agot_*\subset\c^n$ (see \eqref{eq:nullquadric} and \eqref{eq:punctured}) directing minimal surfaces in $\r^n$ is an Oka manifold and satisfies the assumptions in assertions {\rm (I)} and {\rm (II)} in Theorem \ref{th:main-intro3}. 
 Thus, Theorem \ref{th:MtT} and Lemma \ref{lem:MtFix} hold for $\Sgot=\Agot$.

The first step in the proof of Theorem \ref{th:main-intro2} consists of providing an analogous of Proposition \ref{pro:Mergelyan-S} for {\em generalized conformal minimal immersions} in the sense of \cite[Definition 5.2]{AlarconForstnericLopez2016MZ}. In particular we need to show that if we are given $M$, $S$, and $\Lambda$ as in Proposition \ref{pro:Mergelyan-S} then, for any integer $k\in\z_+$, every generalized conformal minimal immersion $X\colon S\to\r^n$ $(n\ge 3)$ 
may be approximated in the $\Cscr^1(S)$-topology by conformal minimal immersions $\wt X\colon M\to\r^n$ of class $\Cscr^1(M)$ such that $\wt X$ and $X$ have a contact of order $k$ at every point in $\Lambda$ and the flux map $\Flux_{\wt X}$ equals $\Flux_X$ everywhere in the first homology group $H_1(M;\z)$. To do that we reason as in the proof of Proposition \ref{pro:Mergelyan-S} but working with the map $f:=\di X/\theta\colon S\to\Agot_*$. Since $f\theta$ does not need to be exact (only its real part does) we replace conditions {\rm (a)} and {\rm (b)} in the proof of the proposition by the following ones:
\begin{itemize}
\item $(\wt f-f)\theta$ is exact on $S$.
\item $X(p_0)+2\int_{C_p}\Re(\wt f\theta)=X(p_0)+2\int_{C_p} \Re(f\theta)=X(p)$ for all $p\in\Lambda$.
\end{itemize}
It can then be easily seen that the conformal minimal immersion $\wt X\colon M\to\r^n$ of class $\Cscr^1(M)$ given by
\[
	\wt X(p):=X(p)+2\int_{C_p}\Re(\wt f\theta),\quad p\in M,
\]
is well defined and enjoys the desired properties.

In a second step and following the same spirit, we need to furnish a general position theorem, a completeness lemma, and a properness lemma for conformal minimal immersions of class $\Cscr^1$ on a compact bordered Riemann surface, which are analogues of Theorem \ref{th:gp-S}, Lemma \ref{lem:complete-S}, and Lemma \ref{lem:proper-S}, respectively. In this case the general position of conformal minimal surfaces is embedded in $\r^n$ for all integers $n\ge 5$; to adapt the proof of Theorem \ref{th:gp-S} to the minimal surfaces framework we combine the argument in \cite[Proof of Theorem 1.1]{AlarconForstnericLopez2016MZ} with the new ideas in Subsec.\ \ref{ss:gp} which allow us to ensure the interpolation condition. Likewise, the analogues of Lemmas \ref{lem:complete-S} and \ref{lem:proper-S} for conformal minimal surfaces can be proved by adapting the proofs of the cited lemmas in Subsec.\ \ref{ss:complete} and \ref{ss:proper}, respectively; the required modifications follow the pattern described in the previous paragraph: at each step in the proofs we ensure that the real part of the $1$-forms is exact and that the periods of the imaginary part agree with the flux map of the initial conformal minimal immersion. Furthermore, obviously, we are allowed to use only the real part in order to ensure the increasing of the intrinsic diameter of the surface to achieve completeness (cf.\ Lemma \ref{lem:complete-S} {\rm (II)}) and the increasing of the $|\cdot|_\infty$-norm near the boundary to guarantee properness (cf.\ Lemma \ref{lem:proper-S} {\rm (II)} and {\rm (III)}). For the former we just replace condition {\rm (c)} in the proof of Lemma \ref{lem:complete-S} (which determines an extrinsic bound) by the following one:
\begin{itemize}
\item $|2\int_\alpha \Re(h\theta)| >\tau$ for all arcs $\alpha\subset S$ with initial point $p_0$ and final point in $ L$.
\end{itemize}
For the latter the adaptation is done straightforwardly since all the bounds are of the same nature, namely, extrinsic.

Finally, granted the analogues for conformal minimal surfaces in $\r^n$ of Proposition \ref{pro:Mergelyan-S}, Theorem \ref{th:gp-S}, Lemma \ref{lem:complete-S}, and Lemma \ref{lem:proper-S}, the proof of Theorem \ref{th:main-intro2} follows word by word, up to trivial modifications similar to the ones discussed in the previous paragraphs, the one of Theorem \ref{th:main-intro3} in Section \ref{sec:maintheorem}. It is perhaps worth to point out that in the noncritical case in the recursive construction (see Subsec.\ \ref{ss:1th}) we now have to extend a conformal minimal immersion of class $\Cscr^1(M_{j-1})$ to a generalized conformal minimal immersion on the admissible set $S=M_{j-1}\cup\alpha\subset \mathring M_j$ whose flux map equals $\pgot$ for every closed curve in $S$ (here $\pgot\colon H_1(M;\z)\to\r^n$ denotes the group homomorphism given in the statement of Theorem \ref{th:main-intro2} whereas $M_{j-1}$, $\alpha$, and $M_j$ are as in Subsec.\ \ref{ss:1th}); this can be easily done as in \cite[Proof of Theorem 1.2]{AlarconForstnericLopez2016MZ}. This concludes the sketch of the proof of Theorem \ref{th:main-intro2}; as we announced at the very beginning of this section, we leave the details to the interested reader.

%
%

To finish the paper we show how Theorem \ref{th:main-intro2} can be used in order to prove the following extension to Theorem \ref{th:main-intro} in the introduction.

\begin{corollary}\label{co:}
Let $M$ be an open Riemann surface and $\Lambda\subset M$ be a closed discrete subset. Consider also an integer $n\ge 3$ and maps $\Zgot\colon \Lambda\to\r^n$ and $G\colon \Lambda\to Q_{n-2}=\{[z_1\colon\cdots\colon z_n]\in\cp^{n-1}\colon z_1^2+\cdots+z_n^2=0\}\subset\cp^{n-1}$. Then there is a conformal minimal immersion $\wt X\colon M\to\r^n$ satisfying $\wt X|_\Lambda = \Zgot$ and whose generalized Gauss map $G_{\wt X}\colon M\to \cp^{n-1}$ equals $G$ on $\Lambda$. 
\end{corollary}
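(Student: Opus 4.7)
The plan is to reduce Corollary \ref{co:} to Theorem \ref{th:main-intro2} applied with jet order $k=1$, after encoding both the prescribed value and the prescribed Gauss map as the $1$-jet of a locally defined model immersion at each point of $\Lambda$. The key observation is that if two conformal minimal immersions $\wt X,X\colon M\to\r^n$ have a contact of order $1$ at a point $p$, then $\wt X(p)=X(p)$ and the holomorphic $1$-form $\di(\wt X-X)$ vanishes at $p$, so $\di\wt X(p)=\di X(p)$ as $\c^n$-valued $1$-forms and consequently $G_{\wt X}(p)=G_X(p)$. Thus prescribing the value and Gauss map at $p$ is exactly prescribing the $1$-jet at $p$ of a conformal minimal immersion.

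With this in mind, I would first build a local model immersion at each point. For $p\in\Lambda$, pick a holomorphic chart $z_p\colon U_p\to\c$ with $z_p(p)=0$ (making the $U_p$ pairwise disjoint), and lift $G(p)\in Q_{n-2}$ to some $a_p\in\Agot_*\subset\c^n$; this lift exists because $Q_{n-2}$ is precisely the image of $\Agot_*$ under the projection to $\cp^{n-1}$. Then
\[
    X_p(q):=\Zgot(p)+2\,\Re\bigl(a_p\,z_p(q)\bigr),\quad q\in U_p,
\]
is a conformal minimal immersion: the $1$-form $\di X_p=a_p\,dz_p$ is nowhere zero and satisfies the nullity condition \eqref{eq:nullity} since $a_p\in\Agot$. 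By construction $X_p(p)=\Zgot(p)$ and $G_{X_p}(p)=[a_{p,1}\colon\cdots\colon a_{p,n}]=G(p)$.

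Next, I would assemble the data required by Theorem \ref{th:main-intro2}. Shrink each $U_p$ to a smoothly bounded compact disk $\Omega_p\Subset U_p$ containing $p$, with the $\Omega_p$ still pairwise disjoint, and pick a smoothly bounded compact disk $K\subset M\setminus\bigcup_{p\in\Lambda}\Omega_p$ that does not disconnect $M$ (since $\Lambda$ is closed discrete, such a $K$ exists and $M\setminus K$ has no relatively compact connected components). On $\Omega:=\bigcup_{p\in\Lambda}\Omega_p$ set $X:=X_p$ on each $\Omega_p$, and on $K$ let $X$ be any conformal minimal immersion of class $\Cscr^1(K)$. Choose the flux homomorphism to be $\pgot\equiv 0$; since $K$ is a disk, every closed curve in $K$ is null-homologous in $M$, so the compatibility requirement $\Flux_X(\gamma)=\pgot(\gamma)$ on closed curves $\gamma\subset K$ holds trivially.

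Finally, applying Theorem \ref{th:main-intro2} to $X$, $\pgot$, and $k=1$ produces a conformal minimal immersion $\wt X\colon M\to\r^n$ having contact of order $1$ with $X$ at every point of $\Lambda$. By the opening observation, $\wt X|_\Lambda=X|_\Lambda=\Zgot$ and $G_{\wt X}(p)=G_{X_p}(p)=G(p)$ for all $p\in\Lambda$, which is the conclusion. There is essentially no obstacle here beyond Theorem \ref{th:main-intro2} itself; the only thing that must be checked is that the Weierstrass representation affords a conformal minimal immersion prescribing an arbitrary pair (point in $\r^n$, point in $Q_{n-2}$) as a $1$-jet, and the explicit linear formula above makes this immediate.
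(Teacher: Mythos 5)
Your proof is correct and follows essentially the same strategy as the paper's: extend the prescribed data $(\Zgot,G)$ to a conformal minimal immersion of class $\Cscr^1$ on a union $K\cup\Omega$ of a disk $K$ and pairwise disjoint neighborhoods $\Omega_p$ of the points of $\Lambda$, and then apply Theorem~\ref{th:main-intro2} with $k=1$, using the observation that $1$-jet contact for harmonic maps means agreement of both the value and $\di X$ at the point. The only difference is cosmetic: you spell out the ``suitable planar disk'' local models $X_p(q)=\Zgot(p)+2\Re(a_p z_p(q))$ that the paper leaves implicit.
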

\begin{proof}
For each $p\in\Lambda$ let $\Omega_p$ be a smoothly bounded simply-connected compact neighborhood of $p$ in $M$, and assume that $\Omega_p\cap\Omega_q=\emptyset$ whenever that $p\neq q\in\Lambda$. Set $\Omega:=\bigcup_{p\in\Lambda}\Omega_p$ and let $X\colon \Omega\to\r^n$ be any conformal minimal immersion  of class $\Cscr^1(\Omega)$ such that $X|_\Lambda=\Zgot$ and the generalized Gauss map $G_X|_\Lambda =G$. (Such always exists; we may for instance choose $X|_{\Omega_p}$ to be a suitable planar disk for each $p\in\Lambda$). Also fix a smoothly bounded simply-connected compact domain $K\subset M\setminus\Lambda$, up to shrinking the sets $\Omega_p$ if necessary assume that $K\subset M\setminus\Omega$, and extend $X$ to $\Omega\cup K\to\r^n$ such that $X|_K\colon K\to\r^n$ is any conformal minimal immersion of class $\Cscr^1(K\cup\Omega)$. Applying Theorem \ref{th:main-intro2} to these data, any group homomorphism $H_1(M;\z)\to\r^n$, and the integer $k=1$, we obtain a conformal minimal immersion $\wt X\colon M\to\r^n$ which has a contact of order $1$ with $X|_\Omega$ at every point in $\Lambda$. Thus, $\wt X|_\Lambda=X|_\Lambda=\Zgot$ and the generalized Gauss map $G_{\wt X}|_{\Lambda}=[\di \wt X]|_\Lambda=[\di X]|_\Lambda=G_X|_\Lambda=G$. This concludes the proof.
\end{proof}


\subsection*{Acknowledgements}
Research partially supported by the MINECO/FEDER grant no. MTM2014-52368-P and MTM2017-89677-P, Spain. 

The authors wish to express their gratitude to Franc Forstneri\v c and Francisco J. L\'opez for helpful discussions which led to improvement of the paper.




\bigskip

\noindent Antonio Alarc\'{o}n

\noindent Departamento de Geometr\'{\i}a y Topolog\'{\i}a e Instituto de Matem\'aticas (IEMath-GR), Universidad de Granada, Campus de Fuentenueva s/n, E--18071 Granada, Spain.

\noindent  e-mail: {\tt alarcon@ugr.es}

\bigskip

\noindent Ildefonso Castro-Infantes

\noindent Departamento de Geometr\'{\i}a y Topolog\'{\i}a, Universidad de Granada, Campus de Fuentenueva s/n, E--18071 Granada, Spain.

\noindent  e-mail: {\tt icastroinfantes@ugr.es}

\end{document}